\newcommand{\toukou}[1]{\ifx\TOUKOU\undefined\else{#1}\fi}%
\newcommand{\toukoudel}[1]{\ifx\TOUKOU\undefined{#1}\else\fi}%
\newcommand{\toukouchange}[2]{\ifx\TOUKOU\undefined{#1}\else{#2}\fi}%
\theoremstyle{theorem}
\newtheorem{theorem}{Theorem}[section]
\newtheorem{proposition}[theorem]{Proposition}%[section]
\newtheorem{lemma}[theorem]{Lemma}%[section]
\newtheorem{corollary}[theorem]{Corollary}%[section]
\newtheorem{fact}[theorem]{Fact}%[section]
\theoremstyle{definition}
\newtheorem{definition}[theorem]{Definition}%[section]
\newtheorem{example}[theorem]{Example}%[section]
\theoremstyle{remark}
\newtheorem{remark}[theorem]{Remark}%[section]
\newtheorem*{acknowledgements}{Acknowledgements}
\newcommand{\inner}[2]{\left\langle{#1},{#2}\right\rangle}
\newcommand{\rank}{\operatorname{rank}}
\newcommand{\hess}{\operatorname{Hess}}
\newcommand{\R}{\bm{R}}
\newcommand{\e}{\bm{e}}
\newcommand{\n}{\bm{n}}
\newcommand{\F}{\mathcal{F}}
\newcommand{\vphi}{\varphi}
\newcommand{\eps}{\varepsilon}
\newcommand{\til}{\tilde}
\newcommand{\wtil}{\widetilde}
\renewcommand{\u}{\bm{u}}
\newcommand{\x}{\bm{x}}
\newcommand{\pmt}[1]{\begin{pmatrix}#1\end{pmatrix}}
\numberwithin{equation}{section}
\begin{document}

\title[Principal curvatures of frontals]
{Behavior of principal curvatures of frontals near non-front singular points and their application}

\toukouchange{
\author[K. Saji]{Kentaro Saji}
\address[K. Saji]{Department of Mathematics, Graduate School of Science, 
Kobe University, Rokko 1-1, Kobe 657-8501, Japan}
\email{saji@math.kobe-u.ac.jp}
\author[K. Teramoto]{Keisuke Teramoto}
\address[K. Teramoto]{Institute of Mathematics for Industry, Kyushu University, %
744 Motooka, Fukuoka 819-0395, Japan}
\email{k-teramoto@imi.kyushu-u.ac.jp}
}
{
\author{\name{Kentaro \surname{Saji}}}
\address{Department of Mathematics, Graduate School of Science, Kobe University, %
Rokko 1-1, Kobe 657-8501, Japan \email{saji@math.kobe-u.ac.jp}}
\author{\name{Keisuke \surname{Teramoto}}}
\address{Institute of Mathematics for Industry, Kyushu University, %
744 Motooka, Fukuoka 819-0395, Japan \email{k-teramoto@imi.kyushu-u.ac.jp}}
}

\toukouchange{\subjclass[2010]{57R45, 53A05, 53A55}}
{\classification{57R45; 53A05; 53A55}}

\keywords{singularity, frontal, principal curvature, Ribaucour transformation}
\thanks{The authors were partially supported by JSPS KAKENHI Grant Number JP18K03301 and JP19K14533, and
CAPES/JSPS Bilateral Joint Research Project}
\date{\today}

\begin{abstract}
We investigate behavior of principal curvatures 
and principal vectors near a non-degenerate singular point 
of the first kind of frontals.
As an application, 
we extend the notion of
Ribaucour transformations to frontals with singular points.
\end{abstract}

\maketitle

%%%%%SECTION 1%%%%%
\section{Introduction}
In this paper, we study behavior of principal curvature
near a singular point
of frontal surfaces which is not a front.
A frontal is a class of surfaces with singular points,
and it is well known that surfaces with 
constant curvature are in this class.
In these decades,
there are several studies of frontals 
from the viewpoint of differential geometry 
and various geometric invariants at singular points 
are introduced 
\cite{maxface,framed,intrinsic-frontal,honda-saji,martins-saji,msuy,front}. 
It is known that a cuspidal edge 
($\mathcal{A}$-equivalent to the germ $(u,v)\mapsto(u,v^2,v^3)$ 
at the origin)
and a swallowtail are generic singularities of fronts in $3$-space. 
On the other hand, a cuspidal cross cap and a 
$5/2$-cuspidal edge are typical singularities of frontals
which are not front. 
Boundedness of Gaussian and mean curvature of frontals
at certain singular points are studied by
in terms of
geometric invariants \cite{honda-saji,msuy,front}.

Behavior of principal curvatures of fronts 
are studied in \cite{tera0,tera}.
Since singularities of frontals which are not front one
are a kind of degenerate 
singularities of fronts, it is natural to expect
principal curvatures differently behave.
We divide non-degenerate singular points of 
frontals which are not a front one into two classes: a singular point of
{\it $k$-non-front} and a singular point of {\it pure-frontal}. 
Typical examples of singular points of $k$-non-front are
cuspidal cross caps $(k=1)$ and cuspidal $S_{k-1}$ singular points,
and that of a singular point of pure-frontal is
a $5/2$-cuspidal edge.
Using geometric invariants,
we give a necessary and sufficient condition 
that the principal curvatures can be extended as 
$C^\infty$ functions
near a singular point of pure-frontal (Theorem \ref{thm:principal1}).
On the other hand, we show
around a singular point of $2k$-non-front,
one principal curvature can be extended as a continuous function.
We also show around a singular point of $(2k+1)$-non-front, 
on the singular curve $\gamma:(-\eps,\eps)\to U$ $(\gamma(0)=p)$,
one principal curvature can be extended as 
a continuous function across $\gamma((-\eps,0))$
and
the other principal curvature can be extended as 
a continuous function across $\gamma((0,\eps))$ 
(Theorem \ref{thm:principal-k-frontal}).

Moreover, we consider umbilic points of 
a frontal singular point which is not a front one 
(Sections \ref{sec:umbpure} and \ref{sec:umbknon}).
Furthermore, we study behavior of principal vector field,
and extend this notion to frontals as `curvature line frame' (Section
\ref{sec:curvlineframe}).
As an application, by using curvature line frame
we extend the notion of
Ribaucour transformations to frontals which
is given and studied for regular surfaces (Section \ref{sec:rib}).

%%%%%SECTION 2%%%%%
\section{Preliminaries} 
We recall some notions and properties of frontals. 
\subsection{Frontals}
Let $f\colon V\to\R^3$ be a $C^\infty$ map, where $V$ is an open set of $\R^2$. 
Then $f$ is a {\it frontal} if there exists a $C^\infty$ map 
$\nu\colon V\to S^2$ 
such that $\inner{df_q(X)}{\nu(q)}=0$ holds for any $q\in V$ and $X\in T_q V$, 
where $S^2$ is the unit sphere in $\R^3$ and $\inner{\cdot}{\cdot}$ is the 
Euclidean inner product of $\R^3$. 
We call the map $\nu$ a {\it unit normal vector} or the {\it Gauss map} of $f$. 
If a frontal $f$ satisfies that the pair $(f,\nu)\colon V\to\R^3\times S^2$ is an immersion, 
then $f$ is called a {\it front}. 
We fix a frontal $f$. 
A point $p\in V$ is said to be a {\it singular point} of $f$ if $f$ is not an immersion at $p$. 
We denote by $S(f)$ the set of singular points of $f$ (on $V$). 
Let us set a function $\lambda\colon V\to\R$ by 
\begin{equation}\label{eq:lambda}
\lambda(u,v)=\det(f_u,f_v,\nu)(u,v),
\end{equation}
where $(u,v)$ are some coordinates, $(\cdot)_u=\partial/\partial u$ and $(\cdot)_v=\partial/\partial v$. 
We call $\lambda$ the {\it signed area density function} of $f$. 
A non-zero functional multiple of $\lambda$ is called 
an {\it identifier of singularities}.
Taking a singular point $p\in S(f)$ of a frontal $f$, 
$p$ is said to be {\it non-degenerate} 
if $(\hat{\lambda}_u(p),\hat{\lambda}_v(p))\neq(0,0)$,
where $\hat\lambda$ is an identifier of singularities.
We notice that $\rank df_p=1$ if $p\in S(f)$ is non-degenerate.
If $p\in S(f)$ is non-degenerate,
then there exist a neighborhood $U$ of $p$ and a $C^\infty$ regular curve 
$\gamma=\gamma(t)\colon (-\eps,\eps)\to U$ ($\eps>0$) such that 
$\gamma(0)=p$ and $\hat{\lambda}(\gamma(t))=0$ holds. 
This implies that $S(f)$ is locally parametrized by $\gamma$. 
Moreover, there exists a non-zero vector field $\eta$ on $U$ such that 
$df_q(\eta_q)=0$ for any $q\in S(f)\cap U$. 
We call $\gamma$ and $\eta$ a {\it singular curve} and a {\it null vector field}, respectively. 
A non-degenerate singular point $p$ is said to be of the {\it first kind\/}
if $\eta$ is transverse to $\gamma$ at $p$.

Let $f\colon V\to\R^3$ be a frontal, and let $p\in S(f)$ be non-degenerate.
We set $\nu$ a unit normal vector of $f$. 
Let $\gamma(t)$ be a singular curve through $p$ and $\eta$ a null vector field. 
Then we define two functions $\delta$ and $\psi$ by 
\begin{equation}\label{eq:delta}
\delta(t)=\det(\gamma'(t),\eta(t)),\quad
\psi(t)=\det(\hat{\gamma}'(t),\nu(\gamma(t)),\eta\nu(\gamma(t))),
\end{equation}
where $'=d/dt$, $\hat{\gamma}=f\circ\gamma$. 
By definition, $p$ is of the first kind if and only if $\delta(0)\neq0$. 
Moreover, it is known that a frontal $f$ is not a front at a singular point of the first kind 
if and only if $\psi(0)=0$. A singular 
point of the first kind $p$ of a frontal $f$ is 
said to be {\it non-front} 
if $\psi(p)=0$, namely, $f$ is not a front at $p$.
We divide non-front singular points as follows:

\begin{definition}
\begin{enumerate}
\item A singular point of the first kind $p$ of a frontal $f$ 
is said to be a {\it $k$-non-front singular point} $(k\geq1)$ 
if the function $\psi$ as in \eqref{eq:delta} satisfies 
$\psi(0)=\psi'(0)=\cdots=\psi^{(k-1)}(0)=0$ and $\psi^{(k)}(0)\neq0$. 
\item A singular point of the first kind $p$ of 
a frontal $f$ is a {\it pure-frontal singular point} 
if the function $\psi$ vanishes identically along $\gamma(t)$. 
\end{enumerate}
\end{definition}
A cuspidal edge is a front.
Typical examples of singular points of $k$-non-front are cuspidal 
$S_k^\pm$ singularities $(k\geq0)$ which are 
$\mathcal{A}$-equivalent to the germ $(u,v)\mapsto(u,v^2,v^3(u^{k+1}\pm v^2))$ at the origin. 
Two map-germs $f_1,f_2$ are {\it $\mathcal{A}$-equivalent\/}
if they coincide up to coordinate transformations of the
source and the target spaces.
On the other hand, a typical example of a singular point of pure-frontal
is a $5/2$-cuspidal edge 
which is $\mathcal{A}$-equivalent to the germ $(u,v)\mapsto(u,v^2,v^5)$ (see Figure \ref{fig:sing1}). 
For criteria and geometric properties of surfaces with these singularities, see \cite{maxface,honda-koiso-saji,honda-saji,intrinsic-frontal,oset-saji,msuy,saji-csk}.
\begin{figure}[htbp]
  \begin{center}
    \begin{tabular}{c}
          \includegraphics[width=.25\linewidth]{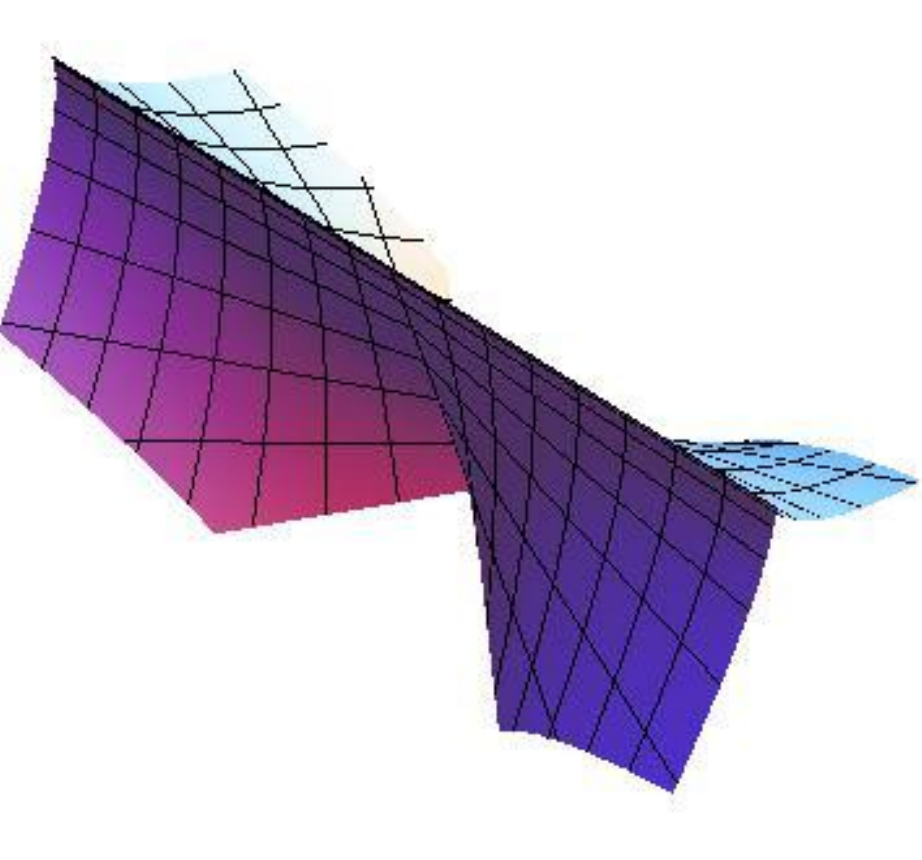}
          \includegraphics[width=.25\linewidth]{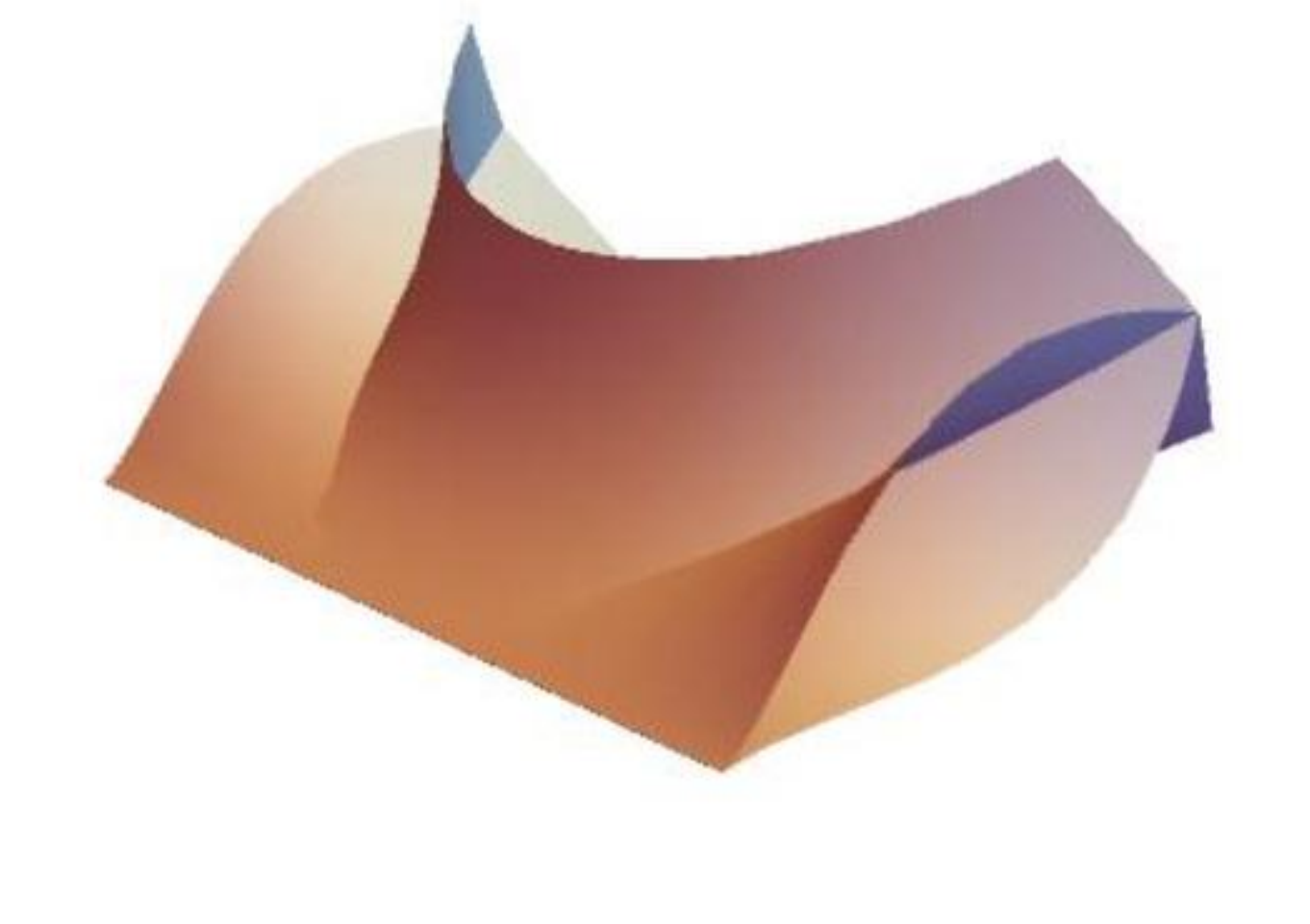}
          \includegraphics[width=.25\linewidth]{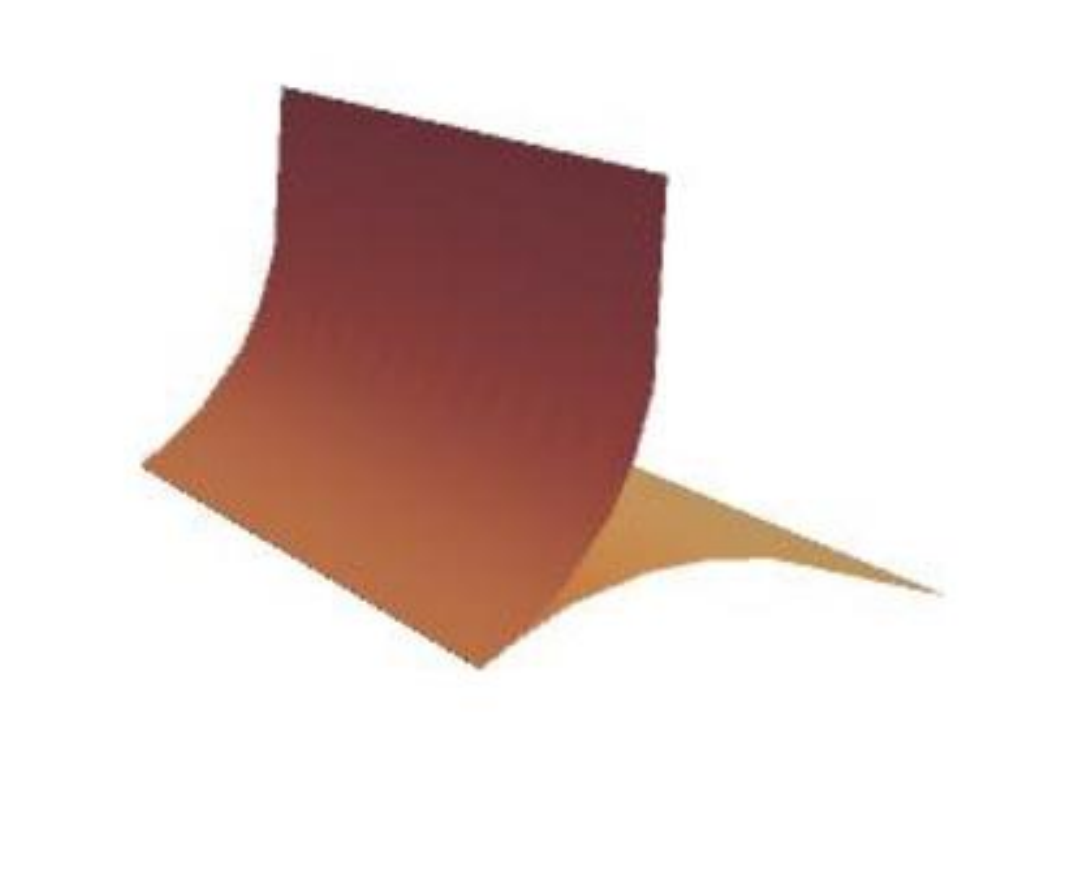}
    \end{tabular}
    \caption{From left to right: cuspidal $S_0$ singularity (cuspidal cross cap), cuspidal $S_1^-$ singularity and $5/2$-cuspidal edge.}
    \label{fig:sing1}
  \end{center}
\end{figure}
Let $f$ be a frontal and $p$ a singular point of the first kind. 
Then there are several differential geometric invariants at $p$.
We introduce here {\it singular curvature} $\kappa_s$,
the {\it limiting normal curvature} $\kappa_\nu$,
the {\it cuspidal curvature} $\kappa_c$ and
the {\it cuspidal torsion} $\kappa_t$,
where the presise definition themselves will not needed 
(See Lemma \ref{prop:invariants}.).
See \cite{front} for $\kappa_s,\kappa_\nu$,
and \cite{martins-saji,msuy} for the others.
We remark although these invariants were defined 
cuspidal edge singular point
it is a singular point of the first kind,
one can easily see these definitions work our case.
For a frontal $f$, the following assertion is known.
\begin{fact}[\cite{msuy}]\label{fact:frontal}
Let $f\colon V\to\R^3$ be a frontal and $p$ 
a singular point of the first kind. 
Then $p$ is a non-front singular point of $f$ 
if and only if $\kappa_c(p)=0$ holds. 
In particular, a singular curve $\gamma(t)$ through 
$p$ consists of pure-frontal singularities 
if and only if $\kappa_c$ vanishes along $\gamma(t)$.
\end{fact}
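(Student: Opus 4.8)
The plan is to evaluate both $\kappa_c$ and the function $\psi$ of \eqref{eq:delta} in an adapted coordinate system along the singular curve and to show that they agree up to a non-vanishing functional factor; since it is already known that a frontal $f$ fails to be a front at a first-kind singular point $\gamma(t)$ exactly when $\psi(t)=0$, the two assertions of the Fact follow at once.

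First I would take coordinates $(u,v)$ centered at $p$ with $S(f)=\{v=0\}$ and $\eta=\partial_v$, so that $\gamma(t)=(t,0)$, $\hat\gamma(t)=f(t,0)$ and $f_v(u,0)\equiv0$. Routine differentiation of the identities $f_v(u,0)\equiv0$, $\inner{\nu}{f_v}\equiv0$, $\inner{\nu}{f_u}\equiv0$ and $|\nu|\equiv1$ gives, along $v=0$, the relations $f_{uv}=0$, $\inner{\nu}{f_{vv}}=0$, $\inner{\nu_v}{f_u}=-\inner{\nu}{f_{uv}}=0$, $\inner{\nu}{\nu_v}=0$ and $\inner{\nu}{f_{vvv}}=-2\inner{\nu_v}{f_{vv}}$. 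Moreover, writing $f_v=v\tilde f$ with $\tilde f(u,0)=f_{vv}(u,0)$, the identifier $\lambda=\det(f_u,f_v,\nu)$ becomes $\lambda=v\det(f_u,\tilde f,\nu)$, so non-degeneracy of the singular point forces $\det(f_u,f_{vv},\nu)(u,0)\neq0$ near $p$. Hence $\{f_u,f_{vv},\nu\}$ is a moving frame along $\gamma$, and since $f_u$ and $f_{vv}$ are orthogonal to $\nu$ there, $f_u\times f_{vv}=\det(f_u,f_{vv},\nu)\,\nu$ along $\gamma$.

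Next, since $\nu_v(u,0)$ is orthogonal to both $f_u(u,0)$ and $\nu(u,0)$ (which are linearly independent along $\gamma$), I would write $\nu_v(u,0)=s(u)\,\nu(u,0)\times f_u(u,0)$ for a smooth function $s$. Substituting into \eqref{eq:delta} gives $\psi(t)=\det(f_u,\nu,\nu_v)(t,0)=-s(t)\,|f_u(t,0)|^2$. On the other hand, in an adapted coordinate system $\kappa_c$ along $\gamma$ is a non-vanishing functional multiple of $\det(f_u,f_{vv},f_{vvv})|_{v=0}$ (the cuspidal-curvature formula of \cite{martins-saji,msuy}; its denominator $|f_u\wedge f_{vv}|^{5/2}$ is non-zero by the previous paragraph). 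Using $f_u\times f_{vv}=\det(f_u,f_{vv},\nu)\,\nu$ and $\inner{\nu}{f_{vvv}}=-2\inner{\nu_v}{f_{vv}}=-2s\,\det(f_u,f_{vv},\nu)$ along $\gamma$, one gets $\det(f_u,f_{vv},f_{vvv})=\det(f_u,f_{vv},\nu)\inner{\nu}{f_{vvv}}=-2s\,\det(f_u,f_{vv},\nu)^2$ there. Thus $\kappa_c(\gamma(t))$ equals $\psi(t)$ multiplied by a non-vanishing function of $t$.

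Consequently $p$ is non-front, i.e. $\psi(0)=0$, if and only if $\kappa_c(p)=0$, and $\kappa_c$ vanishes identically along $\gamma$ if and only if $\psi$ does, i.e. if and only if every point of $\gamma$ is a pure-frontal singular point. The only steps demanding care are importing the exact normalization of $\kappa_c$ from \cite{martins-saji,msuy} and keeping the derivative identities for $\nu$ straight; I do not expect a genuine conceptual obstacle, since after fixing adapted coordinates the whole computation is linear algebra in the frame $\{f_u,f_{vv},\nu\}$ along $\gamma$.
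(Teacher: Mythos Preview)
Your argument is correct: after passing to adapted coordinates you show that $\psi(t)$ and $\det(f_u,f_{vv},f_{vvv})(t,0)$ are both non-vanishing multiples of the same scalar function $s(t)$, and hence $\kappa_c(\gamma(t))$ and $\psi(t)$ vanish simultaneously. The computations you outline (in particular $f_u\times f_{vv}=\det(f_u,f_{vv},\nu)\,\nu$ and $\inner{\nu}{f_{vvv}}=-2\inner{\nu_v}{f_{vv}}$ along $v=0$) are routine and go through as stated.

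Note, however, that the paper does \emph{not} prove this statement at all: it is recorded as a Fact with a reference to \cite{msuy}. What the paper does do, a bit later, is establish in Lemma~\ref{prop:invariants} (via \cite{tera}) that $\kappa_c(u)=2\tilde N(u,0)$ in an orthogonal adapted coordinate system; combined with the Weingarten-type formula of Lemma~\ref{lem:weingarten}, this gives an alternative route to the Fact, since $\psi(t)=\det(f_u,\nu,\nu_v)(t,0)$ is then a non-zero multiple of $\tilde N(t,0)$. Your approach is more self-contained in that it avoids the orthogonal-adapted normalization and works directly with the cuspidal-curvature formula $\kappa_c\propto\det(f_u,f_{vv},f_{vvv})/|f_u\times f_{vv}|^{5/2}$; the only external input you need is that formula from \cite{martins-saji,msuy}, which you flag correctly.
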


One can take a pair of positively oriented vector fields 
$(\xi,\eta)$ on a 
neighborhood $U$ of a singular point of the first kind 
$p$ satisfying that
$\xi$ is tangent to $\gamma$, and
$\eta$ is a null vector field.
We call such a pair $(\xi,\eta)$ an 
{\it adapted pair of vector field} (\cite{martins-saji}). 
On the other hand, 
a local coordinate system $(u,v)$ on $U$
satisfying that
the $u$-axis coincides with the image of singular curve, 
and $(\partial_u,\partial_v)$ is an adapted pair
is said to be {\it adapted\/} (\cite{msuy,front}). 

If a point $p$ is a non-front singular point of a frontal $f$, 
one can take an adapted pair $(\xi,\tilde\eta)$
of vector fields such that
$\inner{\tilde{\eta}\tilde{\eta} f(p)}{\xi f(p)}=\inner{\tilde{\eta}\tilde{\eta}\tilde{\eta} f(p)}{\xi f(p)}=0$.  
Thus there exists a number $l\in \R$ such that $\tilde{\eta}\tilde{\eta}\tilde{\eta} f(p)=l\tilde{\eta}\tilde{\eta} f(p)$. 
Using this null vector field $\tilde{\eta}$ and the number $l$, we set other invariant for a frontal of the first kind.
\begin{align}
\begin{aligned}\label{eq:invariants2}
r_b(p)&=\dfrac{|\xi f|^2\det(\xi f,\tilde{\eta}\tilde{\eta} f,\tilde{\eta}^4 f)}
{|\xi f\times\tilde{\eta}\tilde{\eta}f|^3}(p),\\
r_c(p)&=\dfrac{|\xi f|^{5/2}\det(\xi f,\tilde{\eta}\tilde{\eta} f,3\tilde{\eta}^5 f-10l\tilde{\eta}^4f)}
{|\xi f\times \tilde{\eta}\tilde{\eta}f|^{7/2}}(p),
\end{aligned}
\end{align}
where $\tilde{\eta}^{k}f$ means $k$-times directional derivative of $f$ in the direction $\tilde{\eta}$. 
We call $r_b(p)$ and $r_c(p)$ the {\it bias} and the {\it secondary cuspidal curvature} of $f$ at $p$, respectively (\cite{oset-saji}). 

Let $p$ be a pure-frontal singular point of $f$.
there exists a function $l\colon(-\eps,\eps)\to\R$ such that 
$\tilde{\eta}\tilde{\eta}\tilde{\eta}f(\gamma(t))=l(t)\tilde{\eta}\tilde{\eta}f(\gamma(t))$. 
Thus for a pure-frontal singular point, 
we can define $r_b$ and $r_c$ along $\gamma(t)$ 
by 
$r_b(t)=r_b(\gamma(t))$ and $r_c(t)=r_c(\gamma(t))$.
We also call $r_b(t)$ and $r_c(t)$ the {\it bias} and the 
{\it secondary cuspidal curvature} along $\gamma(t)$, 
respectively (\cite{honda-saji}).

\begin{fact}\label{fact:u-orthogonal}
Let $f\colon V\to\R^3$ be a frontal, and let $p$ be
a singular point of the first kind. 
There exists an adapted coordinate system $(u,v)$ around $p$ such that 
{\rm (1)}
$|f_u(u,0)|=|f_{vv}(u,0)|=1$,
$\inner{f_u}{f_{vv}}(u,0)=0$ or
{\rm (2)}
$\inner{f_u}{f_{vv}}(u,0)=\inner{f_u}{f_{vvv}}(u,0)=0.$
\end{fact}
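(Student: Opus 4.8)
The plan is to start from an arbitrary adapted coordinate system $(u,v)$ around $p$ and to modify it in two successive steps, changing only the $v$-coordinate so that the $u$-axis remains the singular curve and $(\partial_u,\partial_v)$ remains an adapted pair. Since $p$ is of the first kind, we have $f_v(u,0)=0$ along the singular curve, so $f_{vv}(u,0)\neq0$ there (this is the non-degeneracy/first-kind condition expressed in adapted coordinates), and $f_u(u,0)\neq0$. Thus both vectors appearing in the statement are nonzero along $\{v=0\}$, and it makes sense to try to normalize their lengths and their inner product.

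For case (1), I would first rescale the $u$-parameter (a coordinate change $u\mapsto\tilde u(u)$) to arrange $|f_u(u,0)|=1$; this is an ODE $|d\tilde u/du|\,|f_u|=1$ which is solvable because $|f_u(u,0)|$ is a smooth positive function. Next I would replace $v$ by $v/a(u)$ for a suitable smooth positive function $a(u)$, which multiplies $f_{vv}(u,0)$ by $1/a(u)^2$ without affecting $f_u(u,0)$ on the singular set; choosing $a(u)=|f_{vv}(u,0)|^{1/2}$ normalizes $|f_{vv}(u,0)|=1$. To kill the inner product $\inner{f_u}{f_{vv}}(u,0)$ I would then use a shear $v\mapsto v+b(u)$... wait, that changes $f_u$ on $\{v=0\}$ by $b'(u)f_v(u,0)=0$, so it does not affect $f_u(u,0)$, but it changes $f_{vv}(u,0)$ only through higher-order terms — more usefully, a coordinate change of the form $v\mapsto v\cdot c(u)+\cdots$ or a shear in the $u$-direction $u\mapsto u+\beta(u)v$ will rotate $f_{vv}(u,0)$ within the normal plane to $f_u$. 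The cleanest route: after the length normalizations, the component of $f_{vv}(u,0)$ along $f_u(u,0)$ can be removed by a $u$-dependent shear because, along the singular curve, $f_v\equiv0$ forces the relevant correction terms to land exactly on the span of $f_u$; I would track precisely which shear coefficient is needed and verify it is smooth.

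For case (2), one does not insist on unit lengths but instead needs two scalar conditions, $\inner{f_u}{f_{vv}}(u,0)=0$ and $\inner{f_u}{f_{vvv}}(u,0)=0$, along the whole singular curve. The first is achieved exactly as in case (1). For the second I would use the remaining freedom in the $v$-reparametrization: replacing $v$ by $v+\rho(u)v^2/2+\cdots$ changes $f_{vvv}(u,0)$ by adding a multiple of $f_{vv}(u,0)$ plus a multiple of $f_v(u,0)=0$, so $\inner{f_u}{f_{vvv}}(u,0)$ shifts by $3\rho(u)\inner{f_u}{f_{vv}}(u,0)$; but once the first condition holds that is zero, so instead I should use a $u$-shear $u\mapsto u+\sigma(u)v^2/2$, which alters $f_{vvv}(u,0)$ by $3\sigma(u)f_{uv}(u,0)+\cdots$, and then $\inner{f_u}{f_{vvv}}(u,0)$ changes by $3\sigma(u)\inner{f_u}{f_{uv}}(u,0)$. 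Here $\inner{f_u}{f_{uv}}(u,0)=\tfrac12\partial_u|f_u(u,0)|^2$ need not be nonzero, so this particular shear may be insufficient; the right move is to combine it with the $v$-scaling and a shear $u\mapsto u+\sigma(u)v$, whose second-order effect on $f_{vvv}(u,0)$ is $3\sigma(u)f_{uvv}(u,0)$, and to check that the component of $f_{uvv}(u,0)$ along $f_u(u,0)$ is generically available — tracing through, the combined available directions do span the needed one-dimensional correction because $f_{vv}(u,0)$ itself is not tangent to $f_u$ (the surface is genuinely two-dimensional there). I expect the main obstacle to be bookkeeping: verifying that after imposing the first condition the remaining coordinate freedom genuinely moves $\inner{f_u}{f_{vvv}}(u,0)$, i.e. that the relevant $2\times2$ "effect matrix" of the allowed shear/scaling coefficients on the two inner products is invertible along $\{v=0\}$; this is where one must be careful rather than wave hands, and it is the crux of the lemma. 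Everything else reduces to solving first-order linear ODEs with smooth coefficients, which always have smooth local solutions.
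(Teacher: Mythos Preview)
The paper itself does not prove this fact in detail; it only cites \cite[Corollary 3.5]{honda-saji} for case~(2) and remarks that case~(1) is similar. So there is no detailed argument in the paper to compare against, and your coordinate-change strategy is the standard one. For case~(1) your plan is essentially right: a quadratic shear $u=\tilde u+c(\tilde u)\tilde v^{2}$ adds $2c\,f_u$ to $f_{vv}(u,0)$ and lets you kill $\inner{f_u}{f_{vv}}(u,0)$, after which arclength reparametrisations of $u$ and of $v$ (the latter via $v\mapsto a(u)v$) normalise the two lengths without disturbing the orthogonality.

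For case~(2), however, there is a genuine gap. None of the three moves you propose does the job. First, in adapted coordinates one has $f_v(u,0)\equiv 0$, hence $f_{uv}(u,0)\equiv 0$; so your shear $u\mapsto u+\tfrac{1}{2}\sigma(u)v^{2}$, which you compute correctly as adding $3\sigma f_{uv}(u,0)$ to $f_{vvv}(u,0)$, in fact adds \emph{nothing} to $f_{vvv}(u,0)$, while it \emph{does} add $\sigma f_u$ to $f_{vv}(u,0)$ and therefore destroys the first orthogonality condition. Second, the linear shear $u\mapsto u+\sigma(u)v$ is not admissible at all: along $\{v=0\}$ it sends $\partial_{\tilde v}$ to $\sigma\partial_u+\partial_v$, so $df(\partial_{\tilde v})=\sigma f_u\neq 0$ and the new $v$-direction is no longer null, i.e.\ the coordinates cease to be adapted. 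Third, you already observed that the $v$-reparametrisation $v\mapsto v+\tfrac{1}{2}\rho(u)v^{2}$ shifts $\inner{f_u}{f_{vvv}}(u,0)$ only by a multiple of $\inner{f_u}{f_{vv}}(u,0)$, which is already zero. Thus your ``$2\times 2$ effect matrix'' is not invertible: all the moves you list act trivially on the second inner product once the first vanishes.

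The missing idea is simply to go one order higher: use the cubic shear $u=\tilde u + c_3(\tilde u)\,\tilde v^{3}$ (with $v=\tilde v$). This keeps the coordinates adapted (since $u_{\tilde v}=3c_3\tilde v^{2}$ vanishes on $\{v=0\}$), leaves $f_{vv}(u,0)$ untouched, and changes $f_{vvv}(u,0)$ by $6c_3 f_u(u,0)$. Choosing $c_3(u)=-\inner{f_u}{f_{vvv}}(u,0)/\bigl(6|f_u(u,0)|^{2}\bigr)$ then kills $\inner{f_u}{f_{vvv}}(u,0)$ without disturbing the first condition. With this correction, both cases reduce, as you anticipated, to choosing smooth coefficient functions along the singular curve, and no ODE or invertibility argument is needed.
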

An adapted coordinate system satisfying (1) (resp. (2)) 
is said to be 
{\it orthogonal adapted} (resp. {\it normally adapted}).
See {\cite[Corollary 3.5]{honda-saji}} for a proof of (2).
The other statements can be shown by the similar way.

\subsection{Fundamental forms and invariants} 
We consider coefficients of the first and the second fundamental form of a frontal. 
Let $f\colon V\to\R^3$ be a frontal, and $p$ be a non-front singular point. 
Then we take an adapted coordinate system $(U;u,v)$ centered at $p$. 
Since $f_v(u,0)=0$, there exists a $C^\infty$ function $h\colon U\to\R^3$ such that $f_v=vh$ by the division lemma. 
Moreover, since any $(u,0)\in S(f)$ is a 
singular point of the first kind, 
$\lambda_v(u,0)=\det(f_u,h,\nu)(u,0)\neq0$. 
Thus $h\neq0$ near $p$ and $\{f_u,h,\nu\}$ forms a frame along $f$. 
Using this frame, we define the following functions on $U$:
\begin{align}
\begin{aligned}\label{eq:fundamentals}
\tilde{E}&=\inner{f_u}{f_u},& \tilde{F}&=\inner{f_u}{h}, & \tilde{G}&=\inner{h}{h},\\
\tilde{L}&=-\inner{f_u}{\nu_u},& \tilde{M}&=-\inner{h}{\nu_u}, & \tilde{N}&=-\inner{h}{\nu_v}.
\end{aligned}
\end{align}
We note that $\tilde{E}\tilde{G}-\tilde{F}^2>0$ on $U$. 
Moreover, we notice that $\nu$ can be chosen as $\nu=\pm(f_u\times h)/|f_u\times h|$. 

Let us denote by $E$, $F$, $G$, $L$, $M$ and $N$ the coefficients of the first and the second fundamental form of $f$ on $U\setminus\{v=0\}$
obtained by the usual manner. 
Then we have 
\begin{equation}\label{eq:difference}
E=\tilde{E},\ F=v\tilde{F},\ G=v^2\tilde{G},\ 
L=\tilde{L},\ M=v\tilde{M},\ N=v\tilde{N}.
\end{equation}

\begin{lemma}\label{lem:tilde-eta}
Let $f$ be a frontal and $p$ a non-front singular point. 
Take an orthogonal adapted coordinate system $(U;u,v)$ around $p$. 
Then we can take a null vector field $\tilde{\eta}$ satisfying 
$\inner{f_u}{\tilde{\eta}\tilde{\eta}f}=\inner{f_u}{\tilde{\eta}\tilde{\eta}\tilde{\eta}f}=0$ at $p$ by setting 
\begin{equation}\label{eq:tilde-eta}
\tilde{\eta}=-v^2\tilde{F}_v(p)\partial_u+\partial_v.
\end{equation}
\end{lemma}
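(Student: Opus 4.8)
The plan is to realize $\tilde{\eta}$ as the first‑order operator $-cv^2\partial_u+\partial_v$, where $c=\tilde{F}_v(p)\in\R$ is a constant, to expand the iterated directional derivatives $\tilde{\eta}\tilde{\eta}f$ and $\tilde{\eta}\tilde{\eta}\tilde{\eta}f$ as vector‑valued functions of $(u,v)$, evaluate them at $p$ (where $v=0$), and pair the results with $f_u$. Before anything else I would verify that $\tilde{\eta}$ is genuinely a null vector field: since the coordinates are adapted, the division lemma gives $h$ with $f_v=vh$, so $df(\tilde{\eta})=-cv^2f_u+f_v=v(-cvf_u+h)$ vanishes along $\{v=0\}=S(f)\cap U$, and the $\partial_v$‑component of $\tilde{\eta}$ equals $1$; thus $(\partial_u,\tilde{\eta})$ is an adapted pair.

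First I would record the boundary data forced by $f_v=vh$: differentiating gives $f_{uv}=vh_u$, $f_{vv}=h+vh_v$, $f_{vvv}=2h_v+vh_{vv}$, hence along $v=0$ one has $f_{uv}(u,0)=0$, $f_{vv}(u,0)=h(u,0)$ and $f_{vvv}(u,0)=2h_v(u,0)$. I would also use that $(u,v)$ is orthogonal adapted, i.e.\ condition (1) of Fact \ref{fact:u-orthogonal}, which reads $\tilde{E}(u,0)=\tilde{G}(u,0)=1$ and $\tilde{F}(u,0)=0$; in particular $|f_u(p)|=1$ and $\inner{f_u}{h}(p)=0$.

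Expanding $\tilde{\eta}\tilde{\eta}f$ one finds that every term other than $f_{vv}$ carries a positive power of $v$, so $\tilde{\eta}\tilde{\eta}f(p)=f_{vv}(p)=h(p)$ and therefore $\inner{f_u}{\tilde{\eta}\tilde{\eta}f}(p)=\inner{f_u}{h}(p)=\tilde{F}(p)=0$; this step uses only orthogonality and would work for any null field $a\partial_u+\partial_v$ with $a(u,0)=0$. Applying $\tilde{\eta}$ once more and again discarding every term divisible by $v$ (the entire $-cv^2\partial_u$ contribution dies), exactly two terms survive at $p$, giving $\tilde{\eta}\tilde{\eta}\tilde{\eta}f(p)=-2c\,f_u(p)+f_{vvv}(p)=-2c\,f_u(p)+2h_v(p)$. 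Pairing with $f_u$ and using $|f_u(p)|^2=1$ yields $\inner{f_u}{\tilde{\eta}\tilde{\eta}\tilde{\eta}f}(p)=-2c+2\inner{f_u}{h_v}(p)$. To close the argument I would identify $c$: from $\tilde{F}_v=\inner{f_{uv}}{h}+\inner{f_u}{h_v}$ and $f_{uv}(p)=0$ one gets $c=\tilde{F}_v(p)=\inner{f_u}{h_v}(p)$, so the last pairing vanishes.

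The main obstacle is the third‑order computation: one must keep careful track of which monomials in $v$ survive upon setting $v=0$, and the cancellation that makes the pairing vanish is precisely what forces the $\partial_u$‑coefficient to be $-v^2\tilde{F}_v(p)$ — together with the identity $\tilde{F}_v(p)=\inner{f_u}{h_v}(p)$, which in turn relies on $f_{uv}(u,0)=0$. Everything else is routine differentiation and bookkeeping.
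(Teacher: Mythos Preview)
Your proposal is correct and follows essentially the same route as the paper: both arguments expand $\tilde{\eta}\tilde{\eta}f$ and $\tilde{\eta}\tilde{\eta}\tilde{\eta}f$ at $p$, use $f_{uv}(u,0)=0$, $f_{vv}(u,0)=h$, $f_{vvv}(u,0)=2h_v$, and close with the identity $\tilde{F}_v(p)=\inner{f_u}{h_v}(p)$. The only cosmetic difference is that the paper works with a generic coefficient $a(u,v)$ and \emph{derives} the constraints $a(u,0)=0$, $a_v(p)=0$, $a_{vv}(p)=-2\tilde{F}_v(p)$, whereas you start from the specific $a=-v^2\tilde{F}_v(p)$ and verify them.
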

\begin{proof}
Taking an orthogonal adapted coordinate system, 
an adapted pair of vector fields $(\xi,\eta)$ is given by $(\xi,\eta)=(\partial_u,\partial_v)$.  
We set $\tilde{\eta}$ as 
$\tilde{\eta}=a(u,v)\partial_u+\partial_v$
on $U$. 
Since $\tilde{\eta}$ is also a null vector field of $f$, 
$\tilde{\eta}f=af_u+vh=0$ on the $u$-axis, where $f_v=vh$. 
Thus $a(u,0)=0$ holds, in particular, $a(p)=0$. 

We next consider the second and the third order directional derivatives 
of $f$ in the direction $\tilde{\eta}$. 
By a direct calculation, it follows that 
\begin{align*}
\tilde{\eta}\tilde{\eta}f&=a(\tilde{\eta}f)_u+a_vf_u+vah_u+h+vh_v,\\
\tilde{\eta}\tilde{\eta}\tilde{\eta}f&
=a(\tilde{\eta}\tilde{\eta}f)_u+a_v(\tilde{\eta}f)_u+a(\tilde{\eta}f)_{uv}+a_{vv}f_u+2va_vh_u+ah_{uv}+2h_v+vh_{vv}.
\end{align*}
Since $a(u,0)=0$, $f_{vv}(u,0)=h(u,0)$, $\inner{f_u}{f_u}(u,0)=1$ and $\inner{f_u}{f_{vv}}(u,0)=0$, 
we see that $\inner{\xi f}{\tilde{\eta}\tilde{\eta}f}=\inner{f_u}{\tilde{\eta}\tilde{\eta}f}=a_v$ holds along the $u$-axis. 
Thus we get $a_v(p)=0$. 
Under this assumption, we have 
$\inner{\xi f}{\tilde{\eta}\tilde{\eta}\tilde{\eta}f}=\inner{f_u}{\tilde{\eta}\tilde{\eta}\tilde{\eta}f}=a_{vv}+2\inner{f_u}{h_v}$ at $p$. 
By the definition of $\tilde{F}$, we see
$\tilde{F}_v=\inner{f_u}{h}_v=\inner{f_{uv}}{h}+\inner{f_u}{h_v}=\inner{f_u}{h_v}$ along the $u$-axis. 
Especially, $\tilde{F}_v(p)=\inner{f_u}{h_v}(p)$ holds. 
Thus setting $a_{vv}(p)=-2\tilde{F}_v(p)$, $\inner{\xi f}{\tilde{\eta}\tilde{\eta}\tilde{\eta}f}=0$ at $p$, 
and hence we have the assertion.  
\end{proof}

By Lemma \ref{lem:tilde-eta}, 
if $p$ is a pure-frontal singular point of $f$, then we can take $\tilde{\eta}$ as 
\begin{equation}\label{eq:pure-eta}
\tilde{\eta}=-v^2\tilde{F}_v(u,0)\partial_u+\partial_v
\end{equation}
on $U$. 

\begin{lemma}\label{prop:invariants}
Let $f\colon U\to\R^3$ be a frontal and $p\in U$ a non-front singular point of $f$. 
Let $(u,v)$ be an orthogonal adapted coordinate system around $p$ 
satisfying that $\det(f_u,f_{vv},\nu)(u,0)>0$. 
Then 
\begin{align}
\begin{aligned}\label{eq:invariants}
\kappa_\nu(u)&={\tilde{L}}(u,0),\quad 
\kappa_c(u)={2\tilde{N}}(u,0),\quad
\kappa_t(u)={\tilde{M}}(u,0),\\
r_b(p)&={3\tilde{N}_v}(p),\quad
r_c(p)=12\left(\tilde{N}_{vv}-4\tilde{F}_v\tilde{M}-{2\tilde{G}_v\tilde{N}_v}\right)(p)
\end{aligned}
\end{align}
hold.
\end{lemma}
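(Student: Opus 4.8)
The plan is to reduce all five identities to computations with directional derivatives of $f$ at $p$, carried out in the moving frame $\{f_u,h,\nu\}$, which the hypotheses make into a positively oriented orthonormal frame along the $u$-axis. In an orthogonal adapted coordinate system with $\det(f_u,f_{vv},\nu)(u,0)>0$ one has $f_v=vh$, $h(u,0)=f_{vv}(u,0)$, $|f_u(u,0)|=|h(u,0)|=1$, $\inner{f_u}{h}(u,0)=0$ and $\nu=(f_u\times h)/|f_u\times h|$. From $f_v=vh$ we get $f_{uv}(u,0)=0$, $f_{uuv}(u,0)=0$, $f_{vvv}(u,0)=2h_v(u,0)$ and $f_{uvv}(u,0)=h_u(u,0)$; differentiating $\inner{f_u}{\nu}=\inner{h}{\nu}=0$ and $\inner{\nu}{\nu}=1$ gives, along the $u$-axis, $\tilde L=\inner{f_{uu}}{\nu}$, $\tilde M=\inner{h_u}{\nu}$, $\tilde N=\inner{h_v}{\nu}$ together with $\inner{\nu_u}{f_u}=-\tilde L$, $\inner{\nu_u}{h}=-\tilde M$, $\inner{\nu_v}{h}=-\tilde N$ and $\inner{\nu_v}{f_u}(u,0)=0$, so that $\nu_v(p)=-\tilde N(p)h(p)$, and $\tilde N(p)=0$ since $p$ is non-front (equivalently $\kappa_c(p)=0$ by Fact~\ref{fact:frontal}), whence $\nu_v(p)=0$. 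This vanishing of $\nu_v$ at $p$ is the only place where the non-front hypothesis is really used.

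For the first line of \eqref{eq:invariants} I would invoke the known expressions for $\kappa_\nu$, $\kappa_c$, $\kappa_t$ from \cite{front,martins-saji,msuy}; in an orthogonal adapted frame their normalizing factors all become $1$ and they reduce, along the $u$-axis, to the $\nu$-components of $f_{uu}$, $f_{vvv}$ and $f_{uvv}$, respectively. The relations above then give directly $\kappa_\nu(u)=\tilde L(u,0)$, $\kappa_c(u)=\inner{f_{vvv}}{\nu}(u,0)=2\inner{h_v}{\nu}(u,0)=2\tilde N(u,0)$ and $\kappa_t(u)=\inner{h_u}{\nu}(u,0)=\tilde M(u,0)$.

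For $r_b$ and $r_c$ I would take the explicit null vector field $\tilde\eta=-v^2\tilde F_v(p)\,\partial_u+\partial_v$ of Lemma~\ref{lem:tilde-eta}, which satisfies the normalization needed in \eqref{eq:invariants2}; writing $a:=-\tilde F_v(p)v^2$, the coefficient depends on $v$ alone with $a(0)=a_v(0)=0$, $a_{vv}\equiv-2\tilde F_v(p)$, $a_u\equiv0$, so iterating $\tilde\eta=a\,\partial_u+\partial_v$ leaves only finitely many terms surviving at $p$. Using the formulas in the proof of Lemma~\ref{lem:tilde-eta} one finds $\tilde\eta\tilde\eta f(p)=h(p)$, hence $|\xi f|(p)=|\xi f\times\tilde\eta\tilde\eta f|(p)=1$ and $\nu(p)=f_u(p)\times h(p)$, so \eqref{eq:invariants2} becomes $r_b(p)=\inner{\tilde\eta^4 f}{\nu}(p)$ and $r_c(p)=\inner{3\,\tilde\eta^5 f-10\,l\,\tilde\eta^4 f}{\nu}(p)$. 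Moreover $\tilde\eta\tilde\eta\tilde\eta f(p)=-2\tilde F_v(p)f_u(p)+2h_v(p)$; decomposing $h_v(p)$ in the orthonormal frame and using $\tilde N(p)=0$ shows this equals $\tilde G_v(p)h(p)=\tilde G_v(p)\,\tilde\eta\tilde\eta f(p)$, whence $l=\tilde G_v(p)$. A direct iteration then yields $\tilde\eta^4 f(p)=3h_{vv}(p)$ and $\tilde\eta^5 f(p)=-20\tilde F_v(p)h_u(p)+4h_{vvv}(p)$.

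Finally, differentiating $\tilde N=\inner{h_v}{\nu}$ and using $\nu_v(p)=0$ gives $\inner{h_{vv}}{\nu}(p)=\tilde N_v(p)$, hence $r_b(p)=3\tilde N_v(p)$. Differentiating once more and computing $\nu_{vv}(p)=-\tilde M(p)f_u(p)-\tilde N_v(p)h(p)$ from the structure relations for $\nu$ yields $\inner{h_{vvv}}{\nu}(p)=\tilde N_{vv}(p)+\tilde F_v(p)\tilde M(p)+\tfrac12\tilde G_v(p)\tilde N_v(p)$; substituting this together with $\inner{h_u}{\nu}(p)=\tilde M(p)$, $\inner{h_{vv}}{\nu}(p)=\tilde N_v(p)$ and $l=\tilde G_v(p)$ into the expression for $r_c(p)$ produces $r_c(p)=12\bigl(\tilde N_{vv}-4\tilde F_v\tilde M-2\tilde G_v\tilde N_v\bigr)(p)$, as claimed. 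The main obstacle is this last step: the computation of $\tilde\eta^5 f(p)$ has many terms, and isolating its $\nu$-component correctly requires careful bookkeeping of which quantities vanish at $p$ — above all $\nu_v(p)=0$ — and of $\nu_{vv}(p)$, re-expressed through $\tilde M$ and $\tilde N_v$ via the structure equations for $\nu$; everything else is forced by the orthogonal adapted normalization and the division $f_v=vh$.
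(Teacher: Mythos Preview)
Your proposal is correct and follows essentially the same route as the paper's proof: you use the special null vector field $\tilde\eta=-v^2\tilde F_v(p)\partial_u+\partial_v$ from Lemma~\ref{lem:tilde-eta}, obtain $l=\tilde G_v(p)$, compute $\tilde\eta^4 f(p)=3h_{vv}(p)$ and $\tilde\eta^5 f(p)=-20\tilde F_v(p)h_u(p)+4h_{vvv}(p)$, and then reduce $\inner{h_{vv}}{\nu}$ and $\inner{h_{vvv}}{\nu}$ at $p$ to $\tilde N_v,\tilde N_{vv},\tilde F_v,\tilde G_v,\tilde M$ via $\nu_v(p)=0$ and the decomposition $h_v(p)=\tilde F_v(p)f_u(p)+\tfrac12\tilde G_v(p)h(p)$. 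The only cosmetic difference is that you package the structure relations as the vector identity $\nu_{vv}(p)=-\tilde M(p)f_u(p)-\tilde N_v(p)h(p)$ and differentiate $\tilde N=\inner{h_v}{\nu}$, whereas the paper differentiates $\tilde N=-\inner{h}{\nu_v}$ and extracts the components $\inner{f_u}{\nu_{vv}}(p)$, $\inner{h}{\nu_{vv}}(p)$ separately; the computations are equivalent.
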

\begin{proof}
For $\kappa_\nu$, $\kappa_c$ and $\kappa_t$, see \cite[Lemma 2.7]{tera}. 
We show $r_b$ and $r_c$. 
First, we calculate directional derivatives of $f$ in the direction $\tilde{\eta}$ as in \eqref{eq:tilde-eta}. 
By the proof of Lemma \ref{lem:tilde-eta}, we see that 
$\tilde{\eta}^3f=a_{vv}f_u+2h_v=\tilde{G}_vh=\tilde{G}_v\tilde{\eta}^2f$ holds at $p$, where $a=-v^2\tilde{F}_v(p)$. 
In particular, we have $l=\tilde{G}_v(p)$ and 
\begin{equation}\label{eq:fvfvgv}
h_v(p)=\tilde{F}_v(p)f_u(p)+(\tilde{G}_v(p)/2)h(p).
\end{equation}
We consider $\tilde{\eta}^4f$ and $\tilde{\eta}^5f$. 
By direct calculations, we have 
\begin{align*}
\tilde{\eta}^4f&=a(\tilde{\eta}^3f)_u+a_v(\tilde{\eta}^2f)_u+a(\tilde{\eta}^2f)_{uv}
+a_{vv}(\tilde{\eta}f)_u+2a_v(\tilde{\eta}f)_{uv}+a(\tilde{\eta}f)_{uvv}\\
&\quad +a_{vvv}f_u+3a_{vv}f_{uv}+3a_vf_{uvv}+af_{uvvv}+f_{vvvv},\\
\tilde{\eta}^5f&=a(\tilde{\eta}^4f)_u+a_v(\tilde{\eta}^3f)+a(\tilde{\eta}^3f)_{v}
+a_{vv}(\tilde{\eta}^2f)_u+2a_v(\tilde{\eta}^2f)_{uv}+a(\tilde{\eta}^2f)_{uvv}\\
&\quad+a_{vvv}(\tilde{\eta}f)_u+3a_{vv}(\tilde{\eta}f)_{uv}+3a_v(\tilde{\eta}f)_{uvv}+a(\tilde{\eta}f)_{uvvv}\\
&\quad +a_{vvvv}f_u+4a_{vvv}f_{uv}+6a_{vv}f_{uvv}+4a_{v}f_{uvvv}+af_{uvvvv}+f_{vvvvv}.
\end{align*}
Since $\tilde{\eta}f=af_u+f_v$ and $\tilde{\eta}^2f=a(\tilde{\eta}f)_u+a_vf_u+af_{uv}+f_{vv}$, 
we get $(\tilde{\eta}f)_{uv}=f_{uvv}$ and $(\tilde{\eta}^2f)_u=f_{uvv}$ at $p$ because 
$a(p)=a_v(p)=f_{uv}(p)=(\tilde{\eta}f)_u(p)=0$. 
Thus we see that 
$\tilde{\eta}^4f=f_{vvvv}$ and 
$\tilde{\eta}^5f=10a_{vv}f_{uvv}+f_{vvvvv}$ 
hold at $p$. 
Since $f_v=vh$, we obtain 
$f_{uvv}=h_u$, $f_{vvvv}=3h_{vv}$ and $f_{vvvvv}=4h_{vvv}$ at $p$. 
Therefore it follows that 
\begin{equation}\label{eq:eta^4}
\tilde{\eta}^4f=3h_{vv},\quad 
\tilde{\eta}^5f=10a_{vv}h_u+4h_{vvv}
\end{equation}
at $p$. 

We consider $r_b$. 
Since $(u,v)$ is an orthogonal adapted coordinate system, 
we have $|f_u|^2=1$, $|h|^2=1$, $\inner{f_u}{h}=0$,  
$$\dfrac{\det(f_u,\tilde{\eta}^2f,\tilde{\eta}^4f)}{|f_u\times \tilde{\eta}^2f|^3}=\dfrac{3\inner{\nu}{h_{vv}}}{|f_u\times h|^2}=3\inner{\nu}{h_{vv}}$$
along the $u$-axis, 
where we use relations $\nu=(f_u\times h)/|f_u\times h|$ and \eqref{eq:eta^4}.  
Since $\inner{h}{\nu}=0$, we have
$\inner{h_v}{\nu}+\inner{h}{\nu_v}=0$, 
$\inner{h_{vv}}{\nu}+2\inner{h_v}{\nu_v}+\inner{h}{\nu_{vv}}=0$.
Further, by Fact \ref{fact:frontal} and the expression of 
$\kappa_c$ as in \eqref{eq:invariants}, 
$\tilde{N}=-\inner{h}{\nu_v}=0$ at $p$. 
Thus $\nu_v(p)=0$ holds, and hence we have 
$\inner{h_{vv}}{\nu}(p)+\inner{h}{\nu_{vv}}(p)=0$. 
On the other hand, differentiating $\tilde{N}=-\inner{h}{\nu_v}$ by $v$, we see that 
$\tilde{N}_v=-\inner{h_v}{\nu_v}-\inner{h}{\nu_{vv}}$. 
Since $\inner{h}{\nu_{vv}}(p)=-\inner{h_{vv}}{\nu}(p)$, it holds that 
$\tilde{N}_v(p)=\inner{h_{vv}}{\nu}(p)$. 
Therefore by \eqref{eq:invariants2}, $r_b$ can be expressed as in \eqref{eq:invariants}.

Finally, we consider $r_c$. 
%In this case, there exists a real number $l\in\R$ such that 
%$f_{vvv}=lf_{vv}$ holds at $p$.
%Since $f_{vv}(p)=h(p)$, $f_{vvv}(p)=2h_{v}(p)$, we have 
%$\tilde{G}_v(p)=l\tilde{G}(p)$, and hence $l=\tilde{G}_v(p)/\tilde{G}(p)$ holds.
%Next, $f_{vvvv}$ and $f_{vvvvv}$ satisfy 
%$f_{vvvv}(p)=3h_{vv}(p)$, $f_{vvvvv}(p)=4h_{vvv}(p)$.
By above calculations, \eqref{eq:invariants2} and \eqref{eq:eta^4},
we see that
$$r_c=30a_{vv}\inner{\nu}{h_u}+{\inner{\nu}{12h_{vvv}-30lh_{vv}}}=-60\tilde{F}_v\tilde{M}+\inner{\nu}{12h_{vvv}-30lh_{vv}}$$
holds at $p$, where $l=\tilde{G}_v(p)$. 
The first and the second order derivatives of $\tilde{N}=-\inner{h}{\nu_v}$ by $v$ are 
$\tilde{N}_v=-\inner{h}{\nu_{vv}}$ and $\tilde{N}_{vv}=-2\inner{h_v}{\nu_{vv}}-\inner{h}{\nu_{vvv}}$ at $p$. 
On the other hand, differentiating $\inner{h}{\nu}=0$ by $v$, we have 
$\inner{h_{vv}}{\nu}=-\inner{h}{\nu_{vv}}$, $\inner{h_{vvv}}{\nu}+3\inner{h_v}{\nu_{vv}}=-\inner{h}{\nu_{vvv}}$ at $p$.
Thus 
$$\inner{h_{vv}}{\nu}=-\inner{h}{\nu_{vv}}=\tilde{N}_v,\quad 
\inner{h_{vvv}}{\nu}=\tilde{N}_{vv}-\inner{h_{v}}{\nu_{vv}}.$$
Noticing that $h_v=\tilde{F}_{v}f_u+(\tilde{G}_v/2)h$ holds at $p$, we have 
$\inner{h_{vvv}}{\nu}=\tilde{N}_{vv}-\tilde{F}_v\inner{f_u}{\nu_{vv}}-(\tilde{G}_v/2)\inner{h}{\nu_{vv}}$. 
Differentiating $\inner{f_u}{\nu}=0$ by $v$ twice, we have 
$\inner{f_{uvv}}{\nu}+\inner{f_{u}}{\nu_{vv}}=\inner{h_u}{\nu}+\inner{f_{u}}{\nu_{vv}}=\tilde{M}+\inner{f_{u}}{\nu_{vv}}=0$. 
Thus it holds that 
$\inner{h_{vvv}}{\nu}=N_{vv}+\tilde{F}_v\tilde{M}+(\tilde{G}_v/2)\tilde{N}_v$ at $p$, 
and hence
$$r_c=-60\tilde{F}_v\tilde{M}+\left(12\inner{\nu}{h_{vvv}}-{30\tilde{G}_v}\inner{\nu}{h_{vv}}\right)
={12}\left(\tilde{N}_{vv}-4\tilde{F}_v\tilde{M}-{2\tilde{G}_v\tilde{N}_v}\right)$$
holds at $p$.
\end{proof}
\begin{corollary}\label{cor:25-invariants}
Under the same assumptions as in Lemma \ref{prop:invariants}, 
if $f$ is a frontal and $p$ is a pure-frontal singular point, then the bias $r_b$ and the secondary cuspidal curvature $r_c$ are written as 
\begin{equation}\label{eq:25invariants}
r_b(u)={3\tilde{N}_v}(u,0),\quad
r_c(u)=12\left(\tilde{N}_{vv}-4\tilde{F}_v\tilde{M}-{2\tilde{G}_v\tilde{N}_v}\right)(u,0)
\end{equation} 
along the $u$-axis.
\end{corollary}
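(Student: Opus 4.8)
The plan is to obtain Corollary \ref{cor:25-invariants} as the pointwise version of Lemma \ref{prop:invariants} along the entire singular curve, rather than at the single point $p$. First I would record that if $p$ is a pure-frontal singular point, then by definition the function $\psi$ in \eqref{eq:delta} vanishes identically along $\gamma(t)$, so Fact \ref{fact:frontal} gives $\kappa_c\equiv0$ along $\gamma$, and every point of the $u$-axis near $p$ is again a pure-frontal, in particular non-front, singular point of the first kind. Hence Lemma \ref{prop:invariants} is applicable at each such point $(u_0,0)$, and $r_b(u_0)=r_b(\gamma(u_0))$, $r_c(u_0)=r_c(\gamma(u_0))$ by the very definition of the bias and secondary cuspidal curvature along $\gamma(t)$.

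Next I would check that the hypotheses of Lemma \ref{prop:invariants} propagate along the $u$-axis. Since $(u,v)$ is orthogonal adapted, the relations $|f_u(u,0)|=|f_{vv}(u,0)|=1$, $\inner{f_u}{f_{vv}}(u,0)=0$ and $\det(f_u,f_{vv},\nu)(u,0)>0$ hold for all $u$, so after translating the origin to $(u_0,0)$ the coordinate system is still orthogonal adapted with the correct sign convention. I would also note that \eqref{eq:pure-eta} defines a single null vector field $\tilde{\eta}=-v^2\tilde{F}_v(u,0)\partial_u+\partial_v$ on $U$ which, by Lemma \ref{lem:tilde-eta} applied with $p$ replaced by $(u_0,0)$, satisfies $\inner{f_u}{\tilde{\eta}\tilde{\eta}f}=\inner{f_u}{\tilde{\eta}\tilde{\eta}\tilde{\eta}f}=0$ at $(u_0,0)$; this is exactly the null vector field with respect to which $r_b$ and $r_c$ along $\gamma(t)$ are defined, and along $\gamma$ the associated number becomes the function $l(u)=\tilde{G}_v(u,0)$ as in the proof of Lemma \ref{prop:invariants}. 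Finally, $\kappa_c\equiv0$ along $\gamma$ translates via \eqref{eq:invariants} into $\tilde{N}(u,0)\equiv0$, so the one place in the proof of Lemma \ref{prop:invariants} that invoked the non-front condition — namely $\tilde{N}(p)=0$, whence $\nu_v(p)=0$, together with the identity \eqref{eq:fvfvgv} — remains valid verbatim at every $(u_0,0)$.

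With these observations, applying Lemma \ref{prop:invariants} at $(u_0,0)$ yields $r_b(u_0)=3\tilde{N}_v(u_0,0)$ and $r_c(u_0)=12(\tilde{N}_{vv}-4\tilde{F}_v\tilde{M}-2\tilde{G}_v\tilde{N}_v)(u_0,0)$; letting $u_0$ range over a neighborhood of $0$ gives \eqref{eq:25invariants} as identities of functions of $u$. I expect the only real point needing care to be this last bookkeeping: confirming that every identity in the proof of Lemma \ref{prop:invariants} established ``at $p$'' is in fact an identity along the whole $u$-axis under the pure-frontal hypothesis (equivalently, that pure-frontal is a condition on the curve $\gamma$, not merely at the point), and that the $C^\infty$ dependence on $u$ is inherited from that of the coefficients $\tilde{F},\tilde{G},\tilde{M},\tilde{N}$. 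Since $\tilde{N}\equiv0$ along $\gamma$ by Fact \ref{fact:frontal}, this presents no difficulty.
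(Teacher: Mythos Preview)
Your proposal is correct and follows essentially the same route as the paper: both arguments observe that the pure-frontal hypothesis forces $\kappa_c\equiv0$, hence $\tilde N(u,0)\equiv0$ and $\nu_v(u,0)\equiv0$, and then rerun the computations of Lemma~\ref{prop:invariants} with the $u$-dependent null vector field $\tilde\eta$ of \eqref{eq:pure-eta} at each point of the $u$-axis. The paper compresses this into ``by similar calculations with $\tilde\eta$ as in \eqref{eq:pure-eta}'', whereas you make the pointwise reduction explicit; the content is the same.
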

\begin{proof}
We take an orthogonal adapted coordinate system $(U;u,v)$ around $p$ with $\det(f_u,h,\nu)(u,0)>0$. 
Since $\kappa_c(u)=0$ by Fact \ref{fact:frontal}, $\tilde{N}(u,0)=-\inner{h}{\nu_v}(u,0)=0$, 
especially $\nu_v(u,0)=0$. 
By similar calculations with $\tilde{\eta}$ as in \eqref{eq:pure-eta}, we have the assertions.
\end{proof}

For the Gauss map $\nu$ of a frontal $f$, by direct calculations
we have the following.
\begin{lemma}[{cf. \cite[Lemma 2.1]{tera0}}]\label{lem:weingarten}
Let $(U;u,v)$ be an adapted coordinate system  around a non-degenerate singular point of the first kind 
of a frontal $f$. 
Then 
\begin{equation}\label{eq:weingarten}
\nu_u=\frac{\tilde{F}\tilde{M}-\tilde{G}\tilde{L}}{\tilde{E}\tilde{G}-\tilde{F}^2}f_u+
\frac{\tilde{F}\tilde{L}-\tilde{E}\tilde{M}}{\tilde{E}\tilde{G}-\tilde{F}^2}h,\ 
\nu_v=\frac{\tilde{F}\tilde{N}-v\tilde{G}\tilde{M}}{\tilde{E}\tilde{G}-\tilde{F}^2}f_u+
\frac{v\tilde{F}\tilde{M}-\tilde{E}\tilde{N}}{\tilde{E}\tilde{G}-\tilde{F}^2}h.
\end{equation}
\end{lemma}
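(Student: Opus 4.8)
The plan is to derive the Weingarten-type formulas \eqref{eq:weingarten} by the standard method: express $\nu_u$ and $\nu_v$ in the frame $\{f_u, h, \nu\}$ along $f$, and determine the coefficients by taking inner products with the frame vectors and using the definitions \eqref{eq:fundamentals}.

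First I would note that since $\inner{\nu}{\nu}=1$, differentiating gives $\inner{\nu_u}{\nu}=\inner{\nu_v}{\nu}=0$, so both $\nu_u$ and $\nu_v$ lie in the span of $f_u$ and $h$ (which, together with $\nu$, is a frame near $p$ because $\tilde E\tilde G - \tilde F^2 > 0$). Thus I may write $\nu_u = A f_u + B h$ and $\nu_v = C f_u + D h$ for some $C^\infty$ functions $A,B,C,D$ on $U$. Taking the inner product of the first equation with $f_u$ and with $h$ yields the linear system
\begin{align*}
\tilde E A + \tilde F B &= \inner{\nu_u}{f_u} = -\tilde L,\\
\tilde F A + \tilde G B &= \inner{\nu_u}{h} = -\tilde M,
\end{align*}
using \eqref{eq:fundamentals}. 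Solving by Cramer's rule gives $A = (\tilde F\tilde M - \tilde G\tilde L)/(\tilde E\tilde G - \tilde F^2)$ and $B = (\tilde F\tilde L - \tilde E\tilde M)/(\tilde E\tilde G - \tilde F^2)$, which is exactly the first formula in \eqref{eq:weingarten}.

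For $\nu_v$ the only subtlety is that $-\inner{\nu_v}{f_u}$ is not one of the tilde-coefficients directly; here I would use $\inner{\nu}{f_u}=0$ together with $\inner{\nu}{f_v}=0$ and $f_v = vh$. Differentiating $\inner{\nu}{f_u}=0$ in $v$ gives $\inner{\nu_v}{f_u} = -\inner{\nu}{f_{uv}}$; differentiating $\inner{\nu}{h}=0$ in $u$ gives $\inner{\nu_u}{h} = -\inner{\nu}{h_u}$. Since $f_v = vh$ we have $f_{uv} = h + v h_u$, hence along with $f_{uv}=f_{vu}$ and $f_{vu} = \partial_u(vh)$ one obtains $\inner{\nu}{f_{uv}} = \inner{\nu}{h} + v\inner{\nu}{h_u} = 0 + v\inner{\nu}{h_u}$. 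But $\inner{\nu}{h_u} = -\inner{\nu_u}{h} = \tilde M$, so $\inner{\nu_v}{f_u} = -v\tilde M$, i.e. $-\inner{\nu_v}{f_u} = v\tilde M$. Combined with $-\inner{\nu_v}{h} = \tilde N$ from \eqref{eq:fundamentals}, the linear system
\begin{align*}
\tilde E C + \tilde F D &= -v\tilde M \cdot(-1)\cdot(-1)\ \text{err; precisely } \inner{\nu_v}{f_u} = -v\tilde M,\\
\tilde F C + \tilde G D &= \inner{\nu_v}{h} = -\tilde N,
\end{align*}
wait — I should write it cleanly as $\tilde E C + \tilde F D = -v\tilde M$ and $\tilde F C + \tilde G D = -\tilde N$, and Cramer's rule then yields $C = (\tilde F\tilde N - v\tilde G\tilde M)/(\tilde E\tilde G-\tilde F^2)$ and $D = (v\tilde F\tilde M - \tilde E\tilde N)/(\tilde E\tilde G-\tilde F^2)$, matching the second formula in \eqref{eq:weingarten}.

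There is no real obstacle here; the whole proof is a routine frame computation. The one point requiring a little care is the off-diagonal mixed partial $\inner{\nu_v}{f_u}$: it must be rewritten via the relation $f_v = vh$ and the symmetry of second derivatives, producing the factor of $v$ that distinguishes the $\nu_v$ formula from the regular-surface case. I would remark that all coefficients are indeed $C^\infty$ on $U$ because $\tilde E\tilde G - \tilde F^2 > 0$ everywhere (noted after \eqref{eq:fundamentals}), so the formulas are valid across the singular set $\{v=0\}$, not merely on $U\setminus\{v=0\}$.
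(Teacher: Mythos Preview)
Your approach is exactly the ``direct calculation'' the paper alludes to (no detailed proof is given there), and the argument is sound. One small slip: you write $f_{uv} = h + vh_u$, but since $v$ is independent of $u$ one has $f_{uv} = (vh)_u = vh_u$; your conclusion $\inner{\nu}{f_{uv}} = v\tilde M$ survives only because the spurious $h$ term happens to be orthogonal to $\nu$, so fix that line (and excise the draft-style ``err''/``wait'' self-corrections) before finalizing.
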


The Gaussian curvature $K$ and the mean curvature $H$ 
of a frontal $f$ with a singular point of the first kind are given as 
\begin{equation}\label{eq:K-H}
K=\dfrac{\tilde{L}\tilde{N}-v\tilde{M}^2}{v(\tilde{E}\tilde{G}-\tilde{F}^2)},\quad
H=\dfrac{\tilde{E}\tilde{N}-2v\tilde{F}\tilde{M}+v\tilde{G}\tilde{L}}{2v(\tilde{E}\tilde{G}-\tilde{F}^2)}.
\end{equation}
We note that behavior of $K$ and $H$ are 
investigated in \cite{msuy,front}. 
We here observe the behavior of the function $\Gamma=H^2-K$.
By a direct computation, 
\begin{align}
\label{eq:umbilicity1}
 &4v^2(\tilde{E}\tilde{G}-\tilde{F}^2)^2\Gamma\nonumber\\
=&
(\tilde{E}\tilde{N}-2v\tilde{F}\tilde{M}+v\tilde{G}\tilde{L})^2-4v(\tilde{E}\tilde{G}-\tilde{F}^2)(\tilde{L}\tilde{N}-v\tilde{M}^2)\\
\label{eq:umbilicity2}
=&
4v^2\dfrac{
(\tilde{E}\tilde{G}-\tilde{F}^2)(\tilde{E}\tilde{M}-\tilde{F}\tilde{L})^2}
{\tilde{E}}
+\left((\tilde{E}\tilde{N}-v\tilde{G}\tilde{L})-\dfrac{2v\tilde{F}}{\tilde{E}}(\tilde{E}\tilde{M}-\tilde{F}\tilde{L})\right)^2.
\end{align}
holds on $U\setminus\{v=0\}$. 
In particular, $\Gamma\geq0$.

%%%%%SECTION 3%%%%%
\section{Principal curvatures near a pure-frontal singular point}
We consider principal curvatures of a frontal near a pure-frontal singular point. 
\subsection{Behavior of principal curvatures near a pure-frontal singular point}
Let $f\colon V\to\R^3$ be a frontal and 
$p$ a pure-frontal singular point. 
%Assume that $p$ is a pure-frontal singular point of $f$. 
Then we take an orthogonal adapted coordinate system 
$(u,v)$ on $U$ centered at $p$.
By Fact \ref{fact:frontal}, \eqref{eq:invariants} and \eqref{eq:weingarten}, 
$\nu_v=0$ holds along the $u$-axis. 
Thus there exists a map $\vphi\colon U\to\R^3$ $(U\subset V)$ such that $\nu_v=v\vphi$. 
Moreover, since $\tilde{N}(u,0)=0$ by Fact \ref{fact:frontal} and \eqref{eq:invariants}, 
there exists a function $\tilde{N}_1$ such that $\tilde{N}=v\tilde{N}_1$, 
where $\tilde{N}_1=-\inner{h}{\vphi}$ holds. 

We notice that the Gaussian curvature 
and the mean curvature can be written as 
\begin{equation}\label{eq:K-H1}
K=\dfrac{\tilde{L}\tilde{N}_1-\tilde{M}^2}
{\tilde{E}\tilde{G}-\tilde{F}^2},\quad
H=\dfrac{\tilde{E}\tilde{N}_1
-2\tilde{F}\tilde{M}+\tilde{G}\tilde{L}}{2(\tilde{E}\tilde{G}-\tilde{F}^2)}.
\end{equation}
Thus both $K$ and $H$ are bounded $C^\infty$ functions on $U$. 
We have
\begin{equation}\label{eq:umbilicity-25}
\Gamma=\dfrac{(\tilde{E}\tilde{N}_1+\tilde{G}\tilde{L})^2
-4\tilde{E}\tilde{G}(\tilde{L}\tilde{N}_1-\tilde{M}^2)}
{4\tilde{E}^2\tilde{G}^2}
=\dfrac{1}{4}\left({\tilde{N}_1}
-{\tilde{L}}\right)^2
+{\tilde{M}^2}\geq0
\end{equation}
along the $u$-axis by \eqref{eq:umbilicity1} and \eqref{eq:K-H1}. 
Since $H$ and $K$ are $C^\infty$ functions, $\Gamma$ is also a $C^\infty$ function on $U$. 
Using $K$ and $H$, we define two functions 
$\kappa_j$ $(j=1,2)$ on $U$ by 
\begin{equation}\label{eq:25-principal}
\kappa_1=H+\sqrt{H^2-K},\quad \kappa_2=H-\sqrt{H^2-K}.
\end{equation}
These functions satisfy $\kappa_1\kappa_2=K$ and 
$\kappa_1+\kappa_2=2H$. 
As we will see later, we may regard $\kappa_1$ and 
$\kappa_2$ as principal curvatures of $f$. 
Since $H^2-K\geq0$, $\kappa_1$ and $\kappa_2$ 
are continuous functions on $U$. 
A point $p\in S(f)$ is an {\it umbilic point} of $f$ 
if $\Gamma(p)=0$. 
By \eqref{eq:25-principal}, if $p$ is an umbilic point,
then $\kappa_1=\kappa_2$ at $p$.

\begin{theorem}\label{thm:principal1}
Let $f$ be a frontal and $p$ a pure-frontal singular point. 
Then principal curvatures $\kappa_j$ $(j=1,2)$ of 
$f$ can be extended as $C^\infty$ functions near $p$ if and only if 
\begin{equation}\label{eq:umbilicity4}
\left(\dfrac{1}{3}r_b-\kappa_\nu\right)^2+4\kappa_t^2\neq0
\end{equation}
holds along the singular curve through $p$. 
In particular, a point $p$ is an umbilic point of 
$f$ if and only if $r_b(p)=3\kappa_\nu(p)$ and $\kappa_t(p)=0$ hold.
\end{theorem}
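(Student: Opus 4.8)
The plan is to reduce the question of $C^\infty$-extendability to the smoothness of the square root $\sqrt{H^2-K}=\sqrt{\Gamma}$ appearing in \eqref{eq:25-principal}. Since $H$ and $K$ are already $C^\infty$ on $U$ by \eqref{eq:K-H1}, and since $\kappa_1+\kappa_2=2H$, the functions $\kappa_1,\kappa_2$ are $C^\infty$ near $p$ if and only if $\sqrt{\Gamma}$ is $C^\infty$ near $p$. So the whole statement becomes: $\sqrt{\Gamma}$ extends smoothly across the singular curve $v=0$ if and only if \eqref{eq:umbilicity4} holds along that curve.

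First I would analyze $\Gamma$ near $v=0$. Using the expression $\Gamma=\bigl(\frac14(\tilde N_1-\tilde L)^2+\tilde M^2\bigr)$ valid along the $u$-axis together with the global formula \eqref{eq:umbilicity-25}, I would Taylor-expand $\Gamma$ in $v$ and identify $\Gamma(u,0)$. By Lemma \ref{prop:invariants} and Corollary \ref{cor:25-invariants}, along the $u$-axis we have $\kappa_\nu=\tilde L(u,0)$, $\kappa_t=\tilde M(u,0)$, $r_b=3\tilde N_v(u,0)$, and from $\tilde N=v\tilde N_1$ we get $\tilde N_1(u,0)=\tilde N_v(u,0)=\tfrac13 r_b(u)$. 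Substituting these into $\Gamma(u,0)$ gives
\[
\Gamma(u,0)=\frac14\Bigl(\frac13 r_b(u)-\kappa_\nu(u)\Bigr)^2+\kappa_t(u)^2.
\]
Thus condition \eqref{eq:umbilicity4} is exactly the statement that $\Gamma(u,0)\neq 0$, i.e. $\Gamma$ is nonzero along the singular curve. In that case $\Gamma>0$ on a neighborhood of $p$ (shrinking $U$), so $\sqrt{\Gamma}$ is $C^\infty$ there and we are done with the ``if'' direction.

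For the converse, I must show that when $\Gamma(p)=0$ the principal curvatures cannot be $C^\infty$. Here I would use the nonnegativity $\Gamma\geq 0$ (already established in \eqref{eq:umbilicity1}--\eqref{eq:umbilicity2}, hence along $v=0$ via \eqref{eq:umbilicity-25}): if $\Gamma(p)=0$ then $p$ is a minimum of the nonnegative $C^\infty$ function $\Gamma$, so $d\Gamma(p)=0$ and $\sqrt{\Gamma}$ fails to be differentiable at $p$ unless $\Gamma$ vanishes to even order with a smooth square root — which I would rule out by a Morse-type/order-of-vanishing argument, or more concretely by showing the two continuous branches $\kappa_1,\kappa_2$ cross at $p$ with $\kappa_1>\kappa_2$ nearby, forcing a corner in at least one of them unless they glue, which contradicts genericity of the vanishing. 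The cleanest route is: if both $\kappa_1$ and $\kappa_2$ were $C^\infty$ then $\Gamma=\tfrac14(\kappa_1-\kappa_2)^2$ would be the square of a $C^\infty$ function $g=\tfrac12(\kappa_1-\kappa_2)$, but $\Gamma\geq 0$ with $\Gamma(p)=0$ forces $g(p)=0$ and then generically $g$ changes sign, i.e. $\Gamma$ would have to factor as (smooth)$^2$; examining the $2$-jet of $\Gamma$ at $p$ using \eqref{eq:umbilicity-25} shows this is obstructed precisely when \eqref{eq:umbilicity4} fails. The last sentence of the theorem is then immediate: $p$ is umbilic iff $\Gamma(p)=0$ iff $\tfrac13 r_b(p)-\kappa_\nu(p)=0$ and $\kappa_t(p)=0$ iff $r_b(p)=3\kappa_\nu(p)$ and $\kappa_t(p)=0$.

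The main obstacle is the converse direction: proving non-smoothness when $\Gamma(p)=0$ requires care because a nonnegative smooth function vanishing at a point can still have a smooth square root (e.g. $\Gamma=u^2$). I expect the resolution is that under the pure-frontal hypothesis the $1$-jet of $\Gamma$ in the transverse direction is controlled by the invariants, and one shows that when \eqref{eq:umbilicity4} fails the function $\Gamma$ has a genuine $C^1$-but-not-$C^2$ obstruction to admitting a smooth square root — concretely, one would compute $\Gamma_v(u,0)$ and $\Gamma_{vv}(u,0)$ in terms of $\tilde L, \tilde M, \tilde N_1$ and their $v$-derivatives (which is where a short computation is genuinely needed) and check that the resulting $2$-jet of $\Gamma$ is not a perfect square of a smooth $1$-jet. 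I would isolate this as the key lemma and treat the rest as bookkeeping via Lemma \ref{prop:invariants} and \eqref{eq:25-principal}.
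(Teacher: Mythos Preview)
Your reduction to the smoothness of $\sqrt{\Gamma}$, and the identification of $\Gamma(u,0)$ with $\tfrac14(\tfrac13 r_b-\kappa_\nu)^2+\kappa_t^2$ via Lemma~\ref{prop:invariants} and Corollary~\ref{cor:25-invariants}, is exactly the paper's argument; its entire proof is the formula \eqref{eq:hgamma} followed by ``Since $\kappa_j=H+(-1)^{j+1}\sqrt{\Gamma}$, we see the assertion.'' For the ``if'' direction and the umbilic characterization you match the paper, and you have in fact gone further by flagging the converse as nontrivial --- the paper says nothing beyond the sentence just quoted.

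Your attempts at the converse, however, do not close the gap. The argument ``$g=\tfrac12(\kappa_1-\kappa_2)$ generically changes sign'' is empty: by definition $g=\sqrt{\Gamma}\ge0$ never changes sign, and a nonnegative $C^\infty$ function with an interior zero is unproblematic (take $\Gamma=u^4$, so $\sqrt\Gamma=u^2$ is smooth while \eqref{eq:umbilicity4} fails at $u=0$). Your $2$-jet proposal is essentially Theorem~\ref{thm:isolated-umbilic}: there one computes $\Gamma_{vv}(p)=\tfrac12(r_c(p)/24)^2$ and $\det\hess(\Gamma)(p)=(r_c(p)/24)^2\bigl[\tfrac14(\kappa_\nu'-r_b'/3)^2+(\kappa_t')^2\bigr]$, so under the \emph{extra} hypotheses $r_c(p)\neq0$ and $(\kappa_\nu'-r_b'/3,\kappa_t')(p)\neq(0,0)$ the function $\Gamma$ is Morse and $\sqrt{\Gamma}$ is genuinely nonsmooth. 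Without those hypotheses the Hessian degenerates (if $r_c(p)=0$ then $\Gamma_{uv}(p)=\Gamma_{vv}(p)=0$) and no $2$-jet obstruction exists. So the ``only if'' is not established either by your sketch or by the paper's one-line proof; making it rigorous would require either the nondegeneracy assumptions of Theorem~\ref{thm:isolated-umbilic} or a separate argument that the geometric origin of $\Gamma$ precludes a smooth square root at an umbilic singular point.
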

\begin{proof}
Let us take an orthogonal adapted coordinate system $(U;u,v)$ centered at $p$. 
Then by the assumption, we see that $\tilde{N}=v\tilde{N}_1$, and hence $\tilde{N}_v=\tilde{N}_1$ holds along the $u$-axis. 
By \eqref{eq:K-H1}, \eqref{eq:umbilicity-25}, \eqref{eq:invariants}, 
Lemma \ref{prop:invariants} and Corollary \ref{cor:25-invariants}, 
we see that 
\begin{equation}\label{eq:hgamma}
H=\dfrac{\kappa_\nu}{2}+\dfrac{r_b}{6},\quad 
\Gamma=H^2-K=\dfrac{1}{4}\left(\dfrac{r_b}{3}-\kappa_\nu\right)^2+\kappa_t^2
\end{equation}
holds at $p$ (cf. \cite{honda-saji}). 
Since
$\kappa_j=H+(-1)^{j+1}\sqrt{\Gamma}$,
we see the assertion.
The last asserton is obvious by \eqref{eq:hgamma}.
\end{proof}

\subsection{Principal vectors}
Let $f\colon V\to\R^3$ be a frontal and 
$p$ a singular point of the first kind. 
We assume that $p$ is a pure-frontal singular point of $f$,
and not an umbilic point.
We consider the principal vectors.

A vector $\bm{V}=(V_1,V_2)=V_1\partial_u+V_2\partial_v\in T_pU$ 
is a {\it principal vector
relative to\/} $\kappa$ if 
$$
\kappa\pmt{E&F\\F&G}\pmt{V_1\\V_2}
=
\pmt{L&M\\M&N}\pmt{V_1\\V_2}
$$
for the coefficients of the first and the second
fundamental forms.
The number $\kappa$ satisfying the above is called
the {\it principal curvature}, and also satisfies
\eqref{eq:25-principal}.
By Lemma \ref{lem:weingarten},
if $\bm{V}_j=(V_1^j,V_2^j)$ $(j=1,2)$ are principal vectors
relative to $\kappa_j$, then
\begin{equation}\label{eq:p-dir}
\begin{pmatrix}
\tilde{L}-\kappa_j\tilde{E} & v(\tilde{M}-\kappa_j\tilde{F}) \\
v(\tilde{M}-\kappa_j\tilde{F}) & v^2(\tilde{N}_1-\kappa_j\tilde{G})
\end{pmatrix}
\begin{pmatrix} V_1^j \\ V_2^j \end{pmatrix}=
\begin{pmatrix} 0 \\ 0 \end{pmatrix}
\end{equation} 
on $U$, where $\tilde{N}_1$ is a function satisfying $\tilde{N}=v\tilde{N}_1$. 
By factoring out $v$, the equation \eqref{eq:p-dir} are equivalent to 
\begin{equation}\label{eq:p-dir2}
\begin{pmatrix}
\tilde{L}-\kappa_j\tilde{E} & v(\tilde{M}-\kappa_j\tilde{F}) \\
\tilde{M}-\kappa_j\tilde{F} & v(\tilde{N}_1-\kappa_j\tilde{G})
\end{pmatrix}
\begin{pmatrix} V_1^j \\ V_2^j \end{pmatrix}=
\begin{pmatrix} 0 \\ 0 \end{pmatrix}.
\end{equation}
The $(1,1)$ and $(2,1)$ elements of the matrix as in \eqref{eq:p-dir2} are 
\begin{equation}\label{eq:lkemkf}
\tilde{L}-\kappa_j\tilde{E}=\dfrac{1}{2}\left\{\left({\kappa_\nu}-\dfrac{r_b}{3}\right)+(-1)^j\sqrt{\left(\dfrac{r_b}{3}-\kappa_\nu\right)^2+4\kappa_t^2}\right\},\quad 
\tilde{M}-\kappa_j\tilde{F}=\kappa_t,
\end{equation}
and $\tilde N_1-\kappa_j\tilde G=-\tilde{L}+\kappa_{j+1}\tilde{E}$ 
(we think $\kappa_3=\kappa_1$) at $p$
 (we take $-$ sign if $j=1$ and $+$ if $j=2$). 
Since if $\tilde{L}-\kappa_j\tilde{E}=0$, then
$\tilde{M}-\kappa_j\tilde{F}=0$, 
we set
\begin{equation}\label{eq:v11}
\bm{V}_j=(-v(\tilde{M}-\kappa_j\tilde{F}),\tilde{L}-\kappa_j\tilde{E}).
\end{equation}
Then $\bm{V}_j$ is a solution of \eqref{eq:p-dir2}, namely
$\bm{V}_j$ is a principal vector with respect to $\kappa_j$.
\begin{lemma}\label{lem:p-vect}
Under the above setting, $df(\bm{V}_1)$ is perpendicular to $df(\bm{V}_2)$ on the set of regular points $U\setminus\{v=0\}$.
\end{lemma}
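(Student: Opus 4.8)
The plan is to invoke the standard linear-algebra fact that on the regular part of $f$ the Weingarten (shape) operator is self-adjoint with respect to the first fundamental form, so that its eigenvectors belonging to distinct eigenvalues are mutually orthogonal; the lemma then amounts to the assertions that $\bm V_1,\bm V_2$ are such eigenvectors and that the eigenvalues $\kappa_1,\kappa_2$ are distinct on the regular set. First I would shrink $U$ so that $\Gamma=H^2-K>0$ holds on all of $U$. This is possible because $p$ is not umbilic, so $\Gamma(p)>0$, and because $\Gamma$ is continuous -- indeed $C^\infty$ -- by \eqref{eq:umbilicity-25} together with the fact that $H$ and $K$ are $C^\infty$ on $U$, as recorded after \eqref{eq:K-H1}. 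On such a $U$ the two continuous functions $\kappa_1=H+\sqrt{\Gamma}$ and $\kappa_2=H-\sqrt{\Gamma}$ of \eqref{eq:25-principal} are everywhere distinct, in particular on the regular set $U\setminus\{v=0\}$.

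Next I would work on $U\setminus\{v=0\}$, where the matrix $\pmt{E&F\\F&G}$ is positive definite since $EG-F^2=v^2(\tilde E\tilde G-\tilde F^2)>0$ by \eqref{eq:difference}. Hence the Weingarten operator $S=\pmt{E&F\\F&G}^{-1}\pmt{L&M\\M&N}$ is defined, and the identity $\pmt{E&F\\F&G}S=\pmt{L&M\\M&N}$ with symmetric right-hand side shows that $S$ is self-adjoint for the first fundamental form, i.e. $\inner{df(SX)}{df(Y)}=\inner{df(X)}{df(SY)}$ for all $X,Y$. By \eqref{eq:p-dir}, which after dividing its second row by $v$ becomes \eqref{eq:p-dir2}, the vector $\bm V_j$ of \eqref{eq:v11} satisfies $\bigl(\pmt{L&M\\M&N}-\kappa_j\pmt{E&F\\F&G}\bigr)\bm V_j=\bm 0$ on $U\setminus\{v=0\}$, hence $S\bm V_j=\kappa_j\bm V_j$ there; whenever $\bm V_j\neq\bm 0$ it is thus an eigenvector of $S$ with eigenvalue $\kappa_j$. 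Then the usual computation
\[
\kappa_1\inner{df(\bm V_1)}{df(\bm V_2)}=\inner{df(S\bm V_1)}{df(\bm V_2)}=\inner{df(\bm V_1)}{df(S\bm V_2)}=\kappa_2\inner{df(\bm V_1)}{df(\bm V_2)}
\]
together with $\kappa_1\neq\kappa_2$ forces $\inner{df(\bm V_1)}{df(\bm V_2)}=0$. If instead $\bm V_j=\bm 0$ at some point of $U\setminus\{v=0\}$, then $df(\bm V_j)=\bm 0$ there and orthogonality is trivial.

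There is essentially no hard step here; the only point requiring care is the bookkeeping around the degeneracy on $\{v=0\}$, namely shrinking $U$ by non-umbilicity at $p$ so that $\kappa_1\neq\kappa_2$ holds on the whole regular set and not merely along the singular curve (the fact that $\bm V_j$ solves \eqref{eq:p-dir2} on $U$ is already recorded just after \eqref{eq:v11}). Alternatively, one may bypass the operator formalism and expand directly
\[
\inner{df(\bm V_1)}{df(\bm V_2)}=v^2\bigl(\tilde E\,b_1b_2-\tilde F\,(a_1b_2+b_1a_2)+\tilde G\,a_1a_2\bigr),\qquad a_j=\tilde L-\kappa_j\tilde E,\ \ b_j=\tilde M-\kappa_j\tilde F,
\]
and then use $\kappa_1+\kappa_2=2H$, $\kappa_1\kappa_2=K$ together with the expressions \eqref{eq:K-H1} for $H$ and $K$ to check that the bracket vanishes identically; this is routine but somewhat longer than the eigenvector argument.
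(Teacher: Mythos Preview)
Your argument is correct. The self-adjointness of the shape operator with respect to the first fundamental form, combined with the distinctness of $\kappa_1,\kappa_2$ (ensured by shrinking $U$ so that $\Gamma>0$), gives the orthogonality immediately; your handling of the degenerate case $\bm V_j=0$ is also fine.

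The paper takes the route you sketch at the end as an ``alternative'': it expands $df(\bm V_j)=v\bigl(-(\tilde M-\kappa_j\tilde F)f_u+(\tilde L-\kappa_j\tilde E)h\bigr)$, computes
\[
\inner{df(\bm V_1)}{df(\bm V_2)}=v^2\Bigl(\tilde E\tilde M^2-2\tilde F\tilde L\tilde M+\tilde G\tilde L^2+(K\tilde E-2H\tilde L)(\tilde E\tilde G-\tilde F^2)\Bigr),
\]
and then substitutes \eqref{eq:K-H1} to see the bracket vanishes identically. The difference is that the paper's identity holds pointwise without any hypothesis on $\Gamma$, so no shrinking of $U$ is needed (at a regular umbilic point both $\bm V_j$ vanish and the identity is $0=0$). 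Your eigenvector argument is cleaner and more conceptual, but buys that economy at the cost of the preliminary shrinking step; the paper's direct expansion is slightly more computational but uniform in $U\setminus\{v=0\}$. Either way the content is the same classical fact.
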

\begin{proof}
We have 
\begin{equation}\label{eq:p-dir-diff}
df(\bm{V}_j)
=v\left(-(\tilde{M}-\kappa_j\tilde{F})f_u+(\tilde{L}-\kappa_j\tilde{E})h\right)
\end{equation}
for $j=1,2$, where $f_v=vh$. 
We note that $f_u$ and $h$ are linearly independent on $U\setminus\{v=0\}$ and 
$\bm{V}_j$ $(j=1,2)$ are non-zero. %by the non-umb assumption
Thus $-(\tilde{M}-\kappa_j\tilde{F})f_u+(\tilde{L}-\kappa_j\tilde{E})h\neq0$ 
on $U\setminus\{v=0\}$. 
We see that 
\begin{equation*}
\inner{df(\bm{V}_1)}{df(\bm{V}_2)}=v^2\left(\tilde{E}\tilde{M}^2-2\tilde{F}\tilde{L}\tilde{M}+\tilde{G}\tilde{L}^2
+(K\tilde{E}-2H\tilde{L})(\tilde{E}\tilde{G}-\tilde{F}^2)\right),
\end{equation*}
where $K=\kappa_1\kappa_2$ and $2H=\kappa_1+\kappa_2$ are the Gaussian curvature and the mean curvature, respectively. 
By \eqref{eq:K-H1}, it holds that $\inner{df(\bm{V}_1)}{df(\bm{V}_2)}=0$ on $U\setminus\{v=0\}$. 
\end{proof}

\begin{proposition}\label{prop:p-dir}
Let $p$ be a pure-frontal singular point and not 
an umbilic point of a frontal $f$. 
Then both principal vectors $\bm{V}_j$ $(j=1,2)$ 
can be extended as $C^\infty$ vector fields.
Moreover, we have the following.
\begin{enumerate}
\item Suppose that $\kappa_t(p)\neq0$. 
Then $\bm{V}_j\ne0$ $(j=1,2)$ 
and both $\bm{V}_1(p)$ and $\bm{V}_2(p)$ are parallel 
to the null vector $\eta$ at $p$.
\item Suppose that $\kappa_t=0$ along the singular curve through $p$. 
Then there exisits linearly independent vectors 
$\bm{W}_1,\bm{W}_2$ such that
$\bm{V}_j$ is parallel to $\bm{W}_j$ $(j=1,2)$.
\item Suppose that $r_b/3-\kappa_\nu\ne0$ on $\gamma$. 
Then $\gamma$ is a curvature line if and only if $\kappa_t=0$ on $\gamma$.
\end{enumerate}
\end{proposition}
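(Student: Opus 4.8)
The plan is to fix an orthogonal adapted coordinate system $(U;u,v)$ centered at $p$, so that $\gamma$ is the $u$-axis, $\gamma'=\partial_u$, and $(\partial_u,\partial_v)$ is an adapted pair, whence $\partial_v$ spans the null direction $\eta$ along $\gamma$. Along the $u$-axis one has $\tilde{F}=0$, $\tilde{E}=\tilde{G}=1$, $\tilde{L}=\kappa_\nu$, $\tilde{M}=\kappa_t$, and $\tilde{N}_1=r_b/3$ (the last from Corollary \ref{cor:25-invariants}, since $\tilde{N}=v\tilde{N}_1$), so that $\Gamma=H^2-K=\tfrac14(r_b/3-\kappa_\nu)^2+\kappa_t^2$ on $\gamma$, exactly as in \eqref{eq:umbilicity-25} and \eqref{eq:hgamma} (those formulas, stated there at $p$, hold at every point of $\gamma$ near $p$). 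The $C^\infty$-extendability of $\bm{V}_j$ is then immediate: since $p$ is not umbilic, $\Gamma(p)>0$, hence $\Gamma>0$ on a neighbourhood of $p$, so $\sqrt{\Gamma}$, and therefore $\kappa_j=H+(-1)^{j+1}\sqrt{\Gamma}$ and the entries of $\bm{V}_j$ in \eqref{eq:v11}, are $C^\infty$. Moreover the first entry of \eqref{eq:v11} carries a factor $v$, so restricting to $\gamma$ gives $\bm{V}_j(u,0)=\big(0,\ \tilde{L}-\kappa_j\tilde{E}\big)$ with $\tilde{L}-\kappa_j\tilde{E}=\tfrac12(\kappa_\nu-r_b/3)+(-1)^j\sqrt{\Gamma}$, by the $\gamma$-version of \eqref{eq:lkemkf}; I shall use this formula repeatedly.

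For (1): if $\kappa_t(p)\neq 0$ then $\sqrt{\Gamma(p)}>\tfrac12|r_b(p)/3-\kappa_\nu(p)|$, so $\tilde{L}-\kappa_j\tilde{E}\neq 0$ at $p$ for both $j$; hence $\bm{V}_j(p)=(0,\ \text{nonzero})\neq 0$, and both are parallel to $\partial_v$, that is, to $\eta$. For (2): if $\kappa_t\equiv 0$ on $\gamma$, then $\Gamma=\tfrac14(r_b/3-\kappa_\nu)^2$ on $\gamma$, and non-umbilicity at $p$ forces $r_b/3-\kappa_\nu\neq 0$ at $p$, hence---shrinking $U$---with a fixed sign on $\gamma$ and $\Gamma>0$ on all of $\gamma$. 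Then exactly one of the two functions $\tilde{L}-\kappa_j\tilde{E}$ on $\gamma$, say the one for $j=1$ (relabelling if necessary), equals the nonvanishing function $\kappa_\nu-r_b/3$, while the other, for $j=2$, vanishes identically on $\gamma$. So $\bm{W}_1:=\bm{V}_1$ is a $C^\infty$ field with $\bm{W}_1(p)\parallel\partial_v$, whereas $\bm{V}_2$ vanishes along $\gamma$. To recover the direction of $\bm{V}_2$ I would pass to the companion solution of \eqref{eq:p-dir2} coming from its second row, $\bm{V}_2':=\big(-v(\tilde{N}_1-\kappa_2\tilde{G}),\ \tilde{M}-\kappa_2\tilde{F}\big)$: it is parallel to $\bm{V}_2$, since the determinant of the matrix with columns $\bm{V}_2,\bm{V}_2'$ equals $v\big[(\tilde{L}-\kappa_2\tilde{E})(\tilde{N}_1-\kappa_2\tilde{G})-(\tilde{M}-\kappa_2\tilde{F})^2\big]$, which vanishes identically (the bracket is the reduced characteristic polynomial satisfied by $\kappa_2$), and near $p$ there are no umbilic points. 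Writing $\tilde{F}=v\tilde{F}_1$ and $\tilde{M}=v\tilde{M}_1$ (both $\tilde{F},\tilde{M}$ vanish on $\gamma$), we get $\bm{V}_2'=v\bm{W}_2$ with $\bm{W}_2:=\big(-(\tilde{N}_1-\kappa_2\tilde{G}),\ \tilde{M}_1-\kappa_2\tilde{F}_1\big)$ a $C^\infty$ field whose first entry at $p$ is $-(r_b/3-\kappa_\nu)\neq 0$; thus $\bm{W}_2(p)$ is transverse to $\partial_v$, so $\bm{W}_1(p),\bm{W}_2(p)$ are linearly independent and $\bm{V}_j$ is parallel to $\bm{W}_j$.

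For (3): since $r_b/3-\kappa_\nu\neq 0$ on $\gamma$ we have $\Gamma>0$ there (no umbilic points), and $\sqrt{\Gamma}\geq\tfrac12|r_b/3-\kappa_\nu|$ on $\gamma$, with equality exactly where $\kappa_t=0$. From $\tilde{L}-\kappa_j\tilde{E}=\tfrac12(\kappa_\nu-r_b/3)+(-1)^j\sqrt{\Gamma}$ on $\gamma$ it follows that one of $\bm{V}_1,\bm{V}_2$, say $\bm{V}_a$, is nonzero at every point of $\gamma$---so $\bm{V}_a$ restricted to $\gamma$ is parallel to $\partial_v$, transverse to $\gamma'=\partial_u$, and its integral curves cross $\gamma$---while the other, $\bm{V}_b$, vanishes at a point of $\gamma$ if and only if $\kappa_t$ vanishes there. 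Hence $\gamma$ can be a curvature line only through $\bm{V}_b$. If $\kappa_t\not\equiv 0$ on $\gamma$, then at some point $\bm{V}_b\neq 0$ is parallel to $\partial_v$, transverse to $\gamma$, so $\gamma$ is not a curvature line---this is the ``only if''. Conversely, if $\kappa_t\equiv 0$ on $\gamma$, then $\bm{V}_b\equiv 0$ on $\gamma$, the corresponding principal curvature equals $\kappa_\nu$ on $\gamma$, and the reduced field $\bm{W}_b$ (constructed as in (2)) carries the principal direction along $\gamma$; the remaining task is to show its $\partial_v$-entry $\tilde{M}_1-\kappa_\nu\tilde{F}_1=\tilde{M}_v(u,0)-\kappa_\nu\tilde{F}_v(u,0)$ vanishes on $\gamma$. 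For this I would differentiate $\tilde{M}=-\inner{h}{\nu_u}$ in $v$ and use the pure-frontal relation $\nu_v\equiv 0$ along $\gamma$ (hence $\nu_{uv}\equiv 0$ on $\gamma$) to get $\tilde{M}_v(u,0)=-\inner{h_v}{\nu_u}(u,0)$, then substitute the along-$\gamma$ form $h_v=\tilde{F}_vf_u+(\tilde{G}_v/2)h$ of \eqref{eq:fvfvgv} (valid for a pure-frontal $\gamma$) and use $\inner{f_u}{\nu_u}=-\tilde{L}=-\kappa_\nu$, $\inner{h}{\nu_u}=-\tilde{M}=-\kappa_t=0$, obtaining $\tilde{M}_v(u,0)=\kappa_\nu\tilde{F}_v(u,0)$. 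Then $\bm{W}_b$ restricted to $\gamma$ equals $\big(-(r_b/3-\kappa_\nu),\ 0\big)$, a nonzero multiple of $\gamma'$, so $\gamma$ is a curvature line.

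The step I expect to be the real obstacle is the last one in (3): one must notice that the elementary formula \eqref{eq:v11} for $\bm{V}_j$ collapses to $0$ along $\gamma$ precisely when $\kappa_t=0$, replace it by the companion solution from the other row of the Weingarten system \eqref{eq:p-dir2}, divide out the common factor $v$, and then verify---using the pure-frontal condition $\nu_v\equiv 0$ and the Frenet-type identity \eqref{eq:fvfvgv} for $h_v$---that the resulting $C^\infty$ field is exactly tangent to $\gamma$. The remaining parts reduce to careful bookkeeping of the $v$-orders of the entries of \eqref{eq:p-dir2} together with the elementary sign analysis of $\tfrac12(\kappa_\nu-r_b/3)\pm\sqrt{\Gamma}$ along $\gamma$.
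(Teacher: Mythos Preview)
Your argument is correct and follows essentially the same route as the paper's proof: you use the orthogonal adapted coordinates, the explicit formula \eqref{eq:v11} together with \eqref{eq:lkemkf} along $\gamma$, and for the degenerate index you pass to the ``companion'' solution $\big(-v(\tilde{N}_1-\kappa_j\tilde{G}),\,\tilde{M}-\kappa_j\tilde{F}\big)$ of \eqref{eq:p-dir2} and divide by $v$---exactly the paper's $\tilde{\bm W}_2$. The computation $\tilde M_v=\tilde F_1\tilde L$ via $\nu_{uv}=0$ and the $h_v$-identity is identical to the paper's. Your write-up is in fact a bit more careful than the paper's in two places: you justify explicitly that the companion solution is parallel to $\bm V_2$ (via the vanishing of the reduced characteristic polynomial), and you treat the ``only if'' direction of (3) separately, whereas the paper leaves it implicit in (1).
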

Here, a curve $\gamma$ is a {\it curvature line} if 
$\gamma'$ is a principal vector.
\begin{proof}
Since $p$ is not an umbilic point, $\kappa_j$ $(j=1,2)$ is
a $C^\infty$ function, so do ${\bm{V}}_j$. 
By \eqref{eq:lkemkf}, if $\kappa_t\ne0$, then
both ${\bm{V}}_j\ne0$ $(j=1,2)$,
and they are parallel to the null vector $\eta$ at $p$
by \eqref{eq:v11}.
Therefore we get the first assertion. 

We show (2).
Since $p$ is not an umbilic point, 
one of 
$\tilde L-\kappa_j\tilde E$ $(j=1,2)$ is not zero.
We assume $\tilde L-\kappa_1\tilde E\ne0$ at $p$,
namely $r_b/3-\kappa_\nu>0$.
Then ${\bm{V}}_1\ne0$
We take an orthogonal adapted coordinate system $(U;u,v)$ around $p$. 
Then by the assumption, there exist functions $\tilde{F}_1$ and 
$\tilde{M}_1$ on $U$ such that 
$\tilde{F}=v\tilde{F}_1$ and $\tilde{M}=v\tilde{M}_1$ hold. 
We set
$
\tilde{\bm{W}}_2=
(-(\tilde N_1-\kappa_2\tilde G),\tilde M_1-\kappa_2\tilde F_1).
$
Then $\bm{W}_2=
(-v(\tilde N_1-\kappa_2\tilde G),\tilde M-\kappa_2\tilde F)$
satisfies 
$\bm{W}_2=v\tilde{\bm{W}}_2$, and by the assumption
$\tilde L-\kappa_1\tilde E\ne0$,
it holds that $\tilde N_1-\kappa_2\tilde G\ne0$, namely
$\tilde{\bm{W}}_2\ne0$.
Then the pair $\bm{V}_1$ and $\tilde{\bm{W}}_2$ is 
the desired one.

We show (3).
We assume $\kappa_t=0$ on $\gamma$. We take 
$\tilde{\bm{W}}_2$ in the proof of (2).
Since the $u$-axis is a set of pure-frontal singular point,
$\nu_v=0$ along the $u$-axis. Thus $\nu_{uv}=0$, and we see
$\tilde M_v=-\inner{h_v}{\nu_u}$ on the $u$-axis.
By \eqref{eq:fvfvgv} and $\inner{h}{\nu_u}=0$,
we see $\tilde M_v=\tilde F_1\tilde L$ on the $u$-axis.
Thus by \eqref{eq:invariants} and \eqref{eq:lkemkf},
$\tilde{\bm{W}}_2=
(r_b/3-\kappa_\nu,0)$ when $r_b/3-\kappa_\nu>0$.
If $r_b/3-\kappa_\nu<0$, then interchange $j=1,2$ into $j=2,1$.
\end{proof}
We remark that a similar result for a cuspidal edge is known (see \cite{is-take,tera}). 

Under the condition as in Proposition \ref{prop:p-dir} (2), 
one can take a coordinate system 
$(x_1,x_2)$ on a neighborhood of $p$ such that 
$\partial_{x_i}$ $(i=1,2)$ are parallel to $\bm{V}_i$ (or $\bm{W}_i$) 
by the lemma in \cite[page 182]{kob-nom}. 
We may think of such a coordinate system 
as a {\it curvature line coordinate system} (cf. Definition \ref{def:curvline-frame}).

\begin{corollary}\label{cor:frametarget}
There exist $C^\infty$ maps 
$\bm{e}_j\colon U\to\R^3\setminus\{0\}$ $(j=1,2)$ 
such that $\bm{e}_i$ and $df(\bm{V}_i)$ 
are linearly dependent for $i=1,2$, 
$\inner{\bm{e}_j}{\bm{e}_k}=\delta_{jk}$ $(1\leq j,k\leq2)$, 
where $\delta_{jk}$ is the Kronecker delta. 
\end{corollary}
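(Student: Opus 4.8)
The plan is to construct the orthonormal pair $\{\bm{e}_1,\bm{e}_2\}$ directly from the vector fields $df(\bm{V}_j)$ by a Gram--Schmidt-type procedure, but with care taken at the singular locus $\{v=0\}$, where $df(\bm{V}_j)$ vanishes (as is visible from \eqref{eq:p-dir-diff}, which carries an overall factor $v$). First I would pass to the reduced vector fields: from \eqref{eq:p-dir-diff} we have $df(\bm{V}_j)=v\,\widehat{\bm{w}}_j$ where $\widehat{\bm{w}}_j=-(\tilde{M}-\kappa_j\tilde{F})f_u+(\tilde{L}-\kappa_j\tilde{E})h$ is a $C^\infty$ map $U\to\R^3$, and $\widehat{\bm{w}}_j\ne0$ on all of $U$ because $\bm{V}_j\ne0$ (by Proposition \ref{prop:p-dir}, using that $p$ is not umbilic — note that at points where one $\bm{V}_j$ could a priori degenerate one replaces it with the corresponding $\bm{W}_j$ of Proposition \ref{prop:p-dir}(2), which is again $C^\infty$ and nowhere zero). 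So on $U\setminus\{v=0\}$ the direction of $df(\bm{V}_j)$ coincides with that of $\widehat{\bm{w}}_j$, and $\widehat{\bm{w}}_j$ extends this direction field smoothly across the singular curve.

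Next I would set $\bm{e}_1=\widehat{\bm{w}}_1/|\widehat{\bm{w}}_1|$, which is $C^\infty$ and unit since $\widehat{\bm{w}}_1$ is nowhere zero. For $\bm{e}_2$ I would use the ambient normal $\nu$: define $\bm{e}_2=\nu\times\bm{e}_1$. Then $\bm{e}_2$ is automatically $C^\infty$, unit, and orthogonal to $\bm{e}_1$, so $\inner{\bm{e}_j}{\bm{e}_k}=\delta_{jk}$ holds by construction. The one thing left to check is that $\bm{e}_2$ is linearly dependent with $df(\bm{V}_2)$, equivalently with $\widehat{\bm{w}}_2$, on $U$. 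On $U\setminus\{v=0\}$ this follows from Lemma \ref{lem:p-vect}: there $df(\bm{V}_1)\perp df(\bm{V}_2)$, hence $\widehat{\bm{w}}_1\perp\widehat{\bm{w}}_2$; moreover both $\widehat{\bm{w}}_j$ are tangent to $f$, hence both orthogonal to $\nu$, so $\widehat{\bm{w}}_2$ lies in the plane $\{\nu,\bm{e}_1\}^{\perp}$ spanned by $\nu\times\bm{e}_1=\bm{e}_2$; therefore $\widehat{\bm{w}}_2\parallel\bm{e}_2$ on the dense open set $U\setminus\{v=0\}$, and by continuity on all of $U$.

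The main obstacle I anticipate is the behavior at the singular curve, where two things must be reconciled: $df(\bm{V}_j)$ genuinely vanishes there, and $\widehat{\bm{w}}_1,\widehat{\bm{w}}_2$ need not be orthogonal on $\{v=0\}$ itself (the orthogonality relation from Lemma \ref{lem:p-vect} is proved only on $U\setminus\{v=0\}$). The resolution is precisely the argument above: one never asks $\bm{e}_2$ to be obtained from $\widehat{\bm{w}}_2$ by normalization — that would fail to be smooth or could even be discontinuous if $\widehat{\bm{w}}_1,\widehat{\bm{w}}_2$ become parallel on the singular set — but instead builds $\bm{e}_2$ from $\nu$ and $\bm{e}_1$, which is manifestly smooth, and only then checks the linear-dependence condition, which is a closed condition and hence propagates from the dense open set to the singular curve. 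A secondary technical point is to ensure the construction is global on a single neighborhood $U$: this is fine provided $\widehat{\bm{w}}_1$ (or the appropriate $\bm{W}_1$) is nowhere zero on $U$, which one arranges by shrinking $U$ around $p$ as in Proposition \ref{prop:p-dir}. Assembling these observations gives the two desired maps $\bm{e}_1,\bm{e}_2$.
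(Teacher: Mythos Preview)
Your proposal is correct and close in spirit to the paper's proof, but with a genuine difference in how $\bm{e}_2$ is produced. The paper simply sets
\[
\bm{e}_j=\dfrac{-(\tilde{M}-\kappa_j\tilde{F})f_u+(\tilde{L}-\kappa_j\tilde{E})h}
{|-(\tilde{M}-\kappa_j\tilde{F})f_u+(\tilde{L}-\kappa_j\tilde{E})h|}
\]
for \emph{both} $j$, i.e.\ it normalizes both of your reduced fields $\widehat{\bm{w}}_j$, and then remarks that the remaining cases (where one of them vanishes) are handled ``by similar calculations'' using the alternate representatives from Proposition~\ref{prop:p-dir}(2). Orthogonality of the resulting $\bm{e}_1,\bm{e}_2$ is inherited from Lemma~\ref{lem:p-vect}; in fact your worry that $\widehat{\bm{w}}_1\perp\widehat{\bm{w}}_2$ might fail on $\{v=0\}$ is unfounded: in the proof of that lemma the factor $v^2$ splits off and the bracketed expression vanishes identically on $U$ by \eqref{eq:K-H1}, so $\langle\widehat{\bm{w}}_1,\widehat{\bm{w}}_2\rangle=0$ holds everywhere, not just on the regular set.

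Your route---normalize only $\widehat{\bm{w}}_1$ and set $\bm{e}_2=\nu\times\bm{e}_1$---trades one verification for another: orthonormality is now automatic and no case split is needed, but you must then check the linear dependence of $\bm{e}_2$ with $df(\bm{V}_2)$, which you do correctly (on $\{v=0\}$ it is trivial since $df(\bm{V}_2)=v\,\widehat{\bm{w}}_2=0$, and on the regular set it follows from Lemma~\ref{lem:p-vect} and $\widehat{\bm{w}}_2\perp\nu$). Either approach is valid; yours is slightly more uniform, while the paper's makes the parallelism $\bm{e}_j\parallel df(\bm{V}_j)$ transparent for both indices from the outset.
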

\begin{proof}
By \eqref{eq:p-dir-diff},
setting
$$
\bm{e}_j=
\dfrac{
-(\tilde{M}-\kappa_j\tilde{F})f_u+(\tilde{L}-\kappa_j\tilde{E})h
}
{|-(\tilde{M}-\kappa_j\tilde{F})f_u+(\tilde{L}-\kappa_j\tilde{E})h|},
$$
we get the assertion for the case of 
$\bm{V}_1=(-v(\tilde{M}-\kappa_1\tilde{F}),\tilde{L}-\kappa_1\tilde{E})
\ne0$ and
$\bm{V}_2=(-v(\tilde{M}-\kappa_2\tilde{F}),\tilde{L}-\kappa_2\tilde{E})
\ne0$.
We can show other cases by similar calculations.
\end{proof}
See \cite{bruce-taribin,bruce-tari,davydov,sotogar} for 
approaches by binary differential equations
for lines of curvatures.

\subsection{Umbilic points}\label{sec:umbpure}
We next focus on umbilic points. 
Let $f$ be a frontal on a neighborhood $U$ of 
a pure-frontal singular point $p$.
The function $\Gamma=H^2-K$ behaves as follows near $p\in S(f)$.

\begin{theorem}\label{thm:isolated-umbilic}
If $p\in S(f)$ is both a pure-frontal singular point and an umbilic point,
then $p$ is a critical point of $\Gamma=H^2-K$.
Moreover, 
if $f$ satisfies 
$r_c(p)\neq0$, and either $3\kappa_\nu'(p)\neq r_b'(p)$ or $\kappa_t'(p)\neq0$, 
then $\Gamma$ is a Morse function with index $0$ or $2$ at $p$, in particular, 
$p$ is an isolated umbilic point of $f$. 
\end{theorem}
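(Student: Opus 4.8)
The plan is to reduce everything to explicit computations in an orthogonal adapted coordinate system $(U;u,v)$ centered at $p$, using the formula $\Gamma = \tfrac14(\tilde N_1-\tilde L)^2 + \tilde M^2$ valid along the $u$-axis together with the fact, established in Theorem~\ref{thm:principal1}, that the hypothesis $\Gamma(p)=0$ forces $(\tfrac13 r_b - \kappa_\nu)(p) = 0$ and $\kappa_t(p)=0$, i.e.\ $(\tilde N_1 - \tilde L)(p) = 0$ and $\tilde M(p)=0$. First I would show $p$ is a critical point of $\Gamma$: since $\Gamma$ is a nonnegative $C^\infty$ function (as noted after \eqref{eq:umbilicity-25}) and $\Gamma(p)=0$, the point $p$ is a global minimum, hence a critical point. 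That takes care of the first assertion without any computation.

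For the Morse part, the key step is to compute the Hessian of $\Gamma$ at $p$. Writing $A = \tfrac12(\tilde N_1 - \tilde L)$ and $B = \tilde M$, we have $\Gamma = A^2 + B^2$, so at a point where $A=B=0$ the Hessian is $\hess \Gamma|_p = 2(dA|_p \otimes dA|_p + dB|_p \otimes dB|_p)$, a sum of two rank-one positive semidefinite forms; thus $\Gamma$ is Morse at $p$ (of index $0$, or index $2$ after a sign — but $\Gamma\ge 0$ forces index $0$) precisely when $dA|_p$ and $dB|_p$ are linearly independent in $T_p^*U$. So the whole theorem comes down to identifying the $u$- and $v$-derivatives of $A$ and $B$ at $p$ in terms of the stated invariants. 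The $u$-derivatives along the singular curve are governed by $\kappa_\nu', r_b', \kappa_t'$: differentiating $A(u,0) = \tfrac12\big(\tilde N_1(u,0) - \tilde L(u,0)\big) = \tfrac16 r_b(u) - \tfrac12\kappa_\nu(u)$ (using \eqref{eq:invariants}, $\tilde N_v = \tilde N_1$ on the $u$-axis, and $r_b = 3\tilde N_v$) gives $A_u(p) = \tfrac16 r_b'(p) - \tfrac12 \kappa_\nu'(p)$, and $B_u(p) = \tilde M_u(p) = \kappa_t'(p)$. The $v$-derivatives are where $r_c$ enters: from Lemma~\ref{prop:invariants} and Corollary~\ref{cor:25-invariants}, $r_c = 12(\tilde N_{vv} - 4\tilde F_v \tilde M - 2\tilde G_v \tilde N_v)$ at $p$, and since $\tilde N = v\tilde N_1$ we have $\tilde N_{vv}(p) = 2\tilde N_{1,v}(p)$; combined with $\tilde M(p)=0$ this yields a linear relation expressing $A_v(p)$ (and separately $B_v(p) = \tilde M_v(p)$, which by the computation $\tilde M_v = \tilde F_1 \tilde L$ on the $u$-axis from the proof of Proposition~\ref{prop:p-dir}(3) is controlled) in terms of $r_c(p)$ and already-known quantities.

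Assembling these, the Hessian is nondegenerate iff the $2\times 2$ matrix with rows $(A_u(p), A_v(p))$ and $(B_u(p), B_v(p))$ is invertible. The determinant will split as $A_u B_v - A_v B_u$; after substituting the expressions above one sees that the hypothesis $r_c(p)\ne 0$ makes the "$v$-column" nonzero, while the hypothesis "$3\kappa_\nu'(p) \ne r_b'(p)$ or $\kappa_t'(p)\ne 0$" makes the "$u$-column" (i.e.\ $(A_u(p), B_u(p))$) nonzero, and one checks the combination of these two forces $dA|_p$ and $dB|_p$ to be independent — the cross term $\tilde F_v \tilde M = 0$ at $p$ simplifies matters. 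Since $\Gamma\ge 0$, a nondegenerate critical point must have index $0$, hence be an isolated local minimum, so $\{\Gamma = 0\}$ is locally $\{p\}$ and $p$ is an isolated umbilic. I expect the main obstacle to be the bookkeeping in the $v$-derivative computation: one must carefully track how $\tilde N_1$, its derivatives, and $\tilde F_v$, $\tilde G_v$ relate at $p$ — in particular re-deriving $\tilde N_{vv}(p) = 2\tilde N_{1,v}(p)$ and the formula for $\tilde M_v(p)$ — so that the determinant really does factor cleanly into the product of a "$u$-part" and a "$v$-part" matching the two stated hypotheses, rather than some messier nondegeneracy condition.
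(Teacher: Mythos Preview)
Your first step---observing that $\Gamma\ge 0$ with $\Gamma(p)=0$ forces $p$ to be a critical point---is correct and in fact cleaner than the paper's direct computation of $\Gamma_u(p)$ and $\Gamma_v(p)$.

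The gap is in the Morse part. You set $A=\tfrac12(\tilde N_1-\tilde L)$ and $B=\tilde M$ and assert $\Gamma=A^2+B^2$, but equation \eqref{eq:umbilicity-25} only gives this identity \emph{along the $u$-axis}: away from $\{v=0\}$ the coefficients $\tilde E,\tilde F,\tilde G$ are not $1,0,1$, and the full expression for $\Gamma$ (coming from \eqref{eq:K-H1} or \eqref{eq:umbilicity2}) has extra terms. Consequently the Hessian of $\Gamma$ at $p$ is \emph{not} $2(dA|_p\otimes dA|_p+dB|_p\otimes dB|_p)$: the $u$-derivatives $A_u(p),B_u(p)$ agree with the correct quantities, but the $v$-derivatives $A_v(p),B_v(p)$ do not. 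For instance you compute $B_v(p)=\tilde M_v(p)=\tilde F_v(p)\tilde L(p)$, which is generally nonzero, whereas the relevant ``square root'' of $\Gamma$ in the $\tilde M$-slot (from \eqref{eq:umbilicity2}, after dividing out $v^2$) is $\Psi=(\tilde E\tilde M-\tilde F\tilde L)/\sqrt{\tilde E(\tilde E\tilde G-\tilde F^2)}$, and one checks $\Psi_v(p)=\tilde M_v(p)-\tilde F_v(p)\tilde L(p)=0$. This discrepancy changes the Jacobian determinant and hence the nondegeneracy condition you obtain.

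There are two ways to repair this. One is to use the honest global decomposition $\Gamma=\Phi^2+\Psi^2$ furnished by \eqref{eq:umbilicity2} (with $\tilde N=v\tilde N_1$ substituted) and compute $d\Phi|_p,d\Psi|_p$; the extra $\tilde E,\tilde F,\tilde G$ factors contribute to the $v$-derivatives and the cancellations with $\tilde G_v,\tilde F_v$ terms must be tracked. The other is the paper's route: compute $\Gamma_v,\Gamma_{uv},\Gamma_{vv}$ directly from $\Gamma=H^2-K$, using the known formulas $H_v=r_c/48$ and $K_v=\kappa_\nu r_c/24$ along the $u$-axis (from \cite{honda-saji}), together with auxiliary identities such as $\tilde E_v=\tilde L_v=0$ and $\tilde M_v=\tilde F_v\tilde L$ at $p$ to handle $\Gamma_{vv}$. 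Either way, the bookkeeping you anticipated in your last paragraph is exactly where the argument currently breaks, not merely where it gets tedious.
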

In the second assumption, since $r_c(p)\neq0$ is satisfied,
$f$ at $p$ is a $5/2$-cuspidal edge 
(\cite[Fact 2.1, Proposition 3.8]{honda-saji}). 
\begin{proof}
We take an orthogonal adapted coordinate system $(U;u,v)$
around $p$ with $\det(f_u,f_{vv},\nu)(u,0)>0$. 
On the $u$-axis, $H$, $K$ and $\Gamma$ are written as 
$$H=\dfrac{\kappa_\nu}{2}+\dfrac{r_b}{6},\quad 
K=\dfrac{\kappa_\nu r_b}{3}-\kappa_t^2,\quad 
\Gamma=\dfrac{1}{2}\left(\kappa_\nu-\dfrac{r_b}{3}\right)^2+\kappa_t^2.$$
Thus we see that 
$H_u={\kappa_\nu'}/{2}+{r_b'}/{6}$ and
$$
K_u
=\dfrac{\kappa_\nu' r_b+\kappa_\nu r_b'}{3}-2\kappa_t\kappa_t',\quad 
\Gamma_u=\dfrac{1}{2}
\left(\kappa_\nu-\dfrac{r_b}{3}\right)
\left(\kappa_\nu'-\dfrac{r_b'}{3}\right)+2\kappa_t\kappa_t'$$
at $p$. 
If $p$ is an umbilical point of $f$, then $3\kappa_\nu(p)=r_b(p)$ and $\kappa_t(p)=0$. 
Thus $\Gamma_u(p)=0$. 
On the other hand, it is known that
$H_v={r_c}/{48}$ and 
$K_v=r_{\Pi}/{24}$, where $r_{\Pi}=\kappa_\nu r_c$
hold along the $u$-axis (see \cite[Lemma 4.3]{honda-saji}). 
Therefore we have 
$$\Gamma_v=\left(\kappa_\nu+\dfrac{r_b}{3}\right)\dfrac{r_c}{48}-\dfrac{r_\Pi}{24}=0$$
at $p$ since $3\kappa_\nu=r_b$ holds at $p$. 
Thus we get the first assertion.
By direct calculations, it follows that $\Gamma_{uu}$ is
$$
\left(\kappa_\nu'-\dfrac{r_b'}{3}\right)^2
+\left(\kappa_\nu-\dfrac{r_b}{3}\right)
\left(\kappa_\nu''-\dfrac{r_b''}{3}\right)+2(\kappa_t')^2+2\kappa_t\kappa_t''\\
=\left(\kappa_\nu'-\dfrac{r_b'}{3}\right)^2+2(\kappa_t')^2
$$
at $p$ since $2\Gamma=(\kappa_\nu-r_b/3)^2+2\kappa_t^2$ along the $u$-axis, 
$3\kappa_\nu=r_b$ and $\kappa_t=0$ at $p$. 
We next consider $H_{uv}$ and $K_{uv}$. 
Since $H_v=r_c/48$ and $K_v=r_{\Pi}/24$ hold along the $u$-axis, 
$H_{uv}=H_{vu}$ and $K_{uv}=K_{vu}$ are
$H_{uv}={r_c'}/{48}$ and $K_{uv}={r_\Pi'}/{24}$ 
at $p$. 
Thus 
\begin{align*}
\Gamma_{uv}&=2H_uH_v+2HH_{uv}-K_{uv}
=\left(\kappa_\nu'+\dfrac{r_b'}{3}\right)\dfrac{r_c}{48}+\left(\kappa_\nu+\dfrac{r_b}{3}\right)\dfrac{r_c'}{48}-\dfrac{r_\Pi'}{24}\\
&=\left(\kappa_\nu'+\dfrac{r_b'}{3}\right)\dfrac{r_c}{48}+\dfrac{\kappa_\nu r_c'}{24}-\dfrac{\kappa_\nu' r_c+\kappa_\nu r_c'}{24}
=\dfrac{r_c}{48}\left(\dfrac{r_b'}{3}-\kappa_\nu'\right)
\end{align*}
holds at $p$ because the relation $3\kappa_\nu=r_b$ holds at $p$. 
To see $\Gamma_{vv}$ at $p$, we give some calculations in advance.
Since $f_v(u,0)=0$, we have $\tilde{E}_v=2\inner{f_u}{f_{uv}}=0$ along the $u$-axis. 
On the other hand, we see that $\tilde{L}_v=-\inner{f_u}{\nu_u}=0$ at $p$ because $f_{uv}=\nu_{uv}=0$ along the $u$-axis. 
Further, $\tilde{M}=-\inner{h}{\nu_u}=0$ and $\tilde{M}_v=\tilde{F}_v\tilde{L}$ at $p$ since $\tilde{M}(p)=\kappa_t(p)=0$ by Lemma \ref{prop:invariants} 
and $h_v=\tilde{F}_vf_u+(\tilde{G}_v/2)h$ at $p$. 
Using these relations and $\tilde{L}=\tilde{N}_1$, we see that 
\begin{equation*}
2HH_{vv}-K_{vv}=2\tilde{F}_v^2\tilde{N}_1^2+2\tilde{M}_v^2-4\tilde{F}_v\tilde{M}_v\tilde{N}_1
=4\tilde{F}_v^2\tilde{N}_1^2-4\tilde{F}_v^2\tilde{N}_1^2=0
\end{equation*}
holds at $p$.
%By the condition that a point $p$ is an umbilic point of $f$, $\tilde{L}=\tilde{N}_1$ holds at $p$ (see Proposition \ref{prop:invariants} again). 
Thus 
$\Gamma_{vv}(p)=2H_v(p)^2=(r_c(p)/{24})^2/2$ holds. 
Hence the 
%Hessian matrix $\hess(\Gamma)(p)$ of $\Gamma$ at $p$ and its determinant are
determinant of the Hesse matrix $\hess(\Gamma)(p)$ of $\Gamma$ at $p$ is
%\begin{align*}
%\hess(\Gamma)(p)&=\begin{pmatrix} 
%\left(\kappa_\nu'(p)-\dfrac{r_b'(p)}{3}\right)^2+2(\kappa_t'(p))^2 & \dfrac{r_c(p)}{48}\left(\dfrac{r_b'(p)}{3}-\kappa_\nu'(p)\right) \\
%\dfrac{r_c(p)}{48}\left(\dfrac{r_b'(p)}{3}-\kappa_\nu'(p)\right) & \dfrac{1}{2}\left(\dfrac{r_c(p)}{24}\right)^2\end{pmatrix},
%\\
%\det\hess(\Gamma)(p)&=\left(\dfrac{r_c(p)}{24}\right)^2
%\left(\dfrac{1}{4}\left(\kappa_\nu'(p)
%-\dfrac{r_b'(p)}{3}\right)^2+\kappa_t'(p)^2\right)>0.
%\end{align*}
$$
\det\hess(\Gamma)(p)=\left(\dfrac{r_c(p)}{24}\right)^2
\left(\dfrac{1}{4}\left(\kappa_\nu'(p)
-\dfrac{r_b'(p)}{3}\right)^2+\kappa_t'(p)^2\right)>0.
$$
Hence we have the assertion.
\end{proof}

%%%%%SECTION 4%%%%%
\section{Principal curvatures near $k$-non-front singular points}
Let $f$ be a frontal and $p$ a $k$-non-front 
singular point of $f$.
\subsection{Behavior of principal curvatures near $k$-non-front singular points}
The following holds for principal curvatures 
defined in \eqref{eq:25-principal}
of a frontal near a $k$-non-front singular point $p$. 
A function $x$ is said to have {\it finite multiplicity at\/} $0$
if there exist an integer $l\geq2$ such that
$x'(0)=\cdots=x^{(l-1)}=0$ and $x^{(l)}\ne0$ at $0$.
\begin{theorem}\label{thm:principal-k-frontal}
Let $f:U\to\R^3$ be a frontal and $p\in U$ a singular point of the first kind of $f$,
where $U$ is sufficient small.
Let $\gamma$ be a singular curve through $p=\gamma(0)$. 
\begin{enumerate}
\item\label{assertion1-conti} 
If $p$ is a $2k$-non-front $(k\geq1)$ singular point of $f$, 
then one of principal curvatures can be extended as a continuous function near $p$ 
and another is unbounded near $p$.
\item\label{assertion2-unbdd} If $p$ is a $(2k+1)$-non-front $(k\geq0)$ 
singular point of $f$, and 
$\kappa_\nu$ does not vanish at $0$, or has a finite multiplicity at $0$,
$\kappa_1$ is coutinuous on $U\setminus \gamma(\{t\leq 0\})$ but unbounded
at $\gamma(\{t\leq 0\})$,
and
$\kappa_2$ is coutinuous on $U\setminus \gamma(\{t\geq 0\})$ but unbounded
at $\gamma(\{t\geq 0\})$,
where $\kappa_i$ $(i=1,2)$ are the principal curvature of $f$.
\end{enumerate}
\end{theorem}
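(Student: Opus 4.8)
The plan is to reduce both assertions to an analysis of the characteristic equation of the principal curvatures in an orthogonal adapted coordinate system, using Lemma~\ref{prop:invariants} and the identities \eqref{eq:umbilicity1}--\eqref{eq:umbilicity2} for $\Gamma=H^2-K$. First I would fix an orthogonal adapted coordinate system $(U;u,v)$ centered at $p$ with $\det(f_u,f_{vv},\nu)(u,0)>0$, so that $\gamma(t)=(t,0)$ and, by Lemma~\ref{prop:invariants}, $\kappa_\nu(u)=\tilde L(u,0)$, $\kappa_c(u)=2\tilde N(u,0)$, $\kappa_t(u)=\tilde M(u,0)$. A short computation with \eqref{eq:weingarten} shows that in these coordinates the function $\psi$ of \eqref{eq:delta} equals $\tfrac12\kappa_c$; hence ``$p$ is a $k$-non-front singular point'' is equivalent to ``$\kappa_c(u)=2\tilde N(u,0)$ has a zero of order exactly $k$ at $u=0$''. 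In particular $\kappa_c$ vanishes to the even order $2k$ in case~(1), hence keeps a fixed sign near $u=0$, and to the odd order $2k+1$ in case~(2), hence changes sign at $u=0$; this parity dichotomy is what separates the two assertions.

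Next, from $\det(\mathrm{II}-\kappa\,\mathrm{I})=0$ together with \eqref{eq:difference}, cancelling one factor of $v$ shows that the principal curvatures $\kappa_1\ge\kappa_2$ are the roots of
\[
v(\tilde E\tilde G-\tilde F^2)\,\kappa^2-q\,\kappa+(\tilde L\tilde N-v\tilde M^2)=0,\qquad q:=\tilde E\tilde N+v(\tilde G\tilde L-2\tilde F\tilde M),
\]
so $\kappa_{1,2}=\bigl(q\pm\sgn(v)\sqrt{D}\,\bigr)\big/\bigl(2v(\tilde E\tilde G-\tilde F^2)\bigr)$ with $D:=q^2-4v(\tilde E\tilde G-\tilde F^2)(\tilde L\tilde N-v\tilde M^2)$. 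By \eqref{eq:umbilicity1}, $D=4v^2(\tilde E\tilde G-\tilde F^2)^2\Gamma\ge0$, and by \eqref{eq:umbilicity2}, $\sqrt{D}=2(\tilde E\tilde G-\tilde F^2)\sqrt{A^2+B^2}$ for $C^\infty$ functions $A,B$ with $A\equiv0$ on $\{v=0\}$ and $B|_{v=0}=\tilde E\tilde N|_{v=0}/2=\kappa_c(u)/4$. The facts I would lean on are $q|_{v=0}=\kappa_c(u)/2$, $(\tilde L\tilde N-v\tilde M^2)|_{v=0}=\kappa_\nu(u)\kappa_c(u)/2$, and $H=q/\bigl(2v(\tilde E\tilde G-\tilde F^2)\bigr)$.

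The unbounded principal curvature is then essentially formal: since the leading coefficient $v(\tilde E\tilde G-\tilde F^2)$ vanishes on $\{v=0\}$ while $q$ does not near $\gamma(u_0)$ for $u_0\neq0$ (as $\kappa_c(u_0)\neq0$), one root tends to $\pm\infty$ upon transversal approach to $\gamma(u_0)$; examining the sign of $H$ on each side of $\gamma$ (governed by $\sgn v$ and $\sgn\kappa_c$) identifies which of $\kappa_1,\kappa_2$ blows up on each region, and in case~(2) the sign change of $\kappa_c$ at $p$ is exactly what produces the asymmetric behaviour attached to $\gamma(\{t\le0\})$ and $\gamma(\{t\ge0\})$. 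For the bounded branch I rationalize: away from $\{v=0\}$ the finite root equals $\dfrac{2(\tilde L\tilde N-v\tilde M^2)}{q+\sgn(q)\sqrt{D}}$, whose limit along $\{v=0\}$ at $\gamma(u_0)$ ($u_0\neq0$) is $\kappa_\nu(u_0)$ by the above identities; it then remains to show this branch extends continuously up to $\gamma$ near $p$. For that I would factor $\kappa_c(u)=u^k m(u)$ ($m(0)\neq0$) out of numerator and denominator, use the sum-of-squares form $v^2\Gamma=A^2+B^2$ with $B|_{v=0}=\kappa_c/4$ to obtain the uniform estimate $\sqrt{D}-|\kappa_c(u)|/2=O(|v|)$ near $p$, and invoke the hypothesis on $\kappa_\nu$ in case~(2) (nonvanishing or finite multiplicity, so that $(\tilde L\tilde N-v\tilde M^2)|_{v=0}=\kappa_\nu\kappa_c/2$ has finite order of vanishing) to exclude the degenerate cancellation in which the denominator would vanish faster than the numerator at $p$.

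The step I expect to be the real obstacle is this last one — continuity of the bounded branch up to the singular curve near $p$, i.e. the $0/0$ behaviour at $p$ itself — together with, in case~(2), the careful bookkeeping of which of $\kappa_1$ and $\kappa_2$ remains bounded on each of the regions $\{\pm v>0\}\cap\{\pm u>0\}$ once the signs of $v$ and of $\kappa_c$ are combined. Everything else (the characteristic equation, the blow-up of the other branch, the value $\kappa_\nu$ of the bounded branch along $\gamma$) follows rather formally from \eqref{eq:difference}, Lemma~\ref{prop:invariants}, and \eqref{eq:umbilicity1}--\eqref{eq:umbilicity2}.
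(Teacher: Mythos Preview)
Your approach is essentially the same as the paper's: your $q$ is the paper's $A$, your $\sqrt{D}$ is the paper's $B$, and the rationalized expression $\dfrac{2(\tilde L\tilde N-v\tilde M^2)}{q+\sgn(q)\sqrt{D}}$ is exactly the paper's $\dfrac{2(\tilde L\tilde N-v\tilde M^2)}{A\pm B}$. The paper likewise factors $\kappa_c(u)=u^{k}\tilde\kappa_c(u)$ with $\tilde\kappa_c(0)\neq0$, evaluates numerator and denominator along $\{v=0\}$, and reads off which sign gives a well-defined (hence continuous) quotient from the parity of $k$.

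Two small remarks. First, your claimed factorization $\sqrt{D}=2(\tilde E\tilde G-\tilde F^2)\sqrt{A^2+B^2}$ is not literally what \eqref{eq:umbilicity2} says (the first summand carries a factor $(\tilde E\tilde G-\tilde F^2)/\tilde E$, not $(\tilde E\tilde G-\tilde F^2)^2$); but the only consequence you use, namely $\sqrt{D}\big|_{v=0}=|q|\big|_{v=0}=|\kappa_c|/2$, follows directly from \eqref{eq:umbilicity1} by setting $v=0$, so nothing is lost. Second, for continuity at points $\gamma(u_0)$ with $u_0\neq0$ the paper simply invokes the already-known front case (\cite{tera0,tera}), which shortcuts your transversal-approach discussion. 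As for continuity of the bounded branch at $p$ itself, the paper is in fact no more detailed than your sketch: it computes the quotient along $v=0$ (obtaining the same value $\kappa_\nu$ you found) and asserts this suffices. Your proposed route via $\sqrt{D}-|\kappa_c(u)|/2=O(|v|)$ is the natural way to tighten this step.
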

\begin{proof}
Let us take an orthogonal adapted coordinate system $(U;u,v)$ centered at $p$. 
Then one can rewrite principal curvatures as 
$$\kappa_1=\dfrac{2(\tilde{L}\tilde{N}-v\tilde{M}^2)}{A-B},\quad 
\kappa_2=\dfrac{2(\tilde{L}\tilde{N}-v\tilde{M}^2)}{A+B},$$
where 
\begin{equation}\label{eq:ab}
A=\tilde{E}\tilde{N}-2v\tilde{F}\tilde{M}+v\tilde{G}\tilde{L},\quad
B=
\sqrt{A^2-4v(\tilde{E}\tilde{G}-\tilde{F}^2)(\tilde{L}\tilde{N}-v\tilde{M}^2)}
\end{equation}
(cf. \cite{tera0,tera}). 
By Lemmas \ref{prop:invariants} and \ref{lem:weingarten}, the function $\psi$ as in \eqref{eq:delta} can be written as  
$$\psi(u)=\tilde{N}(u,0)=\dfrac{\kappa_c(u)}{2}.$$

We first show the assertion \ref{assertion1-conti}. 
By the definition of a $k$-non-front singular point and the division lemma, if $p$ is a $2k$-non-front singular point, 
then there exists a function $\tilde{\kappa}_c(u)$ such that $\kappa_c(u)=u^{2k}\tilde{\kappa}_c(u)$ and $\tilde{\kappa}_c(0)\neq0$. 
Moreover, by the division lemma again, there exists a $C^\infty$ function $\tilde{N}_1$ on $U$ such that 
$\tilde{N}(u,v)=\tilde{N}(u,0)+v\tilde{N}_1(u,v)$. 
By the above arguments, it holds that 
$$\tilde{N}(u,v)=\dfrac{u^{2k}\tilde{\kappa}_c(u)}{2}+v\tilde{N}_1(u,v).$$
Thus the functions $A$ and $B$ given in above are written as 
$$A=\dfrac{u^{2k}\tilde{\kappa}_c(u)}{2}+vX(u,v),\quad B=\sqrt{\dfrac{u^{4k}\tilde{\kappa}_c(u)^2}{4}+vY(u,v)},$$
where $X$ and $Y$ are some functions. 
Therefore we see that 
$$A(u,0)\pm B(u,0)=\dfrac{u^{2k}}{2}(\tilde{\kappa}_c(u)\pm|\tilde{\kappa}_c(u)|)$$
holds for $u\neq0$. 
On the other hand, by Lemma \ref{prop:invariants} and the division lemma, 
$\tilde{L}(u,v)=\kappa_\nu(u)+v\tilde{L}_1(u,v)$ for some $C^\infty$ function $\tilde{L}_1$. 
Thus we have 
$$\tilde{L}\tilde{N}-v\tilde{M}^2=\dfrac{u^{2k}\kappa_\nu(u)\tilde{\kappa}_c(u)}{2}+vZ(u,v),$$
where $Z$ is some function. 
Since $f$ at $(u,0)$ $(u\ne0)$ is a front,
if $\kappa_i$ $(i=1$ or $2)$ is well-defined,
then $\kappa_i$ is continuous around $(u,0)$.
Hence it is sufficient to show the well-definedness 
of ${2(\tilde{L}\tilde{N}-v\tilde{M}^2)}/{(A\pm B)}$ at $(u,0)$.
We have 
$$\dfrac{2(\tilde{L}\tilde{N}-v\tilde{M}^2)}{A\pm B}(u,0)
=\dfrac{\kappa_\nu(u)
\tilde{\kappa}_c(u)}{(\tilde{\kappa}_c(u)\pm|\tilde{\kappa}_c(u)|)}$$
for $u\neq0$. 
Since $\tilde{\kappa}_c(0)\neq0$ and continuity of $\tilde{\kappa}_c$,
it holds that
$\tilde{\kappa}_c(u)>0$ or $\tilde{\kappa}_c(u)<0$. 
Thus one of principal curvatures is continuous along the $u$-axis, 
and hence we have the first assertion. 

We next show the assertion \ref{assertion2-unbdd}. 
If $p$ is a $(2k+1)$-non-front singular point, then we see that 
there exists a function $\tilde{\kappa}_c(u)$ such that 
$\kappa_c(u)=u^{2k+1}\tilde{\kappa}_c(u)$ and $\tilde{\kappa}_c(0)\neq0$. 
By the assumption, $\kappa_\nu(u)=u^ly(u)$
for some non-zero function $y$.
By the similar discussion, we have the conclusion.
\end{proof}
By this theorem, both principal curvatures are unbounded near a 
cuspidal cross cap $(k=1)$ if $\kappa_\nu\neq0$. 

\begin{remark}
In \cite{msuy}, notions of a {\it rational boundedness} and a {\it rational continuity} for (unbounded) functions are defined. 
Using these contexts, the unbounded principal curvature of a frontal with a $2k$-non-front singular point 
is always rationally bounded at the singular point.   
Moreover, both principal curvatures of a frontal are rationally bounded at a $(2k+1)$-non-front singular point. 
\end{remark}

\begin{example}
Let $f_1=(u,v^2,u^2+uv^3)$ and $f_2=(u,v^2,u^2+(u^2-v^2)v^2)$.
Then $(0,0)$ is a cuspidal cross cap singularity 
(a $1$-non-front singularity) of $f_1$, and
$(0,0)$ is a cuspidal $S_1^-$ singularity 
(a $2$-non-front singularity) of $f_2$.
Then both the limiting normal curvatures of $f_1$ and $f_2$ are 
$2(\ne0)$ at $(0,0)$.
The functions $A\pm B$ of $f_i$ $(i=1,2)$ as in \eqref{eq:ab} on the $u$-axis
are
$$
3(u^i\pm |u^i|)\sqrt{1+4u^2}.
$$
Thus Theorem \ref{thm:principal-k-frontal} in these functions are
verified.
Graphs of the principal curvatures of $f_1$ and $f_2$ are drawn
in Figures \ref{fig:prif1} and \ref{fig:prif2}.
\begin{figure}[htbp]
\begin{center}
\includegraphics[width=.3\linewidth]{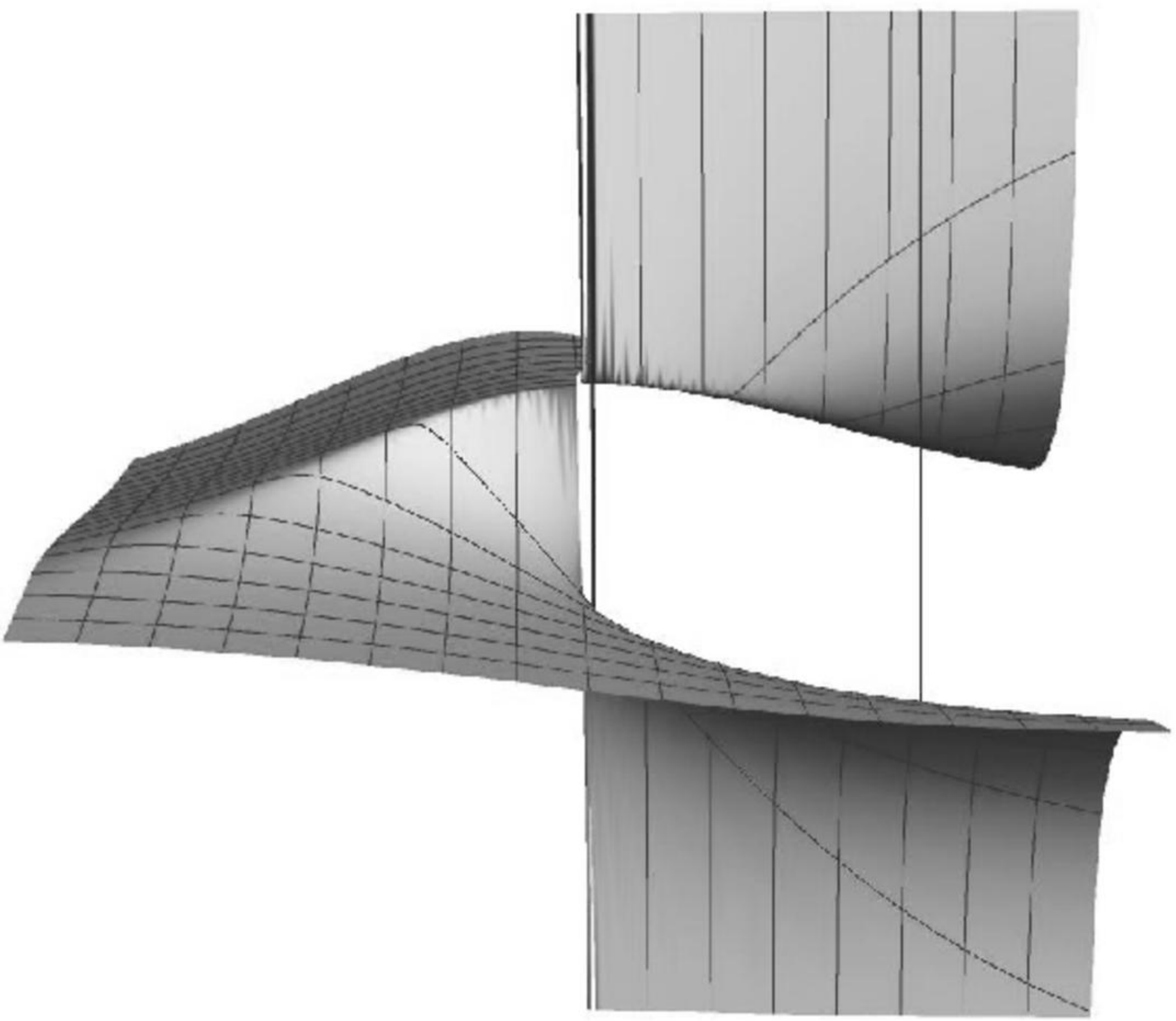}
\includegraphics[width=.3\linewidth]{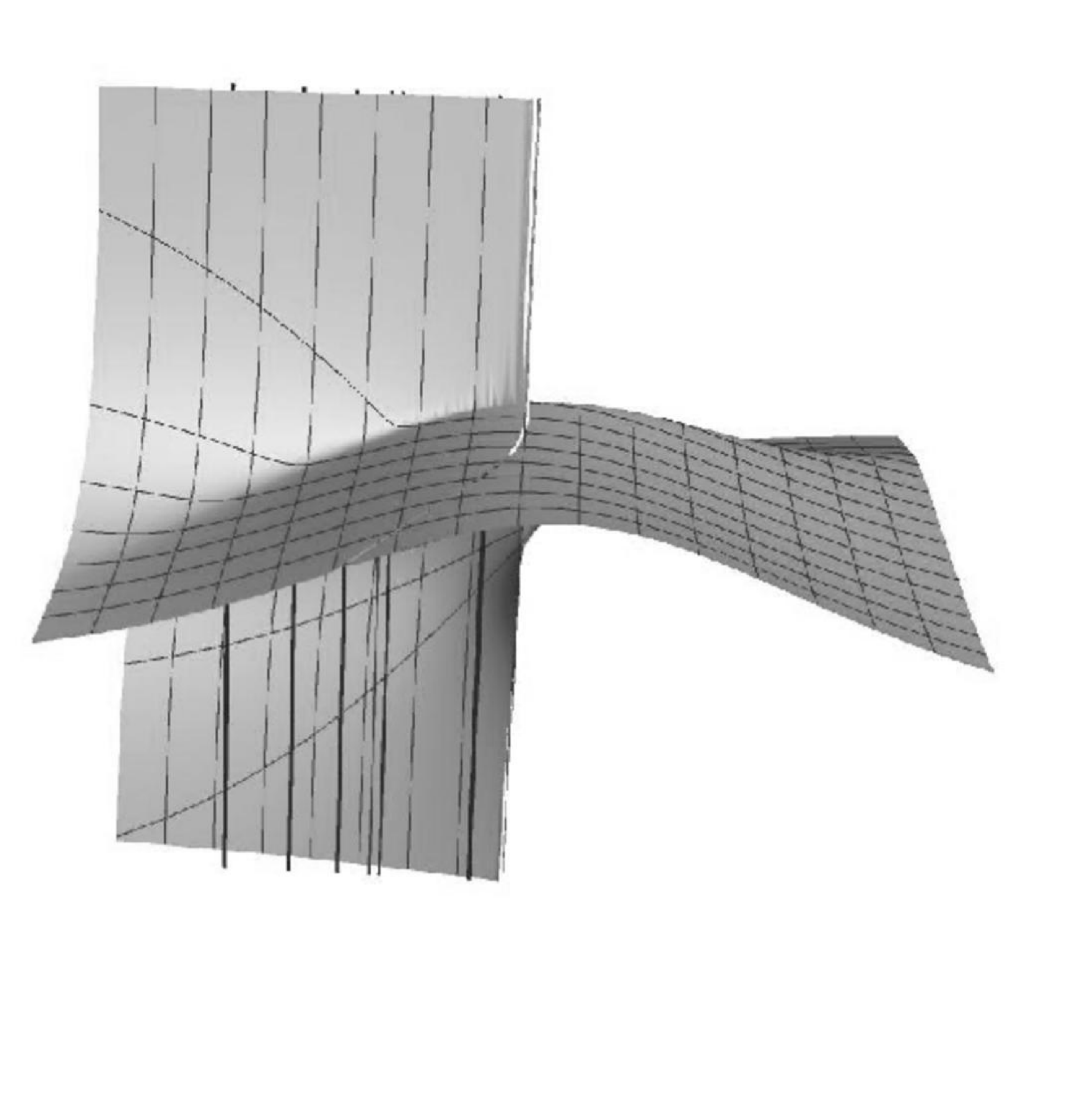}
\end{center}
\caption{Principal curvatures of $f_1$.}
\label{fig:prif1}
\end{figure}
\begin{figure}[htbp]
\begin{center}
\includegraphics[width=.3\linewidth]{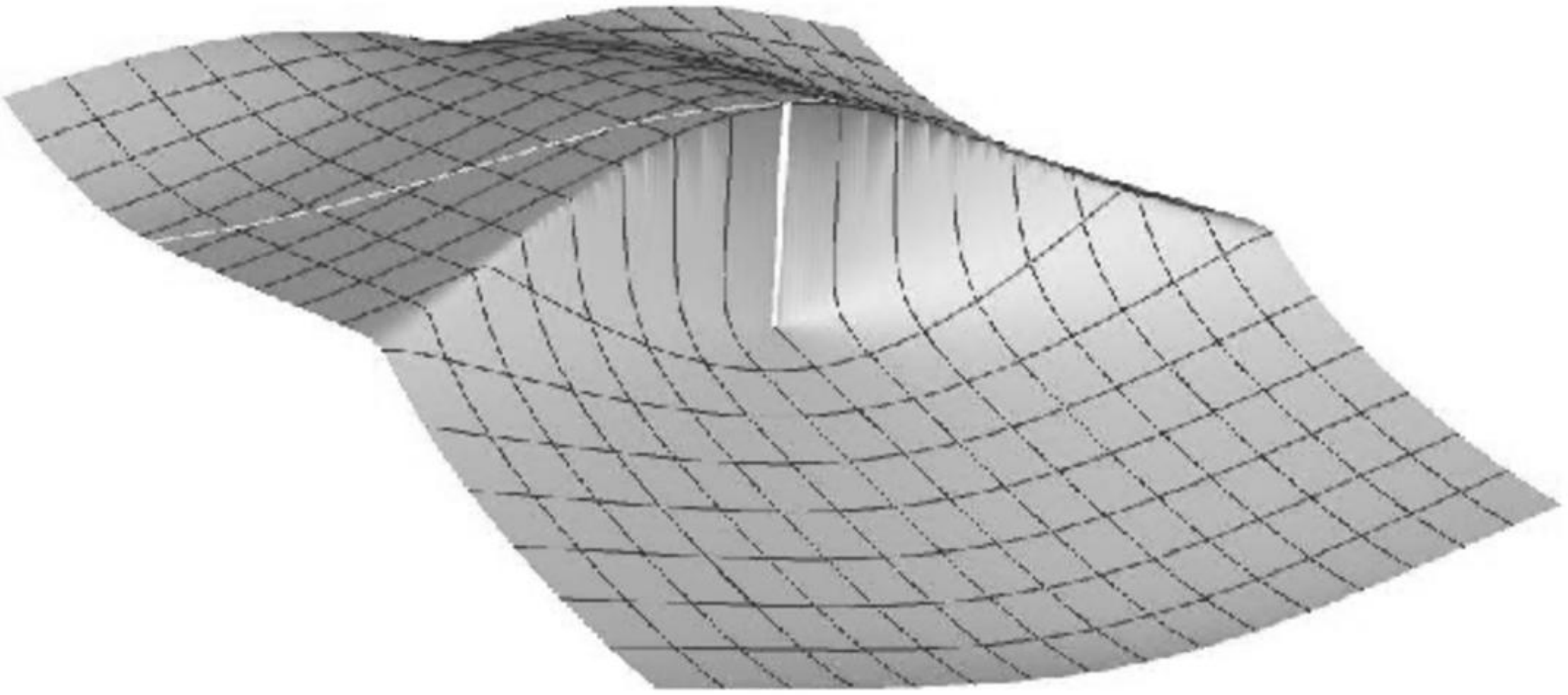}
\includegraphics[width=.3\linewidth]{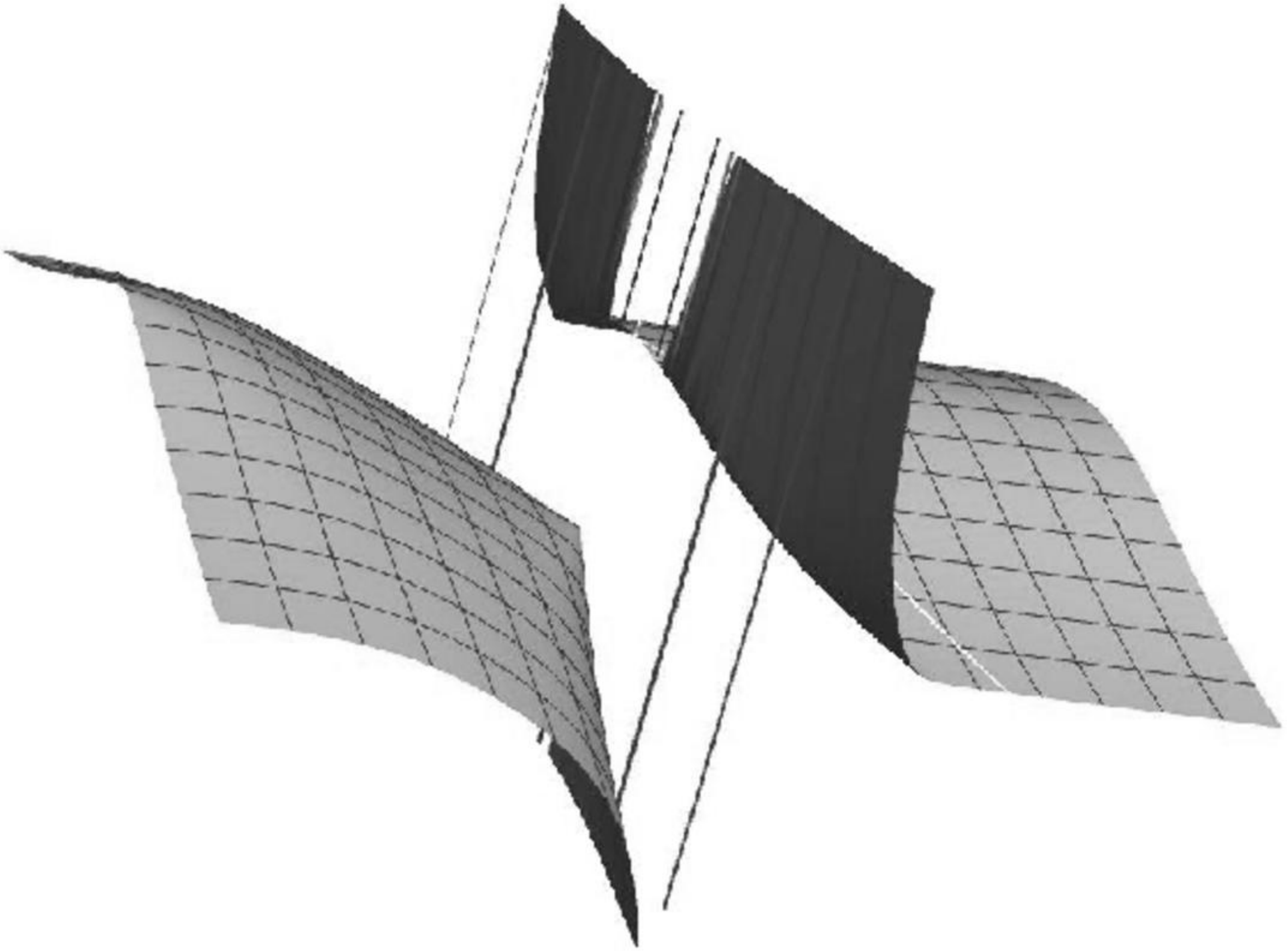}
\end{center}
\caption{Principal curvatures of $f_2$.}
\label{fig:prif2}
\end{figure}
\end{example}

We next consider the case that the Gaussian curvature $K$ of a front is bounded near a $k$-non-front singular point. 
\begin{proposition}\label{prop:bdd-gauss1}
Let $f$ be a frontal and $p$ a $k$-non-front singular point. 
If the Gaussian curvature $K$ of $f$ is bounded near $p$, 
then $K$ is non-positive at $p$. 
\end{proposition}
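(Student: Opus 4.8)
The plan is to compute the value of $K$ at $p$ directly from \eqref{eq:K-H}. First I would fix an orthogonal adapted coordinate system $(U;u,v)$ centered at $p$ with $\det(f_u,f_{vv},\nu)(u,0)>0$, so that the invariant expressions of Lemma \ref{prop:invariants} are available; in these coordinates \eqref{eq:K-H} reads $K=(\tilde{L}\tilde{N}-v\tilde{M}^2)/\bigl(v(\tilde{E}\tilde{G}-\tilde{F}^2)\bigr)$ on $U\setminus\{v=0\}$, with $\tilde{E}\tilde{G}-\tilde{F}^2>0$ throughout $U$. The idea is that boundedness of $K$ forces the numerator to be divisible by $v$, which pins down $\tilde{L}$ along the singular curve and then makes the value $K(p)$ computable in terms of $\kappa_t$.

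For the first step I would write $v(\tilde{E}\tilde{G}-\tilde{F}^2)\,K=\tilde{L}\tilde{N}-v\tilde{M}^2$ and let $v\to0$ with $u\ne0$ fixed: since $K$ is bounded the left-hand side tends to $0$, so $\tilde{L}(u,0)\tilde{N}(u,0)=0$ for all $u$ in a punctured neighborhood of $0$, hence for all $u$ near $0$ by continuity. Next, as recorded in the proof of Theorem \ref{thm:principal-k-frontal} (using Lemma \ref{prop:invariants}, Lemma \ref{lem:weingarten} and Fact \ref{fact:frontal}), one has $\tilde{N}(u,0)=\psi(u)=\kappa_c(u)/2$ in these coordinates; since $p$ is a $k$-non-front singular point with $k\ge1$, the function $\psi$ has a zero of order exactly $k$ at $u=0$, so $\tilde{N}(u,0)\ne0$ for $u$ in a punctured neighborhood of $0$. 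Combining this with the identity above gives $\tilde{L}(u,0)=0$ there, hence $\tilde{L}(u,0)\equiv0$ near $0$ by continuity; equivalently, $\kappa_\nu$ vanishes identically along the singular curve near $p$. By the division lemma we may then write $\tilde{L}=v\tilde{L}_1$ with $\tilde{L}_1\in C^\infty(U)$.

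Substituting $\tilde{L}=v\tilde{L}_1$ into \eqref{eq:K-H} and cancelling $v$ yields $$K=\dfrac{\tilde{L}_1\tilde{N}-\tilde{M}^2}{\tilde{E}\tilde{G}-\tilde{F}^2},$$ which is a $C^\infty$ function across $\{v=0\}$. Since $k\ge1$ we have $\tilde{N}(p)=\kappa_c(0)/2=0$, and in an orthogonal adapted coordinate system $(\tilde{E}\tilde{G}-\tilde{F}^2)(p)=1$ while $\tilde{M}(p)=\kappa_t(p)$ by Lemma \ref{prop:invariants}; therefore $K(p)=-\kappa_t(p)^2\le0$, which is the assertion. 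The computation is short; the only point needing care is the passage from ``$K$ bounded on the regular locus'' to the divisibility of the numerator by $v$ and the consequent smooth extension of $K$, and this is precisely the limit argument of the second paragraph together with the division lemma. Everything else is a direct substitution of the invariants from Lemma \ref{prop:invariants}.
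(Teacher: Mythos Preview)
Your proof is correct and follows essentially the same route as the paper's: show $\kappa_\nu\equiv0$ along the singular curve, write $\tilde{L}=v\tilde{L}_1$ via the division lemma, cancel $v$ in \eqref{eq:K-H}, and evaluate at $p$ to get $K(p)=-\kappa_t(p)^2\le0$. The only difference is that where the paper simply quotes the vanishing of $\kappa_\nu$ under bounded $K$ as a known fact, you supply a self-contained limit argument exploiting that $\tilde{N}(u,0)$ has an isolated zero of order $k$ at $u=0$.
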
 
\begin{proof}
Let us take an orthogonal adapted coordinate system $(U;u,v)$ around $p$. 
Suppose that the Gaussian curvature $K$ is bounded on $U$. 
Then it is known that the limiting normal curvature $\kappa_\nu$ vanishes along the $u$-axis. 
Thus there exists a $C^\infty$ function $\tilde{L}_1$ on $U$ such that $\tilde{L}=v\tilde{L}_1$ 
by Lemma \ref{prop:invariants} and the division lemma. 
In this case, $K$ can be written as 
$K=(\tilde{L}_1\tilde{N}-\tilde{M}^2)/(\tilde{E}\tilde{G}-\tilde{F}^2)$. 
Since $\tilde{N}(p)=\kappa_c(p)/2=0$, $K(p)=-\tilde{M}(p)^2(=-\kappa_t(p)^2)\leq0$ holds. 
\end{proof}

By the proof of this proposition, the following assertion holds. 
\begin{corollary}\label{cor:bdd-gauss1}
Suppose that the Gaussian curvature $K$ of a frontal is bounded near 
a $k$-non-front singular point $p$.
Then $K(p)=0$ if and only if $\kappa_t(p)=0$.
\end{corollary}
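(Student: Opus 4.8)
The statement to prove is Corollary~\ref{cor:bdd-gauss1}: assuming the Gaussian curvature $K$ of a frontal is bounded near a $k$-non-front singular point $p$, we have $K(p)=0$ if and only if $\kappa_t(p)=0$.

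\medskip

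The plan is to read off the conclusion directly from the computation carried out in the proof of Proposition~\ref{prop:bdd-gauss1}, which already does essentially all the work. First I would recall that under the boundedness hypothesis on $K$, the limiting normal curvature $\kappa_\nu$ vanishes along the singular curve (the $u$-axis in an orthogonal adapted coordinate system $(U;u,v)$), so by Lemma~\ref{prop:invariants} and the division lemma we may write $\tilde{L}=v\tilde{L}_1$ for some $C^\infty$ function $\tilde{L}_1$. Substituting this into the formula \eqref{eq:K-H} for $K$, the factor $v$ cancels and $K=(\tilde{L}_1\tilde{N}-\tilde{M}^2)/(\tilde{E}\tilde{G}-\tilde{F}^2)$, which is manifestly a $C^\infty$ function near $p$ (here $\tilde{E}\tilde{G}-\tilde{F}^2>0$).

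\medskip

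Next I would evaluate $K$ at $p$. Since $p$ is a non-front (in particular $k$-non-front) singular point, Fact~\ref{fact:frontal} together with the expression $\kappa_c(u)=2\tilde{N}(u,0)$ from \eqref{eq:invariants} gives $\tilde{N}(p)=0$. Therefore the term $\tilde{L}_1\tilde{N}$ vanishes at $p$, and using $\kappa_t(u)=\tilde{M}(u,0)$ from \eqref{eq:invariants} and the normalization $\tilde{E}\tilde{G}-\tilde{F}^2=1$ at $p$ (coming from $|f_u(u,0)|=1$, $f_v(u,0)=0$ in an orthogonal adapted chart), we obtain $K(p)=-\tilde{M}(p)^2=-\kappa_t(p)^2$. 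From this single identity both implications are immediate: $K(p)=0$ forces $\kappa_t(p)^2=0$, hence $\kappa_t(p)=0$; conversely $\kappa_t(p)=0$ forces $K(p)=0$.

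\medskip

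There is really no serious obstacle here, since the corollary is an elementary consequence of an equation already established in the preceding proof; the only point needing a line of care is making sure the normalization of the first fundamental form at $p$ is invoked correctly so that the denominator $\tilde{E}\tilde{G}-\tilde{F}^2$ equals $1$ at $p$ and the sign in $K(p)=-\kappa_t(p)^2$ comes out right. Everything else is bookkeeping with the definitions in \eqref{eq:fundamentals} and \eqref{eq:invariants}.
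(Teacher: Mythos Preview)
Your proposal is correct and follows essentially the same approach as the paper: the corollary is read off directly from the identity $K(p)=-\kappa_t(p)^2$ obtained in the proof of Proposition~\ref{prop:bdd-gauss1}. One minor remark on your parenthetical justification: the normalization $\tilde{E}\tilde{G}-\tilde{F}^2=1$ at $p$ comes from $|f_{vv}(u,0)|=1$ and $\langle f_u,f_{vv}\rangle(u,0)=0$ (so that $h(u,0)=f_{vv}(u,0)$ gives $\tilde G=1$, $\tilde F=0$), not from $f_v(u,0)=0$ alone.
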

For the case of a cuspidal edge $p$, if the Gaussian curvature $K$ is bounded near $p$, 
then it holds that $4K(p)=-4\kappa_t(p)^2-\kappa_s(p)\kappa_c(p)^2$ (cf. \cite{msuy}). 
Moreover, this quantity relates to the value of the Gaussian curvature of a focal surfaces with respect to unbounded principal curvature (\cite{tera2}).

\subsection{Umbilicity of a frontal at $k$-non-front singular points}
\label{sec:umbknon}
We consider umbilicity of a frontal. 
Let $f$ be a frontal, and let $p$ be a $k$-non-front singular point. 
Then we take an orthogonal adapted coordinate system $(U;u,v)$ around $p$. 
On this coordinate system, the function $\Gamma=H^2-K$ can be 
written as in \eqref{eq:umbilicity2} on $U\setminus\{v=0\}$, 
where $K$ and $H$ are the Gaussian curvature and the mean curvature, respectively. 
By this expression, a function $\tilde{\Gamma}=4\lambda^2 \Gamma$ can be extended as a $C^\infty$ function on $U$ 
since $\lambda=v\det(f_u,h,\nu)$. 
On the set of regular points $U\setminus\{v=0\}$, $\Gamma=0$ is equivalent to $\tilde{\Gamma}=0$. 
Thus in this case, we say that a point $q\in U$ is an {\it umbilic point} of a frontal 
$f$ with a $k$-non-front singular point if $\tilde{\Gamma}(q)=0$. 
\begin{proposition}\label{prop:isolated-umbilic-k-frontal}
Let $f$ be a frontal with a $k$-non-front singular point $p$. 
Then $p$ is an umbilic point of $f$, and a critical point of $\tilde{\Gamma}$. 
Moreover, if $f$ satisfies $\kappa_t\kappa_c'\neq0$ at $p$, 
then $\tilde{\Gamma}$ is a Morse function with index $0$ or $2$ at $p$, 
in particular, 
$p$ is an isolated umbilic point of $f$. 
\end{proposition}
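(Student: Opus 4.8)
The plan is to express $\tilde\Gamma$ as a single manifestly $C^\infty$ function and then read off its $2$-jet at $p$. Fix an orthogonal adapted coordinate system $(U;u,v)$ centered at $p$ with $\det(f_u,f_{vv},\nu)(u,0)>0$, so that $\tilde E(p)=\tilde G(p)=1$, $\tilde F(p)=0$; by Fact~\ref{fact:frontal} and \eqref{eq:invariants} one has $\tilde N(p)=\kappa_c(p)/2=0$, $\tilde L(p)=\kappa_\nu(p)$, $\tilde M(p)=\kappa_t(p)$, and since $\psi(u)=\tilde N(u,0)=\kappa_c(u)/2$ also $\tilde N_u(p)=\kappa_c'(p)/2$, while $\tilde N_v(p)=r_b(p)/3$ by Lemma~\ref{prop:invariants}. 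Set $Q=\tilde E\tilde G-\tilde F^2$, $A=\tilde E\tilde N-2v\tilde F\tilde M+v\tilde G\tilde L$, $C=\tilde L\tilde N-v\tilde M^2$. Since $\nu=\pm(f_u\times h)/|f_u\times h|$ we have $\lambda^2=v^2Q$, so dividing \eqref{eq:umbilicity1} by $Q$ gives
$$\tilde\Gamma=4\lambda^2\Gamma=\frac{A^2}{Q}-4vC,$$
which is $C^\infty$ since $Q>0$. (One has to work with this form, not with $4\lambda^2\Gamma$ directly, because $\Gamma=H^2-K$ blows up like $v^{-2}$ along the $u$-axis.)

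The first assertion is then immediate. At $p$ both $v=0$ and $A(p)=\tilde E(p)\tilde N(p)=0$, hence $\tilde\Gamma(p)=0$ and $p$ is an umbilic point. Each term of $\tilde\Gamma_u$ and of $\tilde\Gamma_v$ is divisible by $A$ or by $v$, except for the term $-4C$ of $\tilde\Gamma_v$, which vanishes at $p$ because $C(p)=\tilde L(p)\tilde N(p)=0$; therefore $\tilde\Gamma_u(p)=\tilde\Gamma_v(p)=0$ and $p$ is a critical point of $\tilde\Gamma$.

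For the second assertion I would differentiate twice, keeping only the terms that survive at $A=0$, $v=0$, $Q(p)=1$, which gives
$$\tilde\Gamma_{uu}(p)=2A_u(p)^2,\qquad \tilde\Gamma_{uv}(p)=2A_u(p)A_v(p)-4C_u(p),\qquad \tilde\Gamma_{vv}(p)=2A_v(p)^2-8C_v(p).$$
Substituting $A_u(p)=\tilde N_u(p)=\kappa_c'(p)/2$, $A_v(p)=\tilde N_v(p)+\tilde L(p)=r_b(p)/3+\kappa_\nu(p)$, $C_u(p)=\tilde L(p)\tilde N_u(p)=\kappa_\nu(p)\kappa_c'(p)/2$, and $C_v(p)=\tilde L(p)\tilde N_v(p)-\tilde M(p)^2=\kappa_\nu(p)r_b(p)/3-\kappa_t(p)^2$, one finds $\tilde\Gamma_{uu}(p)=\kappa_c'(p)^2/2$, $\tilde\Gamma_{uv}(p)=\kappa_c'(p)\bigl(r_b(p)/3-\kappa_\nu(p)\bigr)$ and $\tilde\Gamma_{vv}(p)=2\bigl(r_b(p)/3-\kappa_\nu(p)\bigr)^2+8\kappa_t(p)^2$, and after the evident cancellation
$$\det\hess\tilde\Gamma(p)=4\,\kappa_c'(p)^2\,\kappa_t(p)^2=4\bigl(\kappa_t\kappa_c'\bigr)(p)^2.$$
Thus if $\kappa_t\kappa_c'\neq0$ at $p$ the Hessian is non-degenerate, so $\tilde\Gamma$ is a Morse function with index $0$ or $2$ at $p$, and in particular $p$ is an isolated umbilic point.

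I expect the difficulty here to be organizational rather than conceptual: the identifications $\tilde N_u(p)=\kappa_c'(p)/2$ and $\tilde N_v(p)=r_b(p)/3$ (coming from $\psi=\kappa_c/2$ and Lemma~\ref{prop:invariants}) must be used correctly, and in assembling $\hess\tilde\Gamma(p)$ one has to prune carefully the many terms coming from the second derivatives of $A^2/Q-4vC$. It is also worth noting that the hypothesis $\kappa_c'(p)\neq0$ forces $k=1$, since for $k\geq2$ one has $\psi'(0)=0$ by the definition of a $k$-non-front singular point; thus the second assertion really concerns the cuspidal cross cap case.
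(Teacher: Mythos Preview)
Your proof is correct and follows essentially the same route as the paper: fix an orthogonal adapted chart, observe that $\tilde\Gamma$ and its gradient vanish at $p$ because $v=0$ and $\tilde N(p)=0$, and compute the Hessian entries $\tilde\Gamma_{uu}(p)=2\tilde N_u^2$, $\tilde\Gamma_{uv}(p)=2\tilde N_u(\tilde N_v-\tilde L)$, $\tilde\Gamma_{vv}(p)=8\tilde M^2+2(\tilde N_v-\tilde L)^2$, giving $\det\hess\tilde\Gamma(p)=16\tilde M^2\tilde N_u^2=4\kappa_t^2\kappa_c'^2$. The only cosmetic difference is that the paper cites the sum-of-squares expression \eqref{eq:umbilicity2} for $\tilde\Gamma$, whereas you work with the algebraically equivalent form $A^2/Q-4vC$ coming from \eqref{eq:umbilicity1}; the second-order computations agree term by term. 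Your closing remark that $\kappa_c'(p)\neq0$ forces $k=1$ is a worthwhile observation the paper does not make.
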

\begin{proof}
Take an orthogonal adapted coordinate system $(U;u,v)$ centered at $p$. 
Then the first assertions follow immediately by \eqref{eq:umbilicity2} 
and the fact that $\tilde{N}(p)=0$. 
By a direct calculation, we have 
$$\tilde{\Gamma}_{uu}=2\tilde{N}_u^2,\quad 
\tilde{\Gamma}_{uv}=2\tilde{N}_u(\tilde{N}_v-\tilde{L}),\quad 
\tilde{\Gamma}_{vv}=8\tilde{M}^2+2(\tilde{N}_v-\tilde{L})^2$$
at $p$. 
Thus 
$\det\hess(\tilde{\Gamma})(p)
=16\tilde{M}(p)^2\tilde{N}_u(p)^2$.
By Lemma \ref{prop:invariants}, it holds that 
$\tilde{M}=\kappa_t$ and $\tilde{N}_u=\kappa_c'$ at $p$. 
This shows the assertion.
\end{proof}

%%%%%SECTION 5%%%%%
\section{Curvature line frames}\label{sec:curvlineframe}
Motivating the above discussion, %we give the following definition,
we introduce a notion `curvature line frames' on frontals. 
Let $f\colon U\to\R^3$ be a frontal and $\nu$ its Gauss map, 
where $U$ is an open set of $\R^2$. 
For a vector field $\x$, we set $f_{\x}=df(\x)$ stands for
the directional derivative of $f$ by $\x$.

\subsection{Curvature line frames}
\begin{definition}\label{def:curvline-frame}
A {\it curvature line frame generator} of a frontal $f$ is a pair of 
vector fields $\u_1$ and $\u_2\in \mathfrak{X}(U)$ satisfying
\begin{enumerate}
\item\label{cond1} $\{f_{\u_1},f_{\u_2}\}$ gives a 
basis of $\nu^{\perp}$ on $U\setminus S(f)$, and
$\inner{f_{\u_1}}{f_{\u_2}}=0$.
\item\label{cond2} 
the pairs 
$\{f_{\u_i},\nu_{\u_i}\}$ $(i=1,2)$
are linearly dependent.
\end{enumerate}
A {\it curvature line frame corresponding to\/ $\{\u_1,\u_2\}$} of 
a frontal $f$ is a pair of sections 
$\{\e_1,\e_2\}$ of a vector bundle $\nu^{\perp}$ satisfying 
that $\{\u_1,\u_2\}$ is a curvature line frame generator and
\begin{enumerate}
\item\label{cond21} $\{\e_1,\e_2\}$ gives an 
orthonormal frame of $\nu^{\perp}$ for any point in $U$,
\item\label{cond22} 
the pairs 
$\{f_{\u_i},\e_i\}$ $(i=1,2)$
are linearly dependent.
\end{enumerate}
The directions defined by $\u_1,\u_2$ are called the
{\it principal direction generators}.
The directions defined by $\e_1,\e_2$ are called the
{\it principal directions}.
A coordinate system $(x_1,x_2)$ is called a {\it curvature line coordinate}
if for a curvature line frame generator $\{\u_1,\u_2\}$,
$\partial_{x_i}$ and $\u_i$ are linearly dependent
on the set of regular points of $f$ for $i=1,2$.
Each integral curve of curvature line generator
is called the {\it line of curvature}.
\end{definition}
We remark that curvature line frame generator
might be linearly dependent on the set of singular points.
Existence of a curvature line coordinate system around a non-umbilic point
of a regular surface is well known.
In Proposition \ref{prop:p-dir}, we gave 
not only existence of
a curvature line coordinate
but also a explicit construction of it
near a pure-frontal singular point of a frontal.
The existence of curvature line coordinate systems
for front is known \cite{murataumehara}.
Following the argument in \cite{murataumehara},
we give a curvature line frame explicitly
near a non-degenerate singular points of fronts
as follows.
Since $f$ is a front, there exists a constant $t\in \R$ such that a parallel surface $f_t=f+t\nu$ of $f$ is regular at $p$. 
We note that $\nu$ is also the Gauss map for $f_t$. 
Since $p$ is a non-degenerate singular point of $f$, $\rank df_p=1$. 
This implies that $p$ is not an umbilic point of $f_t$. 
Thus a curvature line coordinate system $(u,v)$ for $f_t$ exists on some neighborhood $V(\subset U)$ of $p$. 
Since either $f_u\neq0$ or $f_v\neq0$ holds at $p$, one can assume that $f_u(p)\neq0$. 
The following holds:
\begin{lemma}\label{lem:fv=0}
On $S(f)\cap V$, $f_v=0$ holds.
\end{lemma}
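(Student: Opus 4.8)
The plan is to work in the curvature line coordinate system $(u,v)$ for the parallel surface $f_t=f+t\nu$ that was just set up, and exploit the fact that $\nu$ is the common Gauss map of $f$ and $f_t$. First I would record the defining property of a curvature line coordinate for the \emph{regular} surface $f_t$: the coordinate curves are lines of curvature, which means $(f_t)_v$ is a principal direction, and by the Rodrigues equation the derivative $\nu_v$ of the Gauss map along that direction is a scalar multiple of $(f_t)_v$. Since $(f_t)_v = f_v + t\nu_v$, this reads $\nu_v = c\,(f_v + t\nu_v)$ for some function $c$, hence $(1-ct)\nu_v = c\,f_v$, so $\nu_v$ and $f_v$ are parallel on all of $V$ (after possibly shrinking $V$ so that $1-ct\neq 0$, which holds at $p$ since $f_t$ is an immersion there). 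Equally, the first fundamental form of $f_t$ is diagonal in $(u,v)$, i.e. $\inner{(f_t)_u}{(f_t)_v}=0$.

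Next I would restrict to a singular point $q\in S(f)\cap V$ and show $f_v(q)=0$. At such a point $\rank df_q = 1$ because $q$ is non-degenerate, so the kernel of $df_q$ is one-dimensional; thus exactly one of $f_u(q), f_v(q)$ could be zero, or more precisely the null direction is some line in $T_qU$. The key is to pin down \emph{which} coordinate direction is null. Here I would use that $f_u(p)\neq 0$ was arranged, together with continuity: on a small enough $V$, $f_u\neq 0$ everywhere, so the null direction of $df$ at any $q\in S(f)\cap V$ is never the $\partial_u$-direction, and in particular $df_q$ restricted to $\partial_v$ must be responsible for the rank drop. Combined with $\inner{(f_t)_u}{(f_t)_v}=0$ and $\nu_v \parallel f_v$, one gets that $f_v(q)$ lies in $\Ker df_q$; but $f_v(q)$ is literally $df_q(\partial_v)$, so $f_v(q)\in\Ker df_q$ forces $f_v(q)=0$ only if $\partial_v$ spans the kernel — which is exactly the point to nail down. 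So the real content is: show the null vector field $\eta$ of $f$ is, along $S(f)\cap V$, parallel to $\partial_v$.

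To get that, I would argue as follows. Since $\nu_v\parallel f_v$ on $V$, the map $q\mapsto df_q(\partial_v)$ and the map $q\mapsto \nu_v(q)$ are proportional. If $q\in S(f)$ and $f_v(q)\neq 0$, then $f_v(q)\neq 0$ is a nonzero tangent vector of $f$ at $q$; since $\rank df_q=1$, its span is all of $\im df_q$, and the orthogonality $\inner{f_u}{f_v}=v$-diagonality of the first fundamental form of $f_t$ — expanded as $\inner{f_u+t\nu_u}{f_v+t\nu_v}=0$ and using $\nu_u,\nu_v\perp$ nothing in particular — would be combined with $f_u(q)\neq 0$ to derive a contradiction with $\rank df_q=1$: namely $f_u(q)$ and $f_v(q)$ would then be two linearly independent vectors in $\im df_q$. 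Hence $f_v(q)=0$, which is the assertion. I expect the main obstacle to be this last step — cleanly extracting from the diagonality of $f_t$'s first fundamental form and the parallelism $\nu_v\parallel f_v$ the conclusion that $f_u(q)$ and $f_v(q)$ cannot both be nonzero at a rank-one point; it may be cleaner to instead observe directly that $df_q(\partial_v)=f_v(q)$ and $\nu_v(q)$ are parallel, that the immersion $(f,\nu)$ (for a front) has $(f_v,\nu_v)(q)\neq(0,0)$, and play the diagonality against the one-dimensionality of $\Ker(df_q,d\nu_q)$'s behavior — but the essential identity $(1-ct)\nu_v=c f_v$ derived from the Rodrigues equation for $f_t$ is what drives everything, so I would make sure that identity is airtight first.
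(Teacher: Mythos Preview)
Your strategy—exploit the curvature-line structure of the regular parallel surface $f_t$ via Rodrigues—is exactly the paper's, but you stop one step short and make one logical slip.

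The slip: you assert $1-ct\neq 0$ at $p$ ``since $f_t$ is an immersion there.'' That is backwards. From $\nu_v=c\,(f_t)_v$ and $(f_t)_v=f_v+t\nu_v$ one gets $f_v=(1-ct)(f_t)_v$, so $1-ct=0$ is \emph{equivalent} to $f_v=0$ (because $(f_t)_v\neq0$). You are assuming away the very conclusion of the lemma. Fortunately the linear dependence of $\nu_v$ and $f_v$ that you want holds in either case, so the conclusion survives even though the justification does not.

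The gap you yourself flag is the real one: from $\inner{(f_t)_u}{(f_t)_v}=0$ together with $\nu_v\parallel f_v$ alone you cannot conclude $\inner{f_u}{f_v}=0$. The missing move is to apply Rodrigues in the $u$-direction as well: $\nu_u=c_1(f_t)_u$ gives $f_u=(1-c_1t)(f_t)_u$, and here $1-c_1t\neq0$ \emph{is} genuinely forced, by $f_u\neq0$ on $V$. Now $(f_t)_u$ is a nonzero multiple of $f_u$, and wherever $f_v\neq0$ the vector $(f_t)_v$ is a nonzero multiple of $f_v$; hence $\inner{(f_t)_u}{(f_t)_v}=0$ yields $\inner{f_u}{f_v}=0$, and by continuity this holds on all of $V$. (Equivalently, expand $\inner{(f_t)_u}{(f_t)_v}$ directly and observe that the cross terms $\inner{\nu_u}{(f_t)_v}$, $\inner{(f_t)_u}{\nu_v}$, $\inner{\nu_u}{\nu_v}$ all vanish by Rodrigues and orthogonality.) Once $\inner{f_u}{f_v}=0$ on $V$, your contradiction is immediate: at a singular $q$ with $f_v(q)\neq0$, the vectors $f_u(q)$ and $f_v(q)$ are nonzero and orthogonal, hence independent, contradicting $\rank df_q=1$.

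This is precisely the paper's proof. The paper first records $\inner{f_u}{f_v}=0$ on $V$ (by the same parallel-surface reasoning, phrased as a limit $t\to0$), then writes the null direction as $a\partial_u+\partial_v$ so that $af_u+f_v=0$ on $S(f)\cap V$; dotting with $f_u$ gives $a|f_u|^2=0$, whence $a=0$ and $f_v=0$. Your route and the paper's are the same argument—you simply did not isolate the identity $\inner{f_u}{f_v}=0$ that makes the last step a one-liner.
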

\begin{proof}
Since $(u,v)$ is a curvature line coordinate system for $f_t$ on $V$, 
we have 
$$\inner{(f_t)_u}{(f_t)_v}=\inner{f_u+t\nu_u}{f_v+t\nu_v}=0.$$
Taking a limit, we have $\lim_{t\to0}\inner{(f_t)_u}{(f_t)_v}=\inner{f_u}{f_v}=0$ on $V$. 
Since $f_u\neq0$, the kernel $\ker df$ is spanned by $a\partial_u+\partial_v$ on $S(f)\cap V$, where $a$ is a $C^\infty$ function on $V$. 
This means that 
$$af_u+f_v=0$$
holds on $S(f)\cap V$. 
Calculating inner product $\inner{af_u+f_v}{f_u}$, it follows that 
$$\inner{af_u+f_v}{f_u}=a\inner{f_u}{f_u}+\inner{f_u}{f_v}=a\inner{f_u}{f_u}=0$$
on $S(f)\cap V$ because $\inner{f_u}{f_v}=0$ on $V$. 
Since $f_u\neq0$, we see that $a=0$ on $S(f)\cap V$, 
and hence we have the conclusion.
\end{proof}
Since $f_v=0$ on $S(f)\cap V$ by Lemma \ref{lem:fv=0}, 
there exists a $C^\infty$ map $\omega\colon V\to\R^3$ such that 
\begin{equation}\label{eq:fvlambdaomega}
f_v=\hat{\lambda}\omega,
\end{equation}
where $\hat{\lambda}$ is an identifier of singularities. 
The exterior derivative of $\lambda$ is calculated as 
$$d\lambda=d\det(f_u,f_v,\nu)=d(\hat{\lambda}\det(f_u,\omega,\nu))=d\hat{\lambda}\det(f_u,\omega,\nu)+\hat{\lambda}\cdot d(\det(f_u,\omega,\nu)).$$
Since $p$ is a non-degenerate singular point, $d\lambda(p)\neq0$, in particular $d\hat{\lambda}(p)\neq0$. 
Thus $\det(f_u,\omega,\nu)$ does not vanish at $p$, 
and hence we see $\omega(p)\neq0$. 
Therefore $\det(f_u,\omega,\nu)$ does not vanish near $p$, and hence $\{f_u,\omega,\nu\}$ gives an moving frames along $f$ at least locally. 
We set 
\begin{equation}\label{eq:e1e2}
\e_1=\dfrac{f_u}{|f_u|},\quad \e_2=\dfrac{\omega}{|\omega|}.
\end{equation}
\begin{lemma}
The frame $\{\e_1, \e_2\}$ is a curvature line frame.
\end{lemma}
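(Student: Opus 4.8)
The plan is to verify the two conditions of Definition \ref{def:curvline-frame} for the pair $\{\e_1,\e_2\}$ defined in \eqref{eq:e1e2}, where the underlying generator pair $\{\u_1,\u_2\}$ is taken so that $\u_1$ is parallel to $\partial_u$ and $\u_2$ is parallel to $\partial_v$ on the set of regular points. First I would recall that $(u,v)$ is a curvature line coordinate system for the regular parallel surface $f_t=f+t\nu$, so on $V\setminus S(f)$ the vectors $(f_t)_u$ and $(f_t)_v$ are principal directions of $f_t$; since a parallel surface shares its lines of curvature with the original surface (the Weingarten maps commute, or equivalently both have the same principal foliations away from singularities), the vectors $f_u$ and $f_v$ are principal directions of $f$ on $V\setminus S(f)$. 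This immediately gives that $\{f_u,\nu_u\}$ and $\{f_v,\nu_v\}$ are linearly dependent on the regular set, and by continuity on all of $V$; rewriting $f_v=\hat\lambda\omega$ from \eqref{eq:fvlambdaomega}, the pair $\{f_\omega,\nu_\omega\}$ (reading $\omega$ as the relevant vector field) stays linearly dependent, which is condition \eqref{cond2} for the generator, and together with the orthogonality established next it also yields condition \eqref{cond22} for $\{\e_1,\e_2\}$.

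Next I would check orthonormality and the spanning property. By construction $\e_1=f_u/|f_u|$ and $\e_2=\omega/|\omega|$ are unit vectors, and both lie in $\nu^\perp$ since $f_u\perp\nu$ always and $f_v=\hat\lambda\omega\perp\nu$ forces $\omega\perp\nu$ (as $\hat\lambda\not\equiv0$, hence $\omega\perp\nu$ holds on a dense set and then everywhere by continuity). For orthogonality of $\e_1$ and $\e_2$: from Lemma \ref{lem:fv=0}'s proof we already know $\inner{f_u}{f_v}=0$ on all of $V$ (obtained by taking the $t\to0$ limit of $\inner{(f_t)_u}{(f_t)_v}=0$), so $\hat\lambda\inner{f_u}{\omega}=0$ on $V$, hence $\inner{f_u}{\omega}=0$ on $V\setminus S(f)$ and then on all of $V$ by continuity; dividing by the norms gives $\inner{\e_1}{\e_2}=0$. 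Since we showed earlier that $\{f_u,\omega,\nu\}$ is a moving frame near $p$ (because $\det(f_u,\omega,\nu)(p)\neq0$), the pair $\{\e_1,\e_2\}$ spans $\nu^\perp$ at every point of $V$, giving condition \eqref{cond21}; on the regular set $\{f_{\u_1},f_{\u_2}\}=\{f_u,\hat\lambda\omega\}$ up to positive scalars forms an orthogonal basis of $\nu^\perp$ with $\inner{f_{\u_1}}{f_{\u_2}}=0$, which is condition \eqref{cond1}.

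Finally I would assemble condition \eqref{cond22}: on $V\setminus S(f)$, $f_{\u_1}$ is a scalar multiple of $f_u=|f_u|\e_1$ and $f_{\u_2}$ is a scalar multiple of $f_v=\hat\lambda|\omega|\e_2$, so $\{f_{\u_i},\e_i\}$ are linearly dependent there, and since $\e_i$ and the relevant vectors are $C^\infty$ this dependence persists on all of $V$; this completes the verification that $\{\e_1,\e_2\}$ is a curvature line frame. The step I expect to be the main obstacle is the passage from the parallel surface to $f$ itself — carefully justifying that lines of curvature of $f_t$ coincide with those of $f$ away from singularities (this is where the relation between the Weingarten maps of parallel surfaces enters), and then making the continuity extension across $S(f)$ airtight; everything else is bookkeeping with the frame $\{f_u,\omega,\nu\}$ and the already-established identity $\inner{f_u}{f_v}=0$.
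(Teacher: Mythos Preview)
Your proposal is correct and follows essentially the same approach as the paper: use that $(u,v)$ is a curvature line coordinate for $f_t$ to deduce $\nu_u\parallel f_u$ and $\nu_v\parallel\omega$, then extend by continuity across $S(f)$ and read off the frame conditions from the already-established facts that $\{f_u,\omega,\nu\}$ is a frame and $\inner{f_u}{f_v}=0$. The one place where the paper is slicker is precisely the step you flag as the ``main obstacle'': rather than invoking the general principle that parallel surfaces share lines of curvature, the paper simply writes $(f_t)_u=f_u+t\nu_u$ and observes that $\nu_u\parallel(f_t)_u$ forces $\nu_u\parallel f_u$ (and similarly $\nu_v\parallel(f_t)_v=\hat\lambda\omega+t\nu_v$ forces $\omega\parallel\nu_v$ wherever $\hat\lambda\neq0$), so no separate lemma about Weingarten maps of parallel surfaces is needed.
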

\begin{proof}
By definition, $f_u$ and $\e_1$ are linearly dependent, and also
$f_v$ and $\e_2$ are linearly dependent on $V$. 
Since the coordinate system $(u,v)$ is 
a curvature line coordinate system of $f_t$,
$\nu_u$ is linearly dependent to $(f_t)_u=f_u+t\nu_u$, 
and hence $\e_1$ and $\nu_u$ are linearly dependent. 
Similarly, $\nu_v$ and $(f_t)_v=f_v+t\nu_v$ are linearly dependent. 
Therefore $\nu_v$ and $\lambda\omega+t\nu_v$ are linearly dependent. 
This implies that $\omega$ is linearly dependent to 
$\nu_v$ when $\hat{\lambda}\neq0$. 
By the continuity, $\omega$ and $\nu_v$ are linearly dependent on $V$.
\end{proof}
We remark that we used the non-degeneracy 
in \eqref{eq:fvlambdaomega} and 
linearly independence of $f_u$ and $\omega$. 

If $p$ is a pure-frontal singular point of a frontal $f$, then 
as in Proposition \ref{prop:p-dir},
there exists
a curvature line frame generator, and
as in Corollary \ref{cor:frametarget},
there exists
a curvature line frame.

\subsection{Frenet equation}
Let $f\colon U\to\R^3$ be a frontal, 
$\nu$ its Gauss map.
We assume that $f$ at $p$ is a front or
$p$ is a singular point of pure-frontal.
We take a curvature line frame generator $\{\u_1,\u_2\}$,
and corresponding curvature line frame $\{\e_1,\e_2\}$.
Then the fundamental equations are 
\begin{equation}\label{eq:frenet}
\begin{pmatrix} \e_1 \\ \e_2 \\ \nu \end{pmatrix}_{\u_1}=
\begin{pmatrix} 0 & x_1 & x_2 \\ -x_1 & 0 & x_3 \\ -x_2 & -x_3 & 0 \end{pmatrix}
\begin{pmatrix} \e_1 \\ \e_2 \\ \nu \end{pmatrix},\quad 
\begin{pmatrix} \e_1 \\ \e_2 \\ \nu \end{pmatrix}_{\u_2}=
\begin{pmatrix} 0 & y_1 & y_2 \\ -y_1 & 0 & y_3 \\ -y_2 & -y_3 & 0 \end{pmatrix}
\begin{pmatrix} \e_1 \\ \e_2 \\ \nu \end{pmatrix}.
\end{equation}
Since $\nu_{\u_1}$ (\/resp. $\nu_{\u_2}$\/) 
is parallel to $\e_1$ (\/resp. $\e_2$\/), we have 
$x_3=0$ (\/resp. $y_2=0$\/). 
We call this equation as in \eqref{eq:frenet} the {\it Frenet equation}. 
We set 
$$X=\begin{pmatrix} 0 & x_1 & x_2 \\ -x_1 & 0 & 0 \\ -x_2 & 0 & 0 \end{pmatrix},\quad 
Y=\begin{pmatrix} 0 & y_1 & 0 \\ -y_1 & 0 & y_3 \\ 0 & -y_3 & 0 \end{pmatrix}.$$
The integrability condition for \eqref{eq:frenet} is 
$$X_{\u_2}+XY=Y_{\u_1}+YX.$$
By this condition, we have 
\begin{align}
\begin{aligned}\label{eq:conditions}
-x_2y_3+(x_1)_{\u_2}-(y_1)_{\u_1}&=0,\\
x_1y_3+(x_2)_{\u_2}&=0,\\
x_2y_1+(y_3)_{\u_1}&=0.
\end{aligned}
\end{align}
The first equation in \eqref{eq:conditions} is the {\it Gauss equation}, 
and the second and the third equations in \eqref{eq:conditions} are the {\it Codazzi equations}.
In \cite{framed}, invariants of surfaces with singularities using
general moving frame are studied.

%%%%%SECTION 6%%%%%
\section{Ribaucour transformation of frontals}\label{sec:rib}
In this section, as an application of the extension
of line of the curvature, we consider Ribaucour transformations
of frontals.
A Ribaucour transformation is a transformation of regular surfaces
which preserves the line of curvatures.
It has been attracting attention from the view point of
differential geometry of surfaces \cite{ribhyper,daytoj,masongud}.
The classical definition requires the curvature line
coordinate systems on surfaces.
Using our curvature line frame on frontal, 
it is natural to consider Ribaucour transformations instead of
curvature line coordinate systems.

\subsection{Definition of Ribaucour transformation} 
We give a definition of the Ribaucour transformation for frontals by using moving frame. 
\begin{definition}\label{def:Rib}
Let $f\colon U\to\R^3$ and $\tilde{f}\colon\wtil{U}\to\R^3$ be frontals, 
$\nu$ (resp. $\tilde \nu$) the Gauss map of $f$ (resp. $\tilde f$).
Let $\{\u_1,\u_2\}$ (resp. $\{\tilde\u_1,\tilde\u_2\}$) be 
a curvature line frame generator of $f$ (resp. $\tilde f$) on $U$. 
Then $\tilde{f}$ is a {\it Ribaucour transformation} of $f$ 
if there exist a $C^\infty$ function $h\colon U\to\R$ 
and diffeomorphism $\psi\colon U\to\wtil{U}$ such that for any $p\in U$, 
\begin{enumerate}
\item $f(p)+h(p)\nu(p)=\til{f}(\psi(p))+h(p)\til{\nu}(\psi(p))$,
%\item the set $\{q\in U\ |\ f(q)+h(q)\nu(q)\ \text{is regular}\}$ is dense,
\item $d\psi_p(\u_i)$ and $\tilde\u_i$ are linearly dependent
for $i=1,2$.
\end{enumerate}
We call a map $p\mapsto f(p)+h(p)\nu(p)$ the {\it center map}. 
\end{definition}
A different approach to singularities of Ribaucour transformations
of a regular surface is given in \cite{ogata}.

\subsection{Equations of Ribaucour transformation}
Following arguments in \cite{ribhyper},
we consider equations which give the Ribaucour transformation of a frontal
when the set $\{q\in U\ |\ f(q)+h(q)\nu(q)\ \text{is regular}\}$ is dense.
Let $f\colon U\to\R^3$ and $\til{f}\colon \wtil{U}\to\R^3$ be fronts or frontals
with only pure-frontal singular points. 
Take a curvature line coordinate $(u,v)$ on $U$ and a curvature line frame $\{\e_1,\e_2,\nu\}$ along $f$. 
Then there exists functions $k_1,k_2,l_1,l_2\colon U\to\R$ such that 
\begin{equation}\label{eq:Rib1}
f_u=k_1\e_1,\quad f_v=k_2\e_2,\quad \nu_u=l_1\e_1,\quad \nu_v=l_2\e_2.
\end{equation}
On the other hand,  since $\{\e_1,\e_2,\nu\}$ gives an orthonormal basis of $\R^3$,
\begin{equation}\label{eq:Rib2}
\til{\nu}=b_1\e_1+b_2\e_2+b_3\nu\quad (b_1^2+b_2^2+b_3^2=1)
\end{equation}
holds for some $b_1,b_2,b_3$,
where $\til{\nu}$ is the Gauss map of $\til{f}$. 
If $\til{f}$ is the Ribaucour transformation, then $f(p)+h(p)\nu(p)=\til{f}(\psi(p))+h(p)\til{\nu}(\psi(p))$ for any $p\in U$, 
where $h$ and $\psi$ are as in Definition \ref{def:Rib}. 
Thus  by \eqref{eq:Rib1} and \eqref{eq:Rib2}, we have 
\begin{align*}
\begin{aligned}\label{eq:tilde_u}
\til{f}_u&=\til{f}_x x_u+\til{f}_y y_u
=f_u+h_u\nu+h\nu_u-h_u\til{\nu}-h\til{\nu}_u\\
&=(k_1+hl_1)\e_1+h_u\nu-h_u\til{\nu}-h\til{\nu}_u,
\end{aligned}
\end{align*}
where we set $\psi(u,v)=(x(u,v),y(u,v))$ and $\til{\nu}_u$ means $\til{\nu}_u=\til{\nu}_x x_u+\til{\nu}_y y_u$. 
By a direct calculation, we see that 
\begin{equation}\label{eq:inner1}
\inner{\til{f}_u}{\til{\nu}}=(k_1+hl_1)b_1+h_u(b_3-1)=0
\end{equation}
holds by \eqref{eq:Rib2}. 
Similarly, we have 
\begin{align*}
\begin{aligned}\label{eq:tilde_v}
\til{f}_v&=\til{f}_x x_v+\til{f}_y y_v
=f_v+h_v\nu+h\nu_v-h_v\til{\nu}-h\til{\nu}_v\\
&=(k_2+hl_2)\e_2+h_v\nu-h_v\til{\nu}-h\til{\nu}_v
\end{aligned}
\end{align*}
by \eqref{eq:Rib1}, where $\til{\nu}_v=\til{\nu}_x x_v+\til{\nu}_y y_v$. 
Therefore it follows that 
\begin{equation}\label{eq:inner2}
\inner{\til{f}_v}{\til{\nu}}=(k_2+hl_2)b_2+h_v(b_3-1)=0
\end{equation}
by \eqref{eq:Rib2}. 
If $b_3-1=0$, then we have $\nu=\til{\nu}$ by \eqref{eq:Rib2}. 
This implies that $f=\til{f}$, and hence this contradicts 
that $\til{f}$ is a Ribaucour transformation of $f$. 
Hence
\begin{equation}\label{eq:huhv}
h_u=m_1(k_1+hl_1),\quad h_v=m_2(k_2+hl_2),\quad
m_i=-\dfrac{b_i}{b_3-1}\quad(i=1,2)
\end{equation}
hold on $U$ by \eqref{eq:inner1} and \eqref{eq:inner2}.
We have the following:
\begin{proposition}
Let $f\colon U\to\R^3$ and $\tilde{f}\colon\wtil{U}\to\R^3$ be frontals, 
and let $\tilde f$ is a Ribaucour transformation of $f$.
Under the notation above,
if the set of regular points of the center map $c_f$ is dense,
then 
\begin{equation}\label{eq:Ribaucour}
\tilde\nu_v\cdot\e_1=\dfrac{b_1}{b_3-1} \tilde\nu_v\cdot\nu,\quad 
\tilde\nu_u\cdot\e_2=\dfrac{b_2}{b_3-1} \tilde\nu_u\cdot\nu
\end{equation}
hold on $U$.
\end{proposition}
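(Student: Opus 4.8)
The plan is to compare the $\til\nu$-derivatives of the center map $c_f$ computed from its two descriptions: $c_f=f+h\nu$ read in the chart of $f$, and $c_f=\til f\circ\psi+h\,(\til\nu\circ\psi)$ coming from Definition~\ref{def:Rib}\,(1). I will write $N=\til\nu\circ\psi$, so that in the notation above $N_u=\til\nu_u$, $N_v=\til\nu_v$, and $\inner{N}{N}\equiv1$ gives $\inner{N}{N_u}=\inner{N}{N_v}=0$. Everything will first be established on the set $R$ of points $p\in U$ where $c_f$ is an immersion, $p\notin S(f)$ and $\psi(p)\notin S(\til f)$; this $R$ is dense in $U$, because the regular points of $c_f$ are dense by hypothesis while $S(f)$ and $S(\til f)$ are nowhere dense (the singular points being non-degenerate).

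First I would record what the Ribaucour condition gives on $R$. Since $(u,v)$ is a curvature line coordinate, $\partial_u\parallel\u_1$ and $\partial_v\parallel\u_2$ on $R$, hence by Definition~\ref{def:Rib}\,(2), $d\psi(\partial_u)\parallel\til\u_1$ and $d\psi(\partial_v)\parallel\til\u_2$; consequently $(\til f\circ\psi)_u\parallel\til f_{\til\u_1}$, $(\til f\circ\psi)_v\parallel\til f_{\til\u_2}$, $N_u\parallel\til\nu_{\til\u_1}$ and $N_v\parallel\til\nu_{\til\u_2}$. Because $\psi(p)\notin S(\til f)$, the two defining conditions of a curvature line frame generator of $\til f$ (Definition~\ref{def:curvline-frame}\,(1),(2)) give $\inner{\til f_{\til\u_1}}{\til f_{\til\u_2}}=0$ and $\til\nu_{\til\u_i}\parallel\til f_{\til\u_i}$. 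Combining these on $R$ yields
$$
\inner{N_u}{N_v}=0,\qquad
\inner{(\til f\circ\psi)_u}{N_v}=\inner{(\til f\circ\psi)_v}{N_u}=0.
$$

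Next I would differentiate the two descriptions of $c_f$ and read off \eqref{eq:Ribaucour}. From $c_f=\til f\circ\psi+hN$ one gets $(c_f)_u=(\til f\circ\psi)_u+h_uN+hN_u$ and $(c_f)_v=(\til f\circ\psi)_v+h_vN+hN_v$; combined with the display above and $\inner{N}{N_u}=\inner{N}{N_v}=0$, this forces $\inner{(c_f)_u}{N_v}=0$ and $\inner{(c_f)_v}{N_u}=0$ on $R$. From $c_f=f+h\nu$, on the other hand, \eqref{eq:Rib1} gives $(c_f)_u=(k_1+hl_1)\e_1+h_u\nu$ and $(c_f)_v=(k_2+hl_2)\e_2+h_v\nu$, so \eqref{eq:huhv} turns these into $(c_f)_u=(k_1+hl_1)(\e_1+m_1\nu)$ and $(c_f)_v=(k_2+hl_2)(\e_2+m_2\nu)$ with $m_i=-b_i/(b_3-1)$. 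Since $\inner{\e_i}{\nu}=0$ we have $\e_i+m_i\nu\neq0$, and as $c_f$ is an immersion on $R$ we must have $k_1+hl_1\neq0\neq k_2+hl_2$ there; dividing, $\inner{\e_1+m_1\nu}{N_v}=0$ and $\inner{\e_2+m_2\nu}{N_u}=0$ on $R$. Substituting $m_i=-b_i/(b_3-1)$ and recalling $N_u=\til\nu_u$, $N_v=\til\nu_v$, this is exactly \eqref{eq:Ribaucour} on $R$. Since every function occurring in \eqref{eq:Ribaucour} is smooth on $U$ ($\e_1,\e_2,\nu$ by the construction of the curvature line frame, $\til\nu_u,\til\nu_v$ because $\til\nu\circ\psi$ is $C^\infty$, and $b_3-1$ nowhere zero), the identities extend from the dense set $R$ to all of $U$ by continuity.

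The hard part will be the bookkeeping near the bad locus: the defining conditions of a curvature line frame generator are only transparent where the underlying map is a regular surface, so the geometric argument of the second paragraph must be run on the dense set $R$ where $c_f$ is an immersion \emph{and} $\psi(p)$ is a regular point of $\til f$, and the value of \eqref{eq:Ribaucour} on $S(f)$, on $\psi^{-1}(S(\til f))$, and at the singular points of $c_f$ must be recovered by the final continuity step; it is precisely the density of the regular set of the center map that legitimizes this.
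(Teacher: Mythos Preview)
Your argument is correct and follows essentially the same route as the paper's proof: both compute the partial derivatives of the center map from its two descriptions, use the curvature-line orthogonality relations $\inner{(\til f\circ\psi)_u}{\til\nu_v}=\inner{(\til f\circ\psi)_v}{\til\nu_u}=0$, and divide by the factors $k_i+hl_i$ on the dense regular set of $c_f$ before extending by continuity. The only cosmetic difference is that the paper introduces coordinates $(x,y)$ on $\wtil U$ and appeals to $x_v=y_u=0$ to obtain those orthogonality relations, whereas you derive them directly from the curvature line frame generator axioms without fixing coordinates on $\wtil U$; your formulation is arguably a bit cleaner, but the substance is the same.
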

These equations are called {\it Ribaucour equations}. 
\begin{proof}
We see that 
\begin{align*}
(f+h\nu)_u&=(k_1+hl_1)\e_1+h_u\nu=(k_1+hl_1)(\e_1+m_1\nu),\\
(f+h\nu)_v&=(k_2+hl_2)\e_2+h_v\nu=(k_2+hl_2)(\e_2+m_2\nu)
\end{align*}
by \eqref{eq:huhv}. 
We note that $\e_i+m_i\nu\neq0$ ($i=1,2$) holds since 
$\e_i$ and $\nu$ are linearly independent. 
Thus the set of singular points $S(c_f)$ of the center map 
$c_f\colon U\to\R^3$ of $f$ defined by $c_f(u,v)=f(u,v)+h(u,v)\nu(u,v)$ 
is $S(c_f)=S_1\cup S_2$, where 
$$S_i=\{(u,v)\in U\ |\ k_i(u,v)+h(u,v)l_i(u,v)=0\}\quad (i=1,2).$$
Since $\psi(u,v)=(x(u,v),y(u,v))$ is also a curvature line coordinate 
system of $\til{f}$ on $U$, it holds that 
$x_v(u,v)=y_u(u,v)=0$, in particular,
$\inner{\til{f}_u}{\til{\nu}_v}=\inner{\til{f}_v}{\til{\nu}_u}=0.$
We now set 
\begin{equation}\label{eq:dtilnu}
\til{\nu}_u=L_1^1\e_1+L_1^2\e_2+L^3_1\nu,\quad \til{\nu}_v=L_2^1\e_1+L_2^2\e_2+L^3_2\nu.
\end{equation}
%Therefore we have 
%\begin{align*}
%L_1^1&=(b_1)_u+b_2\inner{(\e_2)_u}{\e_1}+b_3l_1,\\
%L_1^2&=b_1\inner{(\e_1)_u}{\e_2}+(b_2)_u,\\
%L_1^3&=b_1\inner{(\e_1)_u}{\nu}+(b_3)_u,\\
%L_2^1&=(b_1)_v+b_2\inner{(\e_2)_v}{\e_1},\\
%L_2^2&=b_1\inner{(\e_1)_v}{\e_2}+(b_2)_v+b_3l_2,\\
%L_2^3&=b_2\inner{(\e_2)_v}{\nu}+(b_3)_v.
%\end{align*}
By direct calculations using \eqref{eq:Rib2} and \eqref{eq:huhv},
\begin{align*}
\til{f}_u&=(f+h\nu)_u-h_u\tilde{\nu}-h\tilde{\nu}_u
=(k_1+hl_1)(\e_1+m_1\nu)-m_1(k_1+hl_1)\til{\nu}-h\til{\nu}_u\\
&=(k_1+hl_1)(\e_1+m_1(\nu-\til{\nu}))-h\til{\nu}_u,\\
\til{f}_v&=(f+h\nu)_v-h_v\til{\nu}-h\til{\nu}_v
=(k_2+hl_2)(\e_2+m_2\nu)-m_2(k_2+hl_2)\til{\nu}-h\til{\nu}_v\\
&=(k_2+hl_2)(\e_2+m_2(\nu-\til{\nu}))-h\til{\nu}_v
\end{align*}
holds.
Since $\inner{\til{\nu}}{\til{\nu}_u}
=\inner{\til{\nu}}{\til{\nu}_v}
=\inner{\til{\nu}_u}{\til{\nu}_v}
=\inner{\til{f}_u}{\til{\nu}_v}
=\inner{\til{f}_v}{\til{\nu}_u}=0$, 
we have 
$$
\inner{\til{f}_u}{\til{\nu}_v}=(k_1+hl_1)(L_2^1+m_1L_2^3)=0,\quad 
\inner{\til{f}_v}{\til{\nu}_u}=(k_2+hl_2)(L_1^2+m_2L_1^3)=0.
$$
Since $R(c_f)$ is dense, 
$L_2^1+m_1L_2^3=0$ (resp. $L_1^2+m_2L_1^3=0$)
holds and this shows the assertion.
\end{proof}
%\begin{align*}
%\inner{\til{\nu}_u}{\til{\nu}_v}&=L_1^1L_2^1+L_1^2L_2^2+L_1^3L_2^3=0.\\
%\inner{\e_1}{\til{\nu}_v}&=L_2^1,\quad 
%\inner{\nu-\til{\nu}}{\til{\nu}_v}=L_2^3-b_1L_2^1-b_2L_2^2-b_3L_2^3,
%\end{align*}
\subsection{Example}
Here we give an example of Ribaucour transformation
in our sense.
Let $\gamma:(x(u),y(u))$ $(y>0)$ be a planar curve in $I\subset\R$
satisfying
that there exist functions $l(u)$ (possibly taking zero) 
and $\theta(u)$ such that
$$
\gamma'(u)=l(u)(\cos\theta(u),\sin\theta(u)).
$$
This condition is equivalent to that the curve is a frontal.
We consider the surface of revolution of $\gamma$
with respect to the $z$-axis
by
\begin{equation}\label{eq:revol}
T_z(\gamma)=\big(x(u),y(u)\cos v,y(u)\sin v\big).
\end{equation}
We set $T_z(\gamma)(u,v)=s(u,v)$.
The curve $\gamma$ is called the 
{\it profile curve\/} or the {\it generating curve\/}
of $s$.
It is well known that if $l\ne0$, then 
$(u,v)$ is a curvature line coordinate system of $s$.
One can easily see that
$(\partial_u,\partial_v)$ is a curvature line frame generator.

Let us fix a function $\rho(u)$ satisfying
$$
k(u)=\dfrac{\rho'(u)}{l(u)}
$$
is bounded for any $u$, and consider a function
$F:I\times \R^2\to\R$ by
$$
F(u,x,y)=|(x,y)-\gamma(u)|^2-\rho(u)^2.
$$
We consider the envelope of the family $\{f_u^{-1}(0)\}_{u\in I}\subset\R^2$
of this function, where $f_u(x,y)=F(u,x,y)$.
We set $\e=(\cos\theta(u),\sin\theta(u))$ and $\n=(-\sin\theta(u),\cos\theta(u))$
and $X=\alpha \e+\beta \n$, $\gamma=r_{1}\e+r_{2}\n$.
Then since
\begin{equation}\label{ftsiki}
\dfrac{\partial }{\partial u}F
%&=
%-2X \cdot \gamma'+2\gamma \cdot \gamma' - 2\rho \cdot \rho'\nonumber\\
%&=2\Big(-X \cdot \gamma'+\gamma \cdot \gamma' -\rho \cdot \rho'\Big)\nonumber\\
=2\Big((\gamma-X) \cdot \gamma'-\rho \rho'\Big),
\end{equation}
we have
$\big((\alpha \e+\beta \n)-(r_1\e+r_2\n)\big)\cdot \gamma'-\rho\rho'
%=&\{(r_{1}-\alpha)\bm{e}+(r_{2}-\beta)\bm{n}\} \cdot l \bm{e}-\rho \rho'\\
%=&l (r_{1}-\alpha)-\rho \rho'\\
=l \alpha-l r_{1}-\rho \rho'$, 
where $\bm{x}\cdot\bm{y}$ means the canonical inner product of $\bm{x},\bm{y}\in\R^2$.
Thus $\partial F/\partial u=0$ if and only if $\alpha=r_1-\rho k$.
Moreover, by 
$$
X-\gamma = (\alpha-r_{1})\bm{e}+(\beta-r_{2})\bm{n}
%&=&-\frac{\rho \rho'}{l}\bm{e}+(\beta-r_{2})\bm{n}\\
=-\rho k \bm{e}+(\beta-r_{2})\bm{n},
$$
we have
\begin{equation}
(\beta-r_{2})^2 = \rho^2-\rho^2k^2 \label{bsiki}.
\end{equation}
Thus if $|k|\ne 1$, then
$
\beta = \pm\rho \sqrt{|1-k^2|}+r_{2}
%\Big(&=& \pm\rho \sqrt{1-\frac{\rho'^2}{l^2}}+r_{2} \Big)
$.
Hence we have
\begin{equation}\label{xk05}
X = X_{\pm}
%&=& \Big(r_{1}-\frac{\rho \rho'}{l}\Big)\bm{e}
%+\Big(r_{2}\pm\rho \sqrt{\Big|1-\frac{\rho'^2}{l^2}\Big|}\Big)\bm{n}\nonumber\\
%&=& \gamma+\Big(-\frac{\rho \rho'}{l} \bm{e} 
%\pm\rho \sqrt{\Big|1-\frac{\rho'^2}{l^2}\Big|}\bm{n}\Big)\nonumber\\
= \gamma-\rho k \bm{e} \pm\rho \sqrt{|1-k^2|}\bm{n}.
\end{equation}
If $|k(u)|=1$ for any $u$, then 
\begin{equation}\label{eq:xk1}
X = X_{\pm}
%&=& (r_{1}\mp\rho)\bm{e}+r_{2}\bm{n}\nonumber\\
= \gamma \mp \rho \bm{e}.
\end{equation}
Summarizing up the above, we have the following proposition.
We set the surfaces of revolutions $T_z(X_\pm)$.
\begin{proposition}
If $|k(u)|\ne1$ or $|k(u)|=1$ for any $u$,
The surface of revolution $T_z(X_+)$ 
is a Ribaucour transformation of the surface of revolution $T_z(X_-)$
given in
\eqref{xk05} when $|k(u)|\ne1$ or \eqref{eq:xk1} when $|k(u)|=1$.
Moreover, $T_z(\gamma)$ is the center map of these Ribaucour transformations.
\end{proposition}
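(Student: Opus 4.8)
The plan is to verify directly the two conditions of Definition \ref{def:Rib}, taking $f=T_z(X_-)$ and $\til f=T_z(X_+)$ parametrized over the common domain $I\times\R$, the diffeomorphism $\psi$ equal to the identity, and the function $h(u,v)=\rho(u)$; the center map will then turn out to be $T_z(\gamma)$.

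First I would show that each profile curve $X_\pm$ is a frontal whose planar unit normal points along $\gamma-X_\pm$. By \eqref{xk05}, $\gamma-X_\pm=\rho\,\bigl(k\,\e\mp\sqrt{|1-k^2|}\,\n\bigr)$, while by \eqref{eq:xk1}, $\gamma-X_\pm=\pm\rho\,\e$ in the case $|k|\equiv1$; in either case $|\gamma-X_\pm|=|\rho|$, and the unit vector $N_\pm:=k\,\e\mp\sqrt{|1-k^2|}\,\n$ (resp.\ $N_\pm:=\pm\e$) satisfies $\rho\,N_\pm=\gamma-X_\pm$. Using $\gamma'=l\,\e$, $\e'=\theta'\,\n$, $\n'=-\theta'\,\e$ and $k=\rho'/l$, a short computation gives the envelope identity $\inner{X_\pm'}{\gamma-X_\pm}=0$, so $N_\pm$ is a unit normal field along $X_\pm$; it is of class $C^\infty$ precisely because the hypothesis makes $\sqrt{|1-k^2|}$ (or, in the other case, $\e$) smooth, and this is the only point at which the dichotomy ``$|k|\neq1$ everywhere'' versus ``$|k|\equiv1$'' is used. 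Revolving $N_\pm$ about the $z$-axis then produces a smooth unit normal along $T_z(X_\pm)$, so the $T_z(X_\pm)$ are frontals, and I would fix their Gauss maps to be $\nu=T_z(N_-)$ and $\til\nu=T_z(N_+)$.

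Next I would invoke the observation, already made for $T_z(\gamma)$, that for the surface of revolution of any planar frontal the pair $(\partial_u,\partial_v)$ is a curvature line frame generator: the meridians and parallels are orthogonal, $\nu_v$ is always a multiple of $\partial_v$, and $\nu_u$ is a multiple of $\partial_u$ because the derivative of the profile curve and the derivative of its planar unit normal are both orthogonal to that unit normal. Applied to $X_\pm$, this gives $\{\partial_u,\partial_v\}$ as a curvature line frame generator of both $f$ and $\til f$, so condition (2) of Definition \ref{def:Rib} holds trivially, since $\psi$ is the identity. For condition (1), revolving the planar identities $\rho\,N_-=\gamma-X_-$ and $\rho\,N_+=\gamma-X_+$ yields $h\,\nu=T_z(\gamma)-T_z(X_-)=T_z(\gamma)-f$ and $h\,\til\nu=T_z(\gamma)-T_z(X_+)=T_z(\gamma)-\til f$ on all of $I\times\R$; hence $f+h\,\nu=T_z(\gamma)=\til f+h\,\til\nu$, which is exactly condition (1), and this simultaneously identifies the center map $p\mapsto f(p)+h(p)\nu(p)$ with $T_z(\gamma)$.

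The main obstacle is the second step, namely checking that $X_\pm$ is a frontal with a genuinely $C^\infty$ unit normal; this is precisely where the hypothesis on $|k|$ enters, and it explains why the statement splits into the two cases. Once the envelope identity $\inner{X_\pm'}{\gamma-X_\pm}=0$ and the smoothness of $N_\pm$ are in hand, the remaining verifications are the routine revolution bookkeeping above, and the case $|k|\equiv1$ is handled word for word with \eqref{eq:xk1} in place of \eqref{xk05}.
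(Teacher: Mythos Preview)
Your proposal is correct and is essentially the explicit version of the paper's argument. The paper does not give a separate proof of the proposition; it writes ``Summarizing up the above, we have the following proposition,'' so the intended justification is the envelope construction preceding the statement. That construction already yields $|X_\pm-\gamma|=|\rho|$ (from $F=0$) and, after differentiating $F=0$ and using $\partial F/\partial u=0$, the tangency $\inner{X_\pm'}{\gamma-X_\pm}=0$; together with the remark that $(\partial_u,\partial_v)$ is a curvature line frame generator for any surface of revolution of a planar frontal, these are exactly the ingredients you assemble to verify Definition~\ref{def:Rib} with $\psi=\mathrm{id}$ and $h=\rho$. Your observation that the hypothesis ``$|k|\ne1$ everywhere'' or ``$|k|\equiv1$'' is needed precisely to make $N_\pm$ (hence $X_\pm$ and its revolved Gauss map) of class $C^\infty$ is the right reading of why the statement splits into the two cases.
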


\begin{acknowledgements}
The authors thank Joseph Cho, Mason Pember and Gudrun Szewieczek
for fruitful advices, and Maho Ichikawa for helping calculations about
Ribaucour transformations of surfaces of revolution. 
\end{acknowledgements}

%\section{thebibliography}

\end{document}